\newcommand{\mbN}{\mathbb N}
\newcommand{\mbP}{\mathbb P}
\newcommand{\mbE}{\mathbb E}
\newcommand{\N}{\mathbb{N}}
\newcommand{\R}{\mathbb{R}}
\newcommand{\V}{\text{Var}}
\newcommand{\T}{\boldsymbol{\theta}}
\newcommand{\supp}{\mathrm{supp}}
\newcommand{\eps}{\varepsilon}
\newcommand{\cuad}{\begin{flushright}\vspace{-2ex}$\Box$\vspace{-2ex}\end{flushright}}
\newtheorem{thm}{Theorem}
\newtheorem{prop}{Proposition}
\newtheorem{lem}{Lemma}
\newtheorem{ass}{Assumption}
\newtheorem{remark}{Remark}
\newcommand{\vc}[1]{\boldsymbol{#1}}
\newcommand{\ind}[1]{\mathds{1}_{\{#1\}}}
\newcommand{\red}[1]{{\color{black}#1}}
\newcommand{\blue}[1]{{\color{black}#1}}
\newcommand*\samethanks[1][\value{footnote}]{\footnotemark[#1]}
\begin{document}
\pagenumbering{arabic}

%%%%%%%%%%%%%%%%%%%%%%%%%%%%%%%%%%%%%%%%%%%%%%%%%%%%%%%%%%%%%%%%%%%%%
\title{Consistent least squares estimation in population-size-dependent branching processes}

\author{Peter Braunsteins\thanks{All authors contributed equally to this work.} \thanks{School of Mathematics and Statistics, UNSW Sydney, NSW, 2052. email: \url{p.braunsteins@unsw.edu.au }, ORCID: 0000-0003-1864-0703}, \ Sophie Hautphenne\samethanks[1] \thanks{School of Mathematics and Statistics, The University of Melbourne, 3010 Parkville, VIC, Australia. email: \url{sophiemh@unimelb.edu.au }, ORCID: 0000-0002-8361-1901}, \
Carmen Minuesa\samethanks[1] \thanks{Departamento de Matem\'aticas, Instituto de Computaci\'on Cient\'ifica Avanzada de la Universidad de Extremadura (ICCAEx), Facultad de Ciencias, Universidad de Extremadura  (ROR: \url{https://ror.org/0174shg90}), 06006 Badajoz, Spain. email: \url{cminuesaa@unex.es}, ORCID: 0000-0002-8858-3145.}} 

\date{}

\maketitle

%%%%%%%%%%%%%%%%%%%%%%%%%%% ABSTRACT %%%%%%%%%%%%%%%%%%%%%%%%%%%%%%%%
\begin{abstract}
We derive the first conditionally consistent estimators for a class of parametric Markov population models with logistic growth, which are suitable for modelling endangered populations in restricted habitats with a carrying capacity. We focus on discrete-time parametric population-size-dependent branching processes, for which we propose a new class of weighted least-squares estimators based on a single trajectory of population size counts. We establish the consistency and asymptotic normality of our estimators, conditional on non-extinction up to time $n$, as $n\to\infty$.  Since Markov population models with a carrying capacity become extinct almost surely under general conditions, our proofs rely on arguments distinct from those in the existing literature.

Our results are motivated by conservation biology, where endangered populations are often studied precisely because they are still alive, leading to an observation bias. Through simulated examples, we show that our conditionally consistent estimators generally reduce this bias
for key quantities such as a habitat's carrying capacity. We apply our methodology to estimate the carrying capacity of the Chatham Island black robin, a population reduced to a single breeding female in the 1970's, which has since recovered but has yet to reach the island's carrying capacity.
\end{abstract}

\noindent {\bf Keywords: }{branching process; population-size-dependence; almost sure extinction; inference; carrying capacity; least-squares estimation; consistency.}

\noindent {\bf MSC 2020: }{60J80.}

\vspace{1cm}

%%%%%%%%%%%%%%%%%%%%%%%%%%%%%%%%%%%%%%%%%%%%%%%%%%%%%%%%%%%%%%%%%%%%%

%%%%%%%%%%%%%%%%%%%%%%%%%%%%%%%%%%%%%%%%%%%%%%%%%%%%%%%%%%%%%%%%%%%%%%%%%%%%%%%%%%%%%%%%%%%%%%%%%%%%%%%
\section{Introduction}
\label{sec:intro}

%%%%%%%%%%%%%%%%%%%%%%%%%%%%%%%%%%%%%%%%%%%%%%%%%%%%%%%%%%%%%%%%%%%%%%%%%%%%%%%%%%%%%%%%%%%%%%%%%%%%%%%

Biological populations generally display \emph{logistic growth}:
after an initial period of exponential growth, competition for limited resources such as food, habitat, and breeding opportunities, causes population growth to slow down until the population eventually reaches a \emph{carrying capacity}, which is the maximum population size that a habitat can support.
Biologists often aim to estimate the carrying capacity of the habitat for the population they are studying.
One example is the \textit{Chatham Island black robin population} (New Zealand), which
was saved from the brink of extinction in the early 1980s when it was considered the world's most endangered bird species \cite{elliston1994black}.
The population has since recovered from a single breeding female to over 120 females, and appears to be nearing carrying capacity of its habitat (Rangatira Island)
\cite{massaro2018post,massaro2013nest}.
%Instead of estimating the carrying capacity directly from population size counts,
In order to estimate the carrying capacity, ecologists often rely on assessing environmental characteristics of the habitat itself.
For example, in \cite{massaro2018post} the carrying capacity of the black robin population is estimated to be 170 nesting pairs based on a \emph{habitat suitability model} which requires  precise knowledge of the habitat and is very costly.
Here we take a different statistical approach to estimate the carrying capacity based solely on annual population size counts using  stochastic population models.

In this paper we model biological populations with discrete-time \textit{population-size-dependent branching processes} (PSDBPs), which  can capture fundamental properties of many populations (including logistic growth). PSDBPs are Markov chains $\{Z_n\}_{n\geq 0}$
%where $Z_n$ represents
%the population size at time $n$, and
%which are
characterised by the recursion
\begin{equation}\label{def:process}
Z_{n+1} =
\sum^{Z_{n}}_{i=1} \xi_{n,i}(Z_{n}), \qquad n\in\N_0:=\{0,1,2,\dots\},
\end{equation}
where $Z_n$ represents
the population size at time $n$ and $\{\xi_{n,i}(\cdot)\}_{n,i}$ are independent random variables that represent each individual's offspring whose distribution depends only on the current population size $Z_n$.
%(see Section \ref{} for a formal definition).
In these processes, state 0 is absorbing, and represents \textit{extinction} of the population.
Let $m(z):=\mathbb E[\xi(z)]$ denote the mean offspring at population size $z$; we say that a PSDBP has a carrying capacity  $K$ if  \begin{equation}\label{defK}m(z)>1\quad \text{when $z<K$}\quad \text{and}\quad m(z)<1 \quad \text{when $z>K$}.\end{equation} When such a threshold value $K$ exists, the PSDBP becomes extinct with probability one; this is common to almost all stochastic population models with a carrying capacity \cite{jagers2020populations}. We point out that, while closely related, the mathematical definition of the carrying capacity in \eqref{defK} is not the same as the biological definition given above.

The fact that PSDBPs with a carrying capacity become extinct with probability one introduces several technical challenges. For example, if we consider a parametric PSDBP, to establish
asymptotic properties of an estimator $\widehat{\vc \theta}_n$ for a parameter $\vc\theta$ based on a single trajectory, we need to condition on non-extinction up to a finite time $n$ ---an event with vanishing probability as $n\to\infty$. Consequently, unlike branching models with positive chance of survival, many natural estimators (such as maximum likelihood estimators) do not satisfy the classical consistency property called \textit{$C$-consistency}, defined as follows: an estimator $\widehat{\vc \theta}_n:=\widehat{\vc \theta}_n(Z_0,\ldots,Z_n)$ is called $C$-consistent for a parameter $\vc\theta$ if,
\begin{equation}\label{C-cons}\textrm{for any $\varepsilon > 0$,}\qquad
\lim_{n \to \infty} \mathbb{P}( |\widehat{\vc \theta}_n - \vc\theta | > \varepsilon \, | \, Z_n >0)=0
\end{equation}
(see for instance \cite{Pakes-1975}).
 In \cite[Theorem 1]{braunsteins2022parameter} the authors show that the maximum likelihood estimators (MLEs) $\hat{m}_n(z)$ (defined in \eqref{MLEmz}) for $m(z)$ in a PSDBP  are not $C$-consistent but instead are such that,
\begin{equation}\label{Qcons}
\textrm{for any $\varepsilon > 0$,}\qquad\lim_{n \to \infty} \mathbb{P}( |\hat{m}_n(z) -m^\uparrow(z) | > \varepsilon \, | \, Z_n >0)=0,\end{equation}where $m^\uparrow(z)$ is different from $m(z)$, and can be interpreted as the mean number of offspring
born to individuals when the current population size is $z$ in the \emph{$Q$-process} associated with $\{Z_n\}$, which corresponds to $\{Z_n\}$ conditioned on non-extinction in the
distant future (see Section \ref{sec:Q-process}). In \cite{braunsteins2022parameter} the authors refer to \eqref{Qcons} as \emph{$Q$-consistency}.

A major gap remains to be filled in the context of PSDBPs with a carrying capacity: \emph{there does not exist any $C$-consistent estimator for $m(z)$ or for parameters (such as the carrying capacity) in parametric models}. We point out that $C$-consistent estimators are lacking not only for PSDBPs but for any stochastic population process with a carrying capacity and an absorbing state at 0 (for example, the diffusion models in \cite{lande2003stochastic}, the continuous-time birth-death processes in \cite{hautphenne2021birth}, and the controlled branching processes in \cite{gonzalez2021model}).

In this paper, we derive the first \emph{$C$-consistent} estimators $\widehat{\vc \theta}_n$ for any set of parameters $\vc \theta$ of \emph{parametric} PSDBPs (under some regularity conditions). In particular, this leads to the first $C$-consistent estimators for the carrying capacity $K$ \blue{(provided $K$ is a model parameter, or a function of other parameters).}
 The key idea behind our $C$-consistent estimators is to embed the $Q$-consistent MLEs $\hat{m}_n(z)$ and their conditional limit ${m}^{\uparrow}(z)$  in the objective function of a weighted least squares estimator,
\begin{equation}\label{Asss2} 
\widehat{\vc \theta}_n:=\arg\min_{\T\in\Theta} \sum_{z\geq 1} \hat{w}_n(z) \left\{\hat{m}_n(z)-{m}^{\uparrow}(z, \T)\right\}^2,
\end{equation}
with weights $ \hat{w}_n(z):= \hat{w}_n(z;Z_0,\dots, Z_n)$.
As detailed in Section \ref{sec:wlse}, for an appropriate choice of weights, this estimator
is the same as
\begin{equation}\label{cwlse_m}
\widehat{\T}_n^*:=\arg\min_{\T \in \Theta} \sum_{k =1}^{n} w_k \left\{Z_k - Z_{k-1}\,m^\uparrow(Z_{k-1}, \T)\right\}^2,
\end{equation}
which is a modification of the classical least squares estimators for stochastic processes, where the sum is taken over the successive times rather than the population sizes  (see for instance \cite{klimko1978conditional}).
%
%differs from classical least squares estimators for stochastic processes (see for instance \cite{klimko1978conditional}) where the sum is taken over the successive times $i=1,\dots,n$ rather than the population sizes $z\geq 1$.
To establish $C$-consistency and asymptotic normality of $\widehat{\vc \theta}_n$, we build on the asymptotic properties of $\hat{m}_n(z)$ and overcome several new technical challenges which arise due to the almost sure extinction of the process and the fact that $ \widehat{\vc \theta}_n$ includes an infinite sum.

Besides being of theoretical value, the $C$-consistent estimators $\widehat{\vc \theta}_n$ are also practically relevant. Indeed, many populations, such as the Chatham Island black robins, are studied precisely because they are still alive and conservation measures may need to be taken to preserve them. In these situations, the data should be viewed as being generated under the condition $Z_n>0$. This suggests that for many populations (particularly endangered populations), $C$-consistent estimators may be desirable.
We demonstrate the quality of our $C$-consistent estimators on simulated data in two contexts: \emph{(i)} on (quasi-)stable populations that are fluctuating around the carrying capacity, and \emph{(ii)} on growing populations that are yet to reach carrying capacity. In particular, we compare our  $C$-consistent estimators $\widehat{\vc \theta}_n$ with the
classical least squares estimators (which are not $C$-consistent). We find that when the data include small population sizes, where the population is at high risk of extinction (such as for the black robins), our $C$-consistent estimators $\widehat{\vc \theta}_n$ have a smaller bias than classical estimators. However when the data do not include these small population sizes, our estimators and classical estimators give similar estimates.
Finally,
we apply our methods to estimate the carrying capacity of the black robin population on Rangatira Island under several PSDBP models.

The paper is organised as follows. In the next section, we provide some background on PSDBPs and their associated $Q$-process, and we introduce a flexible class of parametric PSDBPs which will serve as an illustrative example. In Section~\ref{sec:wlse} we introduce our weighted least squares estimators $\widehat{\vc \theta}_n$ and in Section~\ref{sec:as_prop} we state their asymptotic properties. In Section~\ref{empirical}, we compare our least squares estimators $\widehat{\vc \theta}_n$ with their counterpart obtained by replacing ${m}^{\uparrow}(z, \T)$ with ${m}(z, \T)$ in \eqref{Asss2}, using simulated numerical examples. In Section~\ref{sec:br} we estimate the carrying capacity of the black robin population. 
\blue{We conclude the paper in Section~\ref{disc}, where we discuss the choice of weights, the situations in which the $C$-consistent estimator may be preferable, and potential extensions of the methodology.}
\red{The proofs are gathered in Section \ref{sec:proofs} and Appendix~\ref{appA}, \blue{and additional simulation results} are presented in Appendix~\ref{appB}.}

%%%%%%%%%%%%%%%%%%%%%%%%%%%%%%%%%%%%%%%%%%%%%%%%%%%%%%%%%%%%%%%%%%%%%%%%%%%%%%%%%%%%%%%%%%%%%%%%%%%%%%%
\section{Background}
%%%%%%%%%%%%%%%%%%%%%%%%%%%%%%%%%%%%%%%%%%%%%%%%%%%%%%%%%%%%%%%%%%%%%%%%%%%%%%%%%%%%%%%%%%%%%%%%%%%%%%%

\subsection{Parametric families of PSDBPs}

A discrete-time \emph{population-size-dependent branching process} (PSDBP) is a Markov chain $\{Z_n\}_{n\in\N_0}$ characterised by the recursion given in \eqref{def:process} with $Z_0=N$, where
%where $\N_0=\N\cup\{0\}$,
$N$ is a positive integer, and where $\{\xi_{ni}(z):i=1,\ldots,z;\ z,n\in\N_0\}$ is a family of independent random variables
%defined on the probability space $(\Omega,\mathcal{A},P)$ and
with distribution $\boldsymbol{p}(z)=(p_k(z))_{k\in\N_0}$ such that $p_k(z)=\mathbb{P}[\xi_{01}(z)=k]$ \emph{depends on~$z$}.
We refer to $\boldsymbol{p}(z)$ as the \emph{offspring distribution at population size $z$}.
In $\{Z_n\}$ the state $0$ is absorbing, and under minor regularity assumptions, all other states are transient \cite{jagers92}.

In this paper we
assume
%focus on the case
that the \emph{family of offspring distributions} $(\boldsymbol{p}(z))_{z \in \mathbb{N}}$ belongs to some parametric family, that is, we assume that there exists
\[
 \T_0\in int(\Theta)\subseteq\R^d \qquad \text{such that} \qquad \boldsymbol{p}(z)\equiv\boldsymbol{p}(z,\T_0), \quad  \text{for each} \quad z \in \mathbb{N},
\]
where $int(\Theta)$ denotes the interior
of the \emph{parameter space} $\Theta$, and $d<\infty$.
As a consequence the PSDBP is parameterised by $\T_0$.
We assume that the map $\T \mapsto \boldsymbol{p}(z, \T)$ is a measurable function such that
$p_k(z,\cdot)$ is continuous on $\Theta$ for each $k \in \mathbb{N}_0$ and $z \in \N$, and is such that
the offspring mean $m(z)\equiv m(z,\T)$ and variance $\sigma^2(z)\equiv \sigma^2(z,\T)$ at population size $z$ are continuous on $\Theta$ for each $z \in \mathbb{N}$.
In this framework, our aim is to estimate the parameter $\T_0$ based on the sample of the population sizes $\mathcal{Z}_n:=\{Z_0,\ldots,Z_n\}$, \blue{where $Z_n>0$.}

\subsection{The $Q$-process associated with $\{Z_n\}$}\label{sec:Q-process}

Let $\textbf{Q}=(Q_{ij})_{i,j \in \N}$ refer to the sub-stochastic transition probability matrix of $\{Z_n\}_{n\in\N_0}$ restricted to the transient states $\{1,2,\ldots\}$.
The entry $Q_{ij}$ is the probability that the population transitions from size $i$ to size $j$ in a single time unit. The $i$th row of $\textbf{Q}$ thus corresponds to the $i$-fold convolution of the offspring distribution at population size $i$.
Because the family of offspring distributions $(\boldsymbol{p}(z))_{z \in \mathbb{N}}$ depends on $\T_0$, so does the transition probability matrix, and consequently, $\textbf{Q}\equiv \textbf{Q}(\T_0)$, where this shorthand notation extends naturally to any $\T \in \Theta$.

 For every $n\in\N_0$ fixed, the PSDBP $\{Z_\ell\}_{0\leq \ell\leq n}$ conditioned on $Z_{n}>0$ \blue{(implied by the sampling scheme)} is a \textit{time-inhomogeneous} Markov chain that we denote by $\{Z_\ell^{(n)}\}_{0\leq \ell\leq n}$ and whose one-step transition probabilities are
\begin{align}\label{condProb}
\mbP(Z_{\ell+1}^{(n)}=j\,|\,Z_{\ell}^{(n)}=i):=\mbP(Z_{\ell+1}=j\,|\,Z_{\ell}=i,\;Z_{n}>0)=Q_{ij}\,\dfrac{\vc e_j^\top \textbf{Q}^{n-\ell-1} \vc 1}{\vc e_i^\top \textbf{Q}^{n-\ell}\vc 1}\quad \text{ for }i,j\geq 1.
\end{align} \blue{A fundamental role will be played by the limit of this time-inhomogeneous Markov chain as $n\to\infty$, which we consider under the following regularity conditions (assumed to hold throughout the paper)}:
\begin{ass} \label{Asss1}
For each $\T\in\Theta$,
\begin{enumerate}[label=(C\arabic*),ref=(C\arabic*),start=1]
%\item There exists $z\in\N$ and $n\in\N$ such that $(Q(\T)^n)_{zz}>0$.
\item The matrix $\textbf{Q}(\T)$ is irreducible.\label{cond:irreducible-PSDBP}
\item $\limsup_{z\to\infty} m(z,\T)<1$.\label{cond:lim-sup-m-z}
\item For each $\nu\in\N$, $\sup_{z\in\N} \sum_{k=1}^\infty k^\nu p_k(z,\T)<\infty$\label{cond:bounded-moments-xi}.
\end{enumerate}
\end{ass}

Under Assumptions \ref{cond:irreducible-PSDBP}--\ref{cond:bounded-moments-xi}, the PSDBP becomes extinct almost surely for any $\T \in \Theta$ and initial state $Z_0=i$ (see \cite[Proposition 3.1]{Gosselin-2001}), that is, $\mbP_i(Z_n\to 0)=1$, where $\mbP_i(\cdot)$ denotes the probability measure given the initial state is $i$.
In addition, there exists $\rho$, $\vc u$, and $\vc v$, where $\rho:=\rho(\textbf{Q})=\lim_{n\rightarrow\infty} (\textbf{Q}^n)_{ij}^{1/n}$ \blue{is the convergence norm of $Q$}, and $\vc u$ and $\vc v$ are strictly positive column vectors such that
\begin{eqnarray}\label{uv}\vc u^\top \textbf{Q}=\rho \vc u^\top, \quad \textbf{Q}\vc v=\rho \vc v, \quad \vc u^\top\vc 1=1,\quad \textrm{and}\quad \vc u^\top\vc v=1,\end{eqnarray}
and
\begin{equation}\label{Qn}
\textbf{Q}^n\sim\rho^n \vc v\vc u^\top, \quad\textrm{as $n\rightarrow\infty$};
\end{equation}
see for instance \cite[\blue{Lemma 6.1}]{Gosselin-2001}.
\blue{The vector $\vc u$ corresponds to the \emph{quasi-stationary} (or \emph{quasi-limiting}) distribution of $\{Z_n\}$, while the vector $\vc v$ captures the relative ``strength'' of each state. Indeed, using \eqref{Qn}, and letting $\vc e_i$ denote a column vector where the $i$th entry is 1 and all other entries are 0,
$$\lim_{n\rightarrow\infty}\mbP_i(Z_n=j\,|\,Z_{n}>0)=\lim_{n\rightarrow\infty}\dfrac{\vc e_i^\top \textbf{Q}^n \vc e_j}{\vc e_i^\top\textbf{Q}^{n}\vc 1}=\lim_{n\to\infty}\dfrac{\rho^n v_i u_j}{\rho^n v_i}=u_j,\qquad j\geq 1,$$and
$$\lim_{n\rightarrow\infty}\dfrac{\mbP_j(Z_{n}>0)}{\mbP_i(Z_{n}>0)}=\lim_{n\rightarrow\infty}\dfrac{\vc e_j^\top \textbf{Q}^n \vc 1}{\vc e_i^\top \textbf{Q}^{n}\vc 1}=\dfrac{v_j}{v_i},\qquad i,j\geq 1.$$}

Taking the limit as $n\rightarrow\infty$ in \eqref{condProb} and using \eqref{Qn} leads to homogeneous transition probabilities:
\begin{align}\label{tpZh}
\mbP(Z_{\ell+1}^\uparrow=j\,|\,Z_{\ell}^\uparrow=i) := \lim_{n\rightarrow\infty}\,\mbP(Z_{\ell+1}^{(n)}=j\,|\,Z_{\ell}^{(n)}=i) = \lim_{n\rightarrow\infty}Q_{ij}\,\dfrac{\vc e_j^\top \rho^{n-\ell-1} \,\vc v }{\vc e_i^\top \rho^{n-\ell} \,\vc v } =Q_{ij}\,\dfrac{v_j}{\rho v_i}.
\end{align}
These transition probabilities define a new positive-recurrent \emph{time-homogeneous} Markov chain $\{Z^{\uparrow}_\ell\}_{ \ell\geq 0}$, which we refer to as the \emph{$Q$-process} associated with $\{Z_n\}$. {This process
%The $Q$-process $\{Z^{\uparrow}_\ell\}$
can be interpreted} as the original process $\{Z_n\}$ conditioned on not being extinct in the distant future.
The $n$-step transition probabilities of the $Q$-process $\{Z^{\uparrow}_\ell\}$ are given by
$$({\textbf{Q}^{\uparrow}}^n)_{ij}:=\mbP(Z^{\uparrow}_n=j\,|\,Z^{\uparrow}_0=i)=(\textbf{Q}^n)_{ij} \dfrac{v_j}{\rho^n v_i },\quad \text{ for }n,i,j\geq 1,$$
and, by \eqref{Qn}, its stationary distribution is therefore
\begin{eqnarray}\label{stat_dis}
\lim_{n\rightarrow\infty} \mbP(Z^{\uparrow}_n=j\,|\,Z^{\uparrow}_0=i) &=& u_j v_j,\quad \text{ for }i, j\geq 1.
\end{eqnarray}
\blue{The stationary distribution of $\{Z^{\uparrow}_\ell\}$ appears in the asymptotic weights of the $C$-consistent least squares estimators (see Lemma \ref{weights}) and in their asymptotic variance (see Eq.\ \eqref{gamma}).}

For $z\in\N$ and $\T\in\Theta$, we introduce the analogue of the mean offspring at population size $z$, $m(z,\T)$, in the $Q$-process $\{Z^{\uparrow}_\ell\}$,
\begin{align}\label{mup}
m^\uparrow(z,\T)&:=\frac{1}{z}\mbE[Z^\uparrow_{n+1,\T}|Z^\uparrow_{n,\T}=z]=\frac{1}{z}\sum_{k=1}^\infty k\,Q_{zk}^\uparrow(\T),
\end{align}
\blue{which plays a key role in the definition of the new $C$-consistent estimators \eqref{Asss2}}.
Similarly, we introduce the analogue of the normalised offspring variance,
\begin{align}
\sigma^{2\uparrow}(z,\T)&=\frac{1}{z^2}\text{Var}[Z^\uparrow_{n+1,\T}|Z^\uparrow_{n,\T}=z]\label{varQ}\\\nonumber
&=\frac{1}{z^2}\left[\sum_{k=1}^\infty k^2Q_{zk}^\uparrow(\T)-\left(\sum_{k=1}^\infty k Q_{zk}^\uparrow(\T)\right)^2\right]=\left[\frac{1}{z^2}\sum_{k=1}^\infty k^2Q_{zk}^\uparrow(\T)\right]-m^\uparrow(z,\T)^2,
\end{align}
\blue{which appears in the asymptotic variance of the $C$-consistent estimators (see Eq.\ \eqref{gamma}).}

\subsection{A motivating example}\label{ex:motivation}

In this section we introduce a flexible class of PSDBPs which we will return to in our empirical analysis in Section \ref{empirical}. We consider models in which, every time unit, each individual successfully reproduces with a probability $r(z,\T)$ that depends on the current population size $z$, and if reproduction is successful, then the individual produces a random number of offspring with distribution $\vc b$ which does not depend on the current population size.
More precisely, we
consider a parametric class of PSDBPs with offspring distributions $\{\boldsymbol{p}(z)\}_{z \in \N}$ which are characterised by
\begin{align}\label{pj}
p_j(z,\T)=(1-r(z,\T))\ind{j=0}+r(z,\T)\, b_j(\mu),\qquad \qquad j\in\N_0,
\end{align}
%(1-q)^j q
where $z\in\N$, $\boldsymbol{b}(\mu)=\{b_j(\mu)\}_{j\geq 0}$ is a distribution on the non-negative integers with mean $\mu>1$,
$\T=(K,\mu)\in\R_+\times (1,\infty)$, and $0\leq r(z,\T)\leq 1$.
In other words, individuals give birth according to a zero-inflated distribution, where $1-r(z,\T)$ is the increase in the probability that individuals have no offspring.
We can select the function $r(\cdot)$ so that the PSDBP mimics well-established population models:
\begin{eqnarray}\label{bh}
r(z,\T)&=&\displaystyle\frac{K}{K+(\mu-1) \,z},\quad \text{ \emph{(Beverton-Holt model)}}\\
\label{ricker}
r(z,\T)&=&\displaystyle\left(\dfrac{1}{\mu}\right)^{z/K},\quad \quad \quad\;  \text{ \emph{(Ricker model).}}
\end{eqnarray}
For these choices of $r(\cdot)$,
\[
m(z, \T) > 1 \quad\text{ if } z < K, \quad \text{ and } \quad m(z, \T) < 1 \quad \text{ if } z>K,
\]
and consequently $K$ is the {carrying capacity} of the models. If the distribution $\boldsymbol{b}(\mu)$ is aperiodic and has finite moments, then Assumption \ref{Asss1} can be readily verified for the models described above.

To gain a better understanding of the Beverton-Holt and Ricker models we conduct a simulation study. We suppose the distribution $\boldsymbol{b}(\mu)$ is geometric. For each model,
 we fix the parameter $\T_0=(K_0,\mu_0)=(30,2)$, we suppose that the population starts with $N=1$ individual, and we simulate $N_0=250$ independent \emph{non-extinct} trajectories of the process for $n=5000$ time units, that is, such that $Z_{5000}>0$.
   The first 150 time units of one of these trajectories for each model is plotted in Figure \ref{fig:ex2:path}.
\begin{figure}
\centering\includegraphics[width=0.35\textwidth]{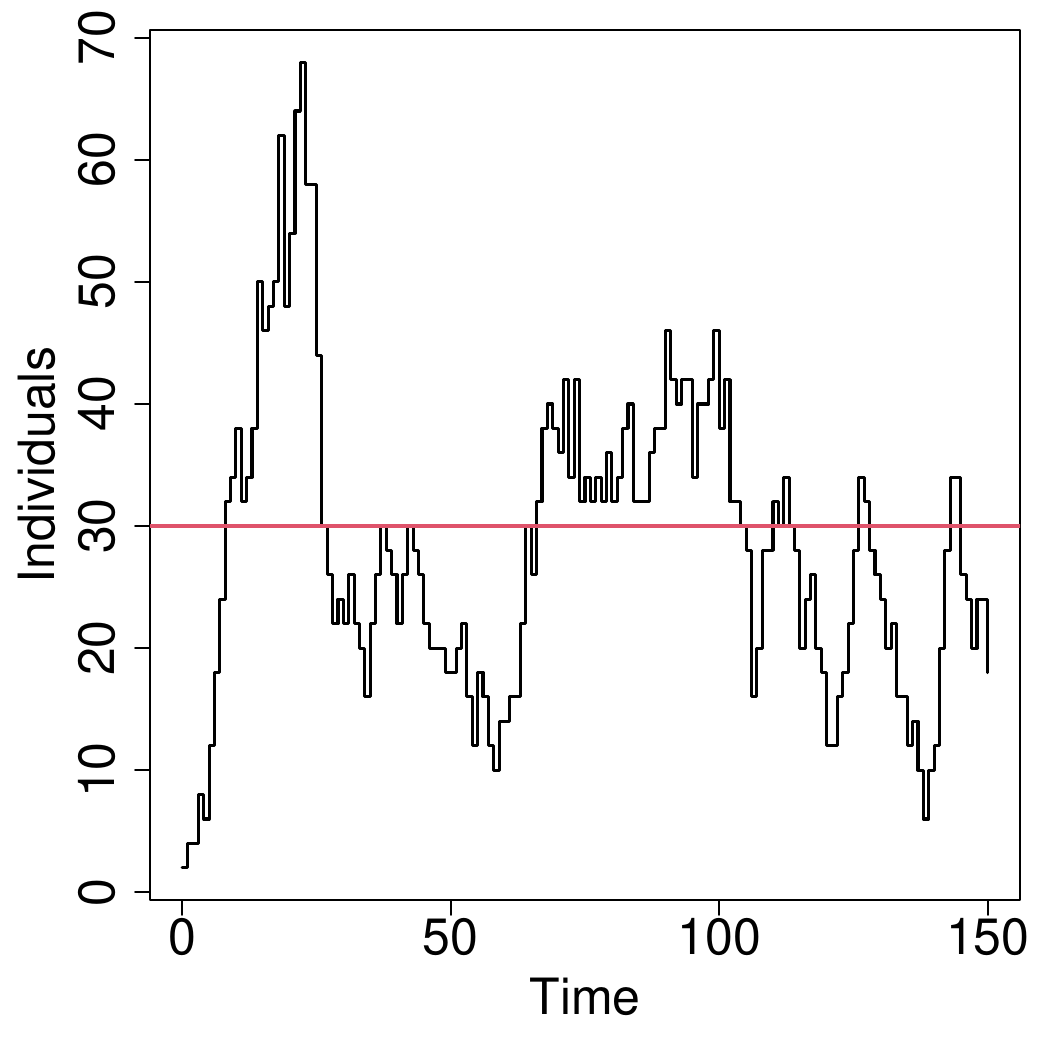}\hspace{0.5cm}\includegraphics[width=0.35\textwidth]{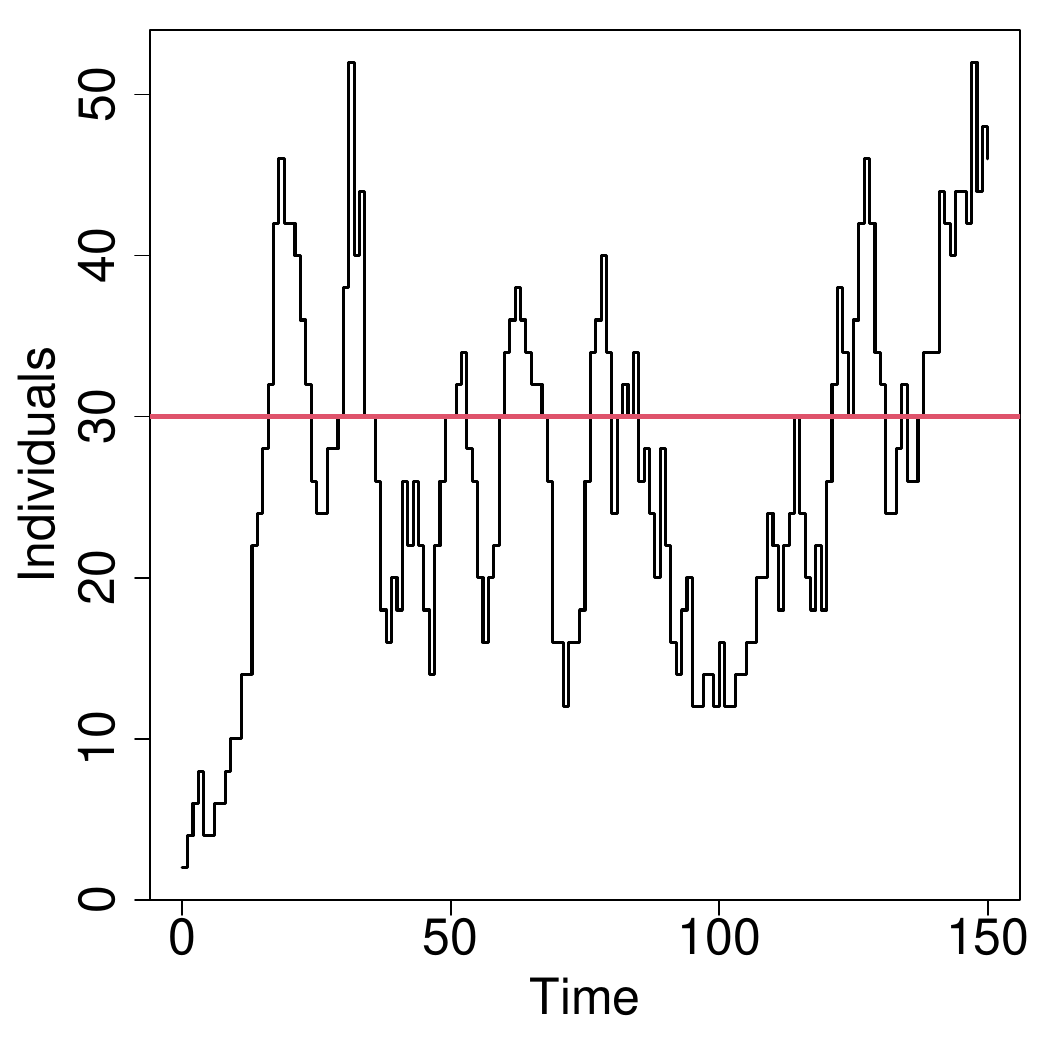}
\caption{Evolution of the population sizes of two PSDBPs over the first 150 time units. Left: Beverton-Holt model. Right: Ricker model. Red solid line represents the carrying capacity.}\label{fig:ex2:path}
\end{figure}
Suppose that, based on the simulated trajectories, we would like to estimate the parameters $\T_0$.
To this end we recall the MLE for $m(z, \T)$, the mean number of offspring at population size $z$,  derived in \cite[Proposition 1]{braunsteins2022parameter}:
\begin{equation}\label{MLEmz}
\hat{m}_{n}(z):=\frac{\sum_{i=0}^{n-1} Z_{i+1}\ind{Z_i=z}}{z\,\sum_{i=0}^{n-1}\ind{Z_i=z}},
\end{equation}where $\mathds{1}_A$ denotes the indicator function of the set $A$, and
 we take the convention that $0/0=0$.
In Figure \ref{fig:motivation-ex} we plot $m(z, \T_0)$, $m^\uparrow(z, \T_0)$ (defined in Eq.\ \eqref{mup}), and the means of $\hat{m}_n(z)$ for $n=5000$, $z\geq 1$, over the 250 trajectories, for the Beverton-Holt and the Ricker models.
Figure \ref{fig:motivation-ex} illustrates that the outcomes of $\hat m_n(z)$ cling more tightly to the function $m^\uparrow(z,\T_0)$ than to $m(z,\T_0)$, which is due to the bias caused by restricting our attention to \emph{non-extinct} trajectories.
In particular \cite[Theorem 1]{braunsteins2022parameter} states that, for any initial population size $i\in\N$ and population size $z \in \N$,
\begin{align}
\forall\varepsilon>0,\quad \lim_{n\to\infty}\mbP_i[|\hat{m}_{n}(z)-m^\uparrow(z,\T_0)|>\varepsilon|Z_n>0]&=0\qquad\qquad\textrm{($Q$-consistency)}.\label{eq:Q-consistency-m}
\end{align}
In Figure \ref{fig:motivation-ex} we see that the functions $z\mapsto m^\uparrow(z,\T_0)$ (red curve) and $z\mapsto m(z,\T_0)$ (blue curve) are relatively close to each other at moderate and high population sizes ($z\geq 10$), however, they are significantly different at low population sizes.
This motivates the weighted least squares estimators that we introduce in the next section.

\begin{figure}
\centering\includegraphics[width=0.35\textwidth]{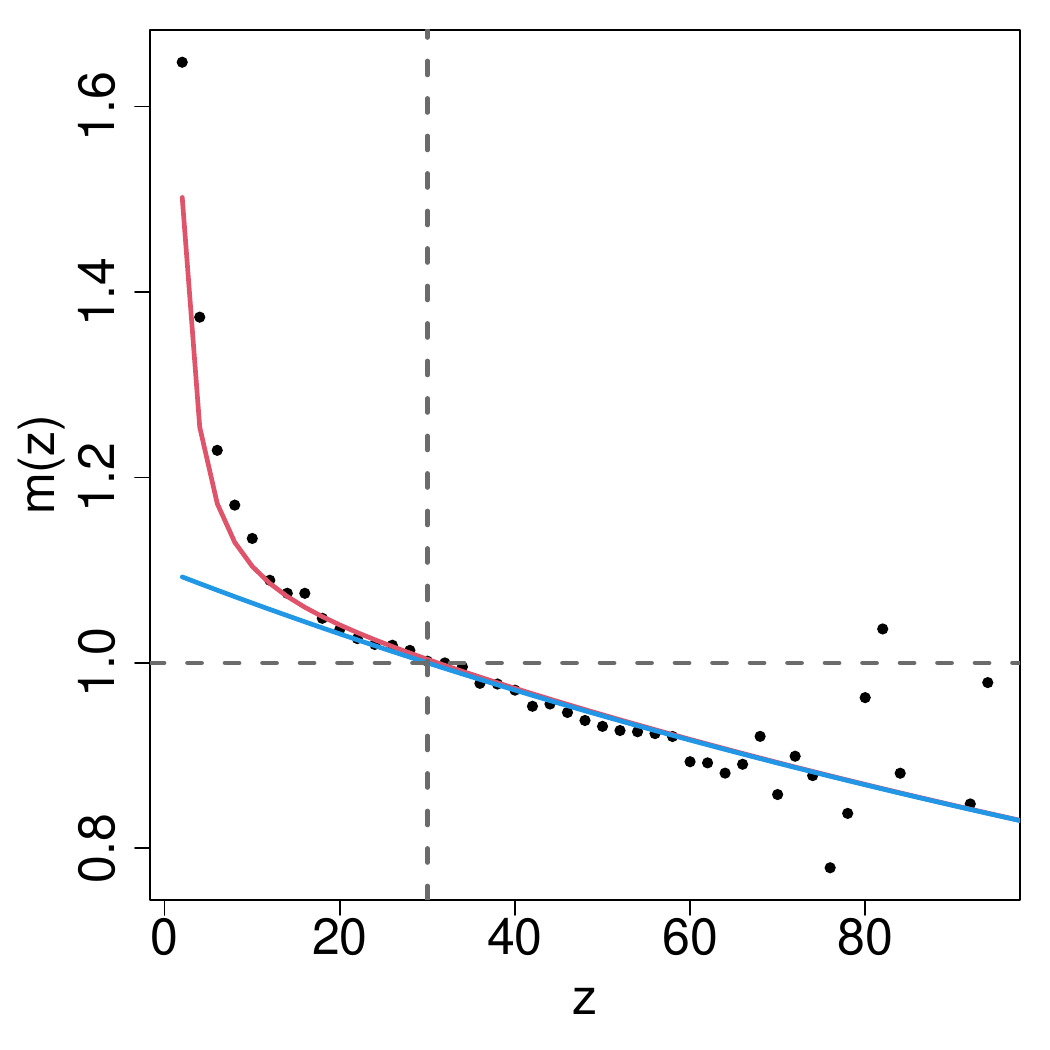}\hspace{0.5cm}\includegraphics[width=0.35\textwidth]{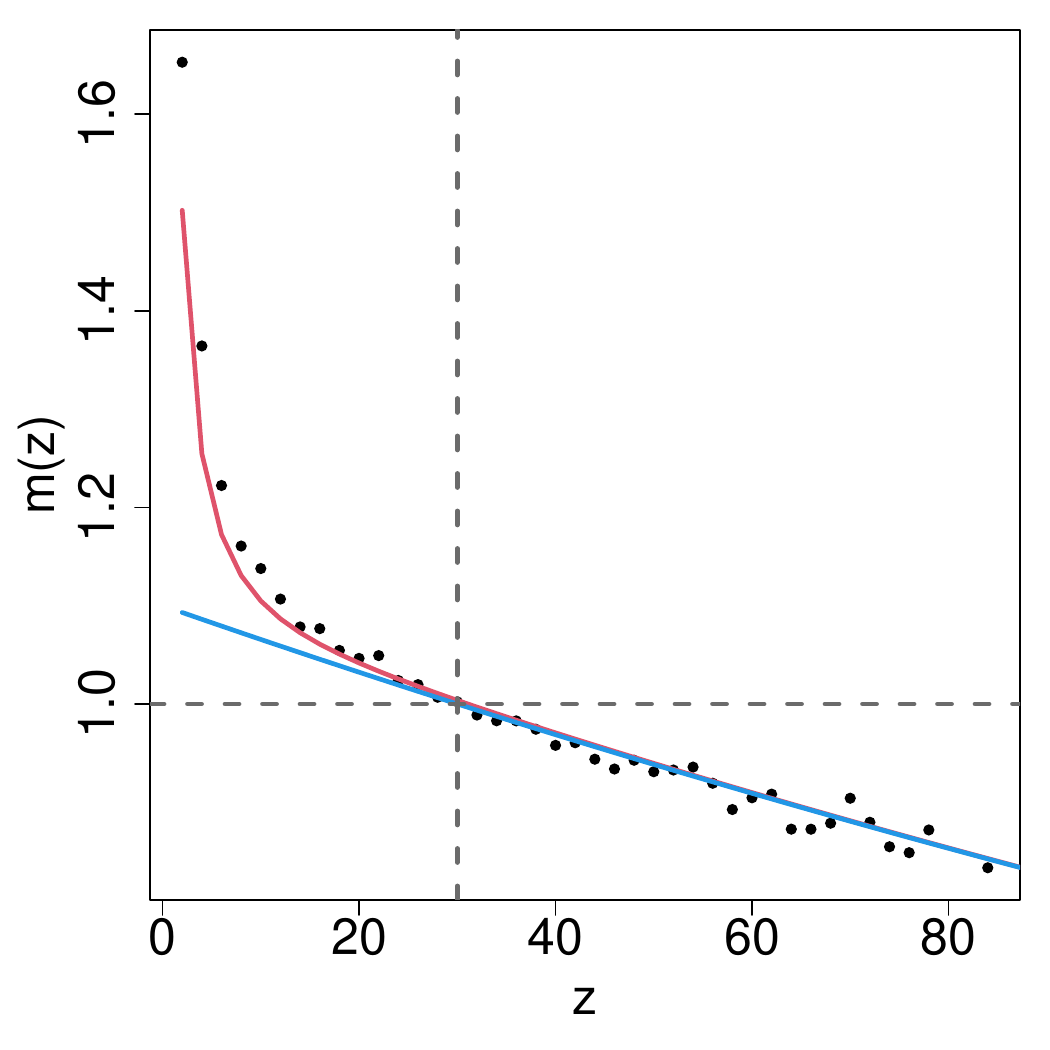}
\caption{Means of $\hat{m}_n(z)$ (calculated over \blue{$500$} simulated trajectories) for each observed population size $z$. Left: Beverton-Holt model. Right: Ricker model. The red solid curve represents the function $z\mapsto m^\uparrow(z,\T_0)$, while the blue solid curve represents the function $z\mapsto m(z,\T_0)$.}\label{fig:motivation-ex}
\end{figure}

%%%%%%%%%%%%%%%%%%%%%%%%%%%%%%%%%%%%%%%%%%%%%%%%%%%%%%%%%%%%%%%%%%%%%
\section{Weighted least squares estimators}\label{sec:wlse}
%%%%%%%%%%%%%%%%%%%%%%%%%%%%%%%%%%%%%%%%%%%%%%%%%%%%%%%%%%%%%%%%%%%%%

In the context of branching processes, classical least squares estimators
minimise the (weighted) squared differences between the population sizes $Z_k$ and their conditional mean $\mbE[Z_k\,|\, Z_{k-1}]=Z_{k-1}\,m(Z_{k-1},\T)$, for $k=1,\ldots, n$:
%This leads to the estimator
\begin{equation}\label{cwlse}
\widetilde{\T}_n^*:=\arg\min_{\T \in \Theta} \sum_{k =1}^{n} w_k\left\{Z_k - Z_{k-1}\,m(Z_{k-1}, \T)\right\}^2,
\end{equation}
where $\{w_k\}$ is an appropriately chosen weighting function, such as $w_k= Z_{k-1}^{-1}$ (see \cite[p.\ 40]{guttorp1991statistical}).
This approach has been applied to Galton--Watson branching processes \cite[Section~2.3]{guttorp1991statistical}, multi-type controlled branching processes \cite{art-2010}, branching processes with immigration \cite{klimko1978conditional}, and PSDBPs \cite{Lalam-Jacob-2004, Lalam-Jacob-Jagers-2004}.
Each of these papers develops asymptotic properties of $\widetilde{\T}_n^*$, such as consistency on the set of non-extinction, \emph{for processes with a positive chance of survival}.
Because we consider branching processes that \emph{become extinct almost surely}, we take a different approach.

To estimate $\T_0$, we \blue{propose a} class of least squares estimators that use the \textit{$Q$-consistent} MLEs $\{\hat m_n(z)\}_{z \in \N}$ and are \textit{$C$-consistent}, meaning they satisfy \eqref{C-cons}. The idea is that,
instead of summing over all time units $k=1,\ldots, n$ as in \eqref{cwlse}, we sum over all population sizes $z\geq 1$ and minimise the (weighted) squared differences between the MLEs $\hat{m}_n(z)$ and their conditional limits ${m}^{\uparrow}(z, \T)$:
\begin{equation*}
\widehat{\vc\theta}_n :=\arg\min_{\theta\in\Theta} \sum_{z=1}^\infty \hat{w}_n(z) \left\{\hat{m}_n(z)-{m}^{\uparrow}(z, \T)\right\}^2,
\end{equation*}
where the weights $\{\hat{w}_n(z)\}_{z\in \mbN}$ are computed from the observations $Z_0,Z_1,\ldots, Z_n$.
Estimators in this class can be viewed as hybrids between an MLE and a least squares estimator, where the MLEs $\{\hat m_n(z)\}_{z\in \mbN}$  first summarise the data, and the least squares step is then applied \blue{to the error between the MLE and its conditional limit, given $Z_n>0$ (noting that other least squares estimators can be interpreted similarly)}. In fact, as we show in Proposition \ref{wls_equiv}, for appropriate choices of weights, $\widehat{\vc\theta}_n$ corresponds to a modification of the classical estimator $\widetilde{\vc\theta}_n^*$ where $m(\cdot)$ is replaced by $m^\uparrow(\cdot)$.

We suppose that the weights in $\widehat{\vc\theta}_n$ 
%\eqref{Asss2} 
satisfy the following assumption:
\begin{enumerate}[label=(A\arabic*),ref=(A\arabic*)]
\item
For each $n\in \mbN$, $\{\hat{w}_n(z)\}$ is an empirical distribution (i.e., for each $z\in\mbN$, $\hat{w}_n(z)\geq 0$, and $\sum_{z\geq 1}\hat{w}_n(z)=1$), and there exists a limiting distribution $\{w_z\equiv w_z(\vc\theta_0)\}_{z\in \mbN}$ such that, for any $i,z\geq1$ and $\varepsilon>0$,
\begin{equation*}\lim_{n\rightarrow\infty} \mathbb P_{i}[|\hat{w}_n(z)-w_z|>\varepsilon\,|\,Z_n>0]=0.
\end{equation*}\label{HI_CARMEN!}
\end{enumerate}
Two natural choices for the weight functions are
\begin{equation}\label{weights_def}
\hat{w}^{(1)}_n(z) := \frac{\sum_{i=0}^{n-1}\ind{Z_i=z}}{n} \quad \text{ and } \quad \hat{w}^{(2)}_n(z):= \frac{z \sum_{i=0}^{n-1}\ind{Z_i=z}}{\sum_{i=0}^{n-1} Z_i},
\end{equation}
where $\hat{w}^{(1)}_n(z)$ assigns weights according to the proportion of \textit{time units with population size $z$}, and $\hat{w}^{(2)}_n(z)$ assigns weights according to the proportion of \textit{parents alive when the population size is $z$}.

\begin{lem}\label{weights}
The weights $\{\hat{w}^{(1)}_n(z)\}$ and $\{\hat{w}^{(2)}_n(z)\}$ satisfy Assumption (A1) with respective limiting distributions $\{w_z^{(1)}\}$ and $\{w_z^{(2)}\}$ characterised by
 \[
{w}^{(1)}_z = u_zv_z \quad \text{ and } \quad {w}^{(2)}_z= \frac{z u_z v_z}{\sum_{k=1}^\infty k u_kv_k}, \qquad z\in \mbN.
\]
\end{lem}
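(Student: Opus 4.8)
The plan is to verify the two requirements of Assumption \ref{HI_CARMEN!} separately: that for each fixed $n$ the weight family is a probability distribution, and that it converges, conditionally on non-extinction, to the stated limit. The first requirement is elementary. Since $0$ is absorbing, on the event $\{Z_n>0\}$ we have $Z_0,\dots,Z_{n-1}\geq 1$, so interchanging the finite sums gives $\sum_{z\geq 1}\sum_{\ell=0}^{n-1}\ind{Z_\ell=z}=n$ and $\sum_{z\geq 1}z\sum_{\ell=0}^{n-1}\ind{Z_\ell=z}=\sum_{\ell=0}^{n-1}Z_\ell$; together with nonnegativity this yields $\sum_{z\geq1}\hat w^{(1)}_n(z)=\sum_{z\geq1}\hat w^{(2)}_n(z)=1$, so both families are empirical distributions on $\{Z_n>0\}$.

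For the convergence, write $N_n(z):=\sum_{\ell=0}^{n-1}\ind{Z_\ell=z}$, so that $\hat w^{(1)}_n(z)=N_n(z)/n$ and $\hat w^{(2)}_n(z)=z\,N_n(z)\big/\sum_{\ell=0}^{n-1}Z_\ell$. The whole argument then reduces to the two conditional ergodic limits: for each $i,z\geq1$ and $\eps>0$,
\begin{equation}\label{eq:plan1}
\mbP_i\!\left(\left|N_n(z)/n-u_zv_z\right|>\eps \,\middle|\, Z_n>0\right)\to 0
\quad\text{and}\quad
\mbP_i\!\left(\left|\tfrac1n\textstyle\sum_{\ell=0}^{n-1}Z_\ell-\sum_{k\geq1}ku_kv_k\right|>\eps \,\middle|\, Z_n>0\right)\to 0 .
\end{equation}
Given \eqref{eq:plan1}, the stated limits follow immediately: $w^{(1)}_z=u_zv_z$ directly, and $w^{(2)}_z=z\,u_zv_z\big/\sum_k ku_kv_k$ by the continuous mapping theorem applied to the ratio $(a,b)\mapsto a/b$, since the denominator limit $\sum_k ku_kv_k$ is a strictly positive finite constant (recall $\sum_k u_kv_k=\vc u^\top\vc v=1$).

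To establish \eqref{eq:plan1} I would pass through the $Q$-process $\{Z^\uparrow_\ell\}$, which by Section \ref{sec:Q-process} is time-homogeneous, irreducible, and positive recurrent with stationary distribution $\pi_z=u_zv_z$. The ergodic theorem for positive-recurrent Markov chains then gives, under $\mbP^\uparrow_i$, the almost-sure limits $N_n(z)/n\to u_zv_z$ and---provided $\sum_k ku_kv_k<\infty$---$\tfrac1n\sum_{\ell=0}^{n-1}Z^\uparrow_\ell\to\sum_k ku_kv_k$. To transfer these from $\mbP^\uparrow_i$ to the conditional law, set $h_m(j):=\vc e_j^\top Q^m\vc 1/(\rho^m v_j)$, so that $h_m(j)\to1$ by \eqref{Qn}. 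The Radon--Nikodym derivative, obtained as the product over $\ell$ of the ratios of the kernels \eqref{condProb} and \eqref{tpZh}, telescopes to
\begin{equation*}
\frac{d\,\mbP_i(\cdot\mid Z_n>0)}{d\,\mbP^\uparrow_i}\bigg|_{\sigma(Z_0,\dots,Z_n)}=\frac{1}{v_{Z_n}\,h_n(i)}=:W_n ,
\end{equation*}
a weight depending only on the endpoint $Z_n$ and satisfying $\mbE^\uparrow_i[W_n]=1$. Hence for any event $A_n\in\sigma(Z_0,\dots,Z_n)$ with $\mbP^\uparrow_i(A_n)\to0$ we have $\mbP_i(A_n\mid Z_n>0)=\mbE^\uparrow_i[W_n\ind{A_n}]\to0$ as soon as $\{W_n\}$ is uniformly integrable under $\mbP^\uparrow_i$; applying this to the two bad events in \eqref{eq:plan1}, whose $\mbP^\uparrow_i$-probabilities vanish by the ergodic theorem, yields \eqref{eq:plan1}.

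The main obstacle is the quantitative tail control of the $Q$-process underlying both the finiteness of the stationary mean $\sum_k ku_kv_k$ and the uniform integrability of $W_n$. For the latter it suffices to bound $\sup_n\mbE^\uparrow_i[v_{Z_n}^{-(1+\delta)}]$ for some $\delta>0$, and by the $h$-transform identity this equals $\sup_n\rho^{-n}v_i^{-1}\sum_j(Q^n)_{ij}v_j^{-\delta}$, which I would control using the subcriticality-at-infinity condition \ref{cond:lim-sup-m-z} together with the uniform moment bounds \ref{cond:bounded-moments-xi}; these force geometric decay of the quasi-stationary tail and of the entries $(Q^n)_{ij}$, giving simultaneously the finite stationary mean and the required moment bound. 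These are precisely the occupation-time and tail estimates developed for the $Q$-consistency of $\hat m_n(z)$ in \cite{braunsteins2022parameter}, which I would invoke or parallel; the genuinely new ingredient here is organising them into the clean change-of-measure transfer above, and in particular ensuring it applies to the \emph{unbounded} functional $\tfrac1n\sum_\ell Z_\ell$ (not merely bounded indicators), where finiteness of the stationary mean is the essential point.
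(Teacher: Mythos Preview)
Your decomposition is the same as the paper's: write $\hat w^{(2)}_n(z)=z\,\hat w^{(1)}_n(z)\big/\big(n^{-1}\sum_{\ell<n}Z_\ell\big)$, establish the two conditional limits in \eqref{eq:plan1}, and finish with Slutsky/continuous mapping. The difference is only in how \eqref{eq:plan1} is obtained. The paper simply cites ready-made conditional ergodic theorems: \cite[Theorem 3(c)]{Gosselin-2001} (equivalently \cite[Lemma 1]{braunsteins2022parameter}) gives $N_n(z)/n\to u_zv_z$ conditionally on $Z_n>0$, and \cite[Lemma 6]{braunsteins2022parameter} gives $n^{-1}\sum_{\ell<n}Z_\ell\to\sum_k ku_kv_k$ conditionally on $Z_n>0$; the proof is then two lines. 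You instead re-derive these from scratch via the $h$-transform: use the ergodic theorem for the positive-recurrent $Q$-process and transfer back through the explicit Radon--Nikodym density $W_n=1/(v_{Z_n}h_n(i))$. This is a correct and conceptually clean route, and your telescoping computation of $W_n$ is right. What it buys is transparency about \emph{why} the conditional ergodic averages pick out the $Q$-process stationary law; what it costs is that the remaining work---uniform integrability of $\{W_n\}$ under $\mbP^\uparrow_i$ and finiteness of $\sum_k ku_kv_k$---is exactly the technical content of the lemmas the paper cites, so you have not shortened anything. One caution on your UI sketch: the quantity $\sup_n\rho^{-n}v_i^{-1}\sum_j(Q^n)_{ij}v_j^{-\delta}$ has limiting value $\sum_j u_jv_j^{-\delta}$, and controlling this requires knowing that $v_j$ does not decay too fast relative to the tail of $\vc u$; this is true under Assumption~\ref{Asss1} via the weighted-norm machinery of \cite[Section 5.3.1]{braunsteins2022parameter}, but it is not a one-line consequence of \ref{cond:lim-sup-m-z}--\ref{cond:bounded-moments-xi} alone.
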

Note that $\{w_z^{(1)}\}$ is the stationary distribution of the $Q$-process $\{Z^{\uparrow}_\ell\}$ (see Eq.\ \eqref{stat_dis}), and $\{w_z^{(2)}\}$ is the size-biased distribution associated with $\{w_z^{(1)}\}$. \blue{We discuss the choice of the weights in Remark \ref{rem:w2} and in Section \ref{disc}.}

%%%% NEW %%%%

Next we show that the new estimator $\widehat{\vc\theta}_n$ is equivalent to a modified version of the classical estimator $\widetilde{\T}_n^*$. This implies that all asymptotic properties for $\widehat{\vc\theta}_n$ also hold for the modified classical estimator.

\begin{prop}\label{wls_equiv}The least squares estimator $\widehat{\vc\theta}_n$ with weights $\{\hat{w}_n^{(1)}(z)\}$ or $\{\hat{w}_n^{(2)}(z)\}$ is equal to the classical least squares estimator \eqref{cwlse} modified such that $m(\cdot)$ is replaced by $m^\uparrow(\cdot)$,
\begin{equation}\label{cwlse_m}
\widehat{\T}_n^*:=\arg\min_{\T \in \Theta} \sum_{k =1}^{n} w_k^{(\cdot)} \left\{Z_k - Z_{k-1}\,m^\uparrow(Z_{k-1}, \T)\right\}^2,
\end{equation}
with respective weight $w_k^{(1)}= Z_{k-1}^{-2}$ or $w_k^{(2)}= Z_{k-1}^{-1}$.
%Similarly, the estimator $\widehat{\vc\theta}_n$ with weight $\{w_z^{(1)}\}$  is  equal to the modified estimator \eqref{cwlse} with weight $w(Z_{k-1})= Z_{k-1}^{-2}$.
\end{prop}

\begin{remark}\label{rem_wls_equiv}Proposition \ref{wls_equiv} is a consequence of a more general result; specifically,
$$\arg\min_{\theta\in\Theta} \sum_{z=1}^\infty \hat{w}^{(\cdot)}_n(z) \left\{\hat{m}_n(z)-f(z, \T)\right\}^2=\arg\min_{\T \in \Theta} \sum_{k =1}^{n} w_k^{(\cdot)} \left\{Z_k - Z_{k-1}\,f(Z_{k-1}, \T)\right\}^2,$$ for any function $f(\cdot)$ and weights as in Proposition \ref{wls_equiv}. Indeed, the proof of Proposition \ref{wls_equiv} does not make use of the properties of $m^\uparrow(\cdot)$.

\end{remark}

%%%%%%

\section{Asymptotic properties}\label{sec:as_prop}

We now establish that, under regularity assumptions, the class of least squares estimators $\widehat{\vc\theta}_n$ are $C$-consistent and asymptotically normal.
We start by introducing some notation: let $\|\cdot\|$ denote any norm in $\R^d$ (recall that $\T$ is a $d$-dimensional parameter), let \blue{$\Phi_{\Sigma}(\cdot)$} denote the distribution function of a multivariate normal distribution with mean vector ${\vc 0}=(0,\ldots,0)^\top$ and covariance matrix $\vc{\Sigma}$ whose dimension will be clear from the context,
and let $\vc\nabla$ denote the gradient operator with respect to $\T$.

We make the following assumptions:

\begin{enumerate}[label=(A\arabic*),ref=(A\arabic*),start=2]
\item If $m^\uparrow(z,\T_1)=m^\uparrow(z,\T_2)$ for each $z\in \supp\left(\{ w_z(\T_0)\}\right)$, then $\T_1=\T_2$.
\label{cond:identifiab}

\item $M:=\sup_{\T\in\Theta} \sup_{z\in\N}  m^\uparrow(z,\T)<\infty$.
\label{cond:m-arrow-unif-bounded}
\item
For any initial state $i\in\N$, there exists $C>0$ such that $$\lim_{n\to\infty}\mbP_i\left[\sup_{z\in\N} |\hat{m}_n(z)-m^\uparrow(z,\T_0)|<C \,\Big|\,Z_n>0\right]=1.$$\label{cond:m-arrow-hat-unif-bounded}
\item The function $m^\uparrow(z,\T)$ is twice continuously differentiable with respect to $\T$ for each $z\in\N$. Moreover, for each $\T'\in\Theta$, there exists a compact set $\mathcal{C}\equiv\mathcal{C}(\T')$ such that $\T'\in int(\mathcal{C})$ and
\label{cond:bound-m-arrow}
\begin{enumerate}[label=(\alph*),ref=(\alph*)]
\item $M_1^*(\T'):=\sup_{\T\in \mathcal{C}}\sup_{z\in\N}\big\|\nabla m^\uparrow(z,\T)\big\|_\infty<\infty$.\label{cond:m-arrow-bound-deriv}

\item $M_2^*(\T'):=\sup_{\T\in \mathcal{C}}\sup_{z\in\N}\max_{i,j=1,\ldots,d}\left|\frac{\partial^2m^\uparrow(z,\T)}{\partial\theta_i\partial\theta_j}\right|<\infty$. \label{cond:m-arrow-bound-deriv2}

\item The matrix $\vc\eta(\T_0):= 2\sum_{z=1}^\infty w_z(\T_0)\, \vc\nabla m^\uparrow(z,\T_0)\vc\nabla m^\uparrow(z,\T_0)^\top$ is invertible.
\label{cond:eta_inv}
\end{enumerate}

\noindent
We comment on these assumptions in Remark \ref{rem:assss} below.

\end{enumerate}

\begin{thm}[$C$-consistency of $\widehat{\T}_n$]\label{thm:C-consistency-WLSE-1}
Under Assumptions \ref{HI_CARMEN!}-\ref{cond:m-arrow-hat-unif-bounded}, for any $i\in\N$,
\begin{enumerate}[label=(\roman*),ref=\emph{(\roman*)}]
\item \blue{Asymptotic existence of $\widehat{\T}_n$}:
\label{thm:C-consistency-WLSE-1-i}
\blue{$$\lim_{n\to\infty} \mbP_i\left[\exists \vc X\in \Theta: S_n(\vc X)=\min_{\T\in\Theta} S_n(\T)\,\big|\,Z_{n}>0\right]=1,$$where $S_n(\T):=\sum_{z=1}^\infty \hat{w}_n(z) \left\{\hat{m}_n(z)-{m}^{\uparrow}(z, \T)\right\}^2.$}

\item $C$-consistency: \label{thm:C-consistency-WLSE-1-ii}
$$\forall\varepsilon>0,\quad\lim_{n\to\infty} \mbP_i\left[\big\|\widehat{\T}_n-\T_0\big\|>\varepsilon\,\big|\,Z_{n}>0\right]=0.$$
\end{enumerate}
\end{thm}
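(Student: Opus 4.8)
The plan is to read $\widehat{\T}_n$ as a minimum-contrast estimator and run the standard $\arg\min$-consistency scheme, but conditionally on the vanishing event $\{Z_n>0\}$ and with the extra care demanded by the fact that the contrast is an \emph{infinite} sum. Introduce the empirical criterion
$$U_n(\T):=\sum_{z\geq 1}\hat{w}_n(z)\{\hat{m}_n(z)-m^\uparrow(z,\T)\}^2$$
and its candidate limit
$$U(\T):=\sum_{z\geq 1}w_z\{m^\uparrow(z,\T_0)-m^\uparrow(z,\T)\}^2,$$
with $\{w_z\}$ the limiting weights of Assumption \ref{HI_CARMEN!}. Both conclusions follow once I establish (a) the uniform convergence $\sup_{\T\in\Theta}|U_n(\T)-U(\T)|\to 0$ in $\mbP_i[\,\cdot\mid Z_n>0]$-probability, and (b) that $U$ has a well-separated minimiser at $\T_0$, namely $U(\T_0)=0$ while $\inf_{\|\T-\T_0\|\geq\eps}U(\T)>0$ for each $\eps>0$. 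Part (b) is exactly where the identifiability Assumption \ref{cond:identifiab} enters, since $U(\T)=0$ forces $m^\uparrow(z,\T)=m^\uparrow(z,\T_0)$ for every $z\in\supp(\vc w(\T_0))$, hence $\T=\T_0$.

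The first technical ingredient is a uniform domination of the summands. Assumption \ref{cond:m-arrow-unif-bounded} bounds $m^\uparrow(z,\T)\leq M$ for all $z,\T$, while Assumption \ref{cond:m-arrow-hat-unif-bounded} supplies, with conditional probability tending to $1$, a constant $C$ with $\sup_{z}|\hat{m}_n(z)-m^\uparrow(z,\T_0)|<C$; together these yield, on a good event of conditional probability $\to 1$,
$$\sup_{z\in\N}\sup_{\T\in\Theta}|\hat{m}_n(z)-m^\uparrow(z,\T)|\leq C+2M.$$
Consequently every summand of $U_n(\T)$ is dominated by $(C+2M)^2\hat{w}_n(z)$, uniformly in $z$ and $\T$. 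Since $\sum_z\hat{w}_n(z)=1$, this makes $U_n$ finite and uniformly bounded, and---via the Weierstrass $M$-test together with the pointwise continuity of $\T\mapsto m^\uparrow(z,\T)$ inherited from the parametric model---continuous on $\Theta$, so its $\arg\min$ is a meaningful object on the good event.

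The core of the argument, and what I expect to be the main obstacle, is the uniform convergence in the presence of the infinite sum. I would split at a truncation level $Z$. For the head $\sum_{z\leq Z}$, each of the finitely many terms converges because $\hat{m}_n(z)\to m^\uparrow(z,\T_0)$ by the $Q$-consistency \eqref{eq:Q-consistency-m} and $\hat{w}_n(z)\to w_z$ by Assumption \ref{HI_CARMEN!}; this convergence is moreover \emph{uniform in} $\T$, since the deviation of the $n$th summand from its limit is controlled by $|\hat{w}_n(z)-w_z|$ and $|\hat{m}_n(z)-m^\uparrow(z,\T_0)|$ multiplied by the uniform bound $(C+2M)$, with no residual $\T$-dependence. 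For the tail $\sum_{z>Z}$, the domination bounds it by $(C+2M)^2\sum_{z>Z}\hat{w}_n(z)$ uniformly in $\T$, and $\sum_{z>Z}\hat{w}_n(z)\to\sum_{z>Z}w_z$ conditionally in probability, which is $<\eps$ once $Z$ is large because $\{w_z\}$ sums to one; the tail of $U$ is handled identically. Letting $n\to\infty$ and then $Z\to\infty$ yields $\sup_{\T\in\Theta}|U_n(\T)-U(\T)|\to 0$ conditionally in probability. Carrying this tail estimate out uniformly in $\T$ while every statement lives on the rare event $\{Z_n>0\}$ is precisely the new difficulty flagged in the introduction, and it is what Assumptions \ref{cond:m-arrow-unif-bounded}--\ref{cond:m-arrow-hat-unif-bounded} are designed to supply.

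With (a) and (b) in hand the two claims are routine. Fix $\eps>0$ small enough that the closed ball $\overline{B}:=\{\T:\|\T-\T_0\|\leq\eps\}\subset int(\Theta)$, and set $\delta:=\inf_{\|\T-\T_0\|\geq\eps}U(\T)>0$. On the good event, uniform convergence gives $U_n\geq\delta/2$ off the $\eps$-ball while $U_n(\T_0)\to U(\T_0)=0<\delta/2$; hence the continuous $U_n$ attains its minimum over $\Theta$ at an interior point of $\overline{B}$, which furnishes $\widehat{\T}_n$ and proves \ref{thm:C-consistency-WLSE-1-i}. For \ref{thm:C-consistency-WLSE-1-ii}, the same bounds show that $\{\|\widehat{\T}_n-\T_0\|>\eps\}$ would force $U_n(\widehat{\T}_n)\geq\delta/2$, contradicting $U_n(\widehat{\T}_n)\leq U_n(\T_0)\to 0$; therefore $\mbP_i[\|\widehat{\T}_n-\T_0\|>\eps\mid Z_n>0]\to 0$. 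Throughout, I would keep the conditioning under control by intersecting the finitely many good events---each of conditional probability tending to one---so that the whole analysis proceeds on their intersection.
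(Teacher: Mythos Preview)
Your proposal is correct and follows essentially the same route as the paper: reduce both claims to the uniform convergence $\sup_{\T\in\Theta}|U_n(\T)-U(\T)|\to 0$ in $\mbP_i[\,\cdot\mid Z_n>0]$-probability, then invoke a standard $\arg\min$/well-separated-minimum argument (the paper packages the latter as a separate Lemma~\ref{lem:help-consistency}). The only organisational difference is that the paper inserts an intermediate criterion $\hat S_n(\T)=\sum_z w_z\ind{j_n(z)>0}(m^\uparrow(z,\T)-\hat m_n(z))^2$ and controls $|S_n-\hat S_n|$ via the $\ell^1$-convergence of the weights (their Lemma~\ref{lem-sum}), whereas you truncate the sum directly; both devices exploit the same domination bound $(C+2M)^2$ from \ref{cond:m-arrow-unif-bounded}--\ref{cond:m-arrow-hat-unif-bounded} and the tail-tightness of $\{\hat w_n(z)\}$.
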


\begin{thm}[$C$-normality of $\widehat{\T}_n$] \label{thm:C-normality-WLSE-1}
Assume that either \begin{itemize}\item[\emph{(i)}] there exists a value $z^*<\infty$ such that for all $n\in\mbN$ and $z\geq z^*$, $\hat{w}_n(z)=0$, and Assumption~\ref{HI_CARMEN!} holds; or \item[\emph{(ii)}] for all $n\in\mbN$, $\{\hat{w}_n(z)\}= \{\hat{w}^{(1)}_n(z)\}$; or \item[\emph{(iii)}] for all $n\in\mbN$, $\{\hat{w}_n(z)\}= \{\hat{w}^{(2)}_n(z)\}$.\end{itemize}
Then, under Assumptions \ref{cond:identifiab}-\ref{cond:bound-m-arrow},
for any  $i\in\N$ and \blue{$\vc x\in \R^d$},
\begin{align}\label{eq:C-normality-weighted-estim-1}
\blue{\lim_{n\to\infty} \mbP_i\left[\sqrt{n}\left(\widehat{\T}_{n}-\T_{0}\right)\leq \vc x \,\big|\,Z_n>0\right]=\Phi_{\beta(\T_0)}(\vc x),}
\end{align}
where the covariance matrix
$$\vc\beta(\T_0)=\vc\eta(\T_0)^{-1}\,\vc\zeta(\T_0)\,\vc\eta(\T_0)^{-1}
$$
is a positive semi-definite matrix, with the $d$-dimensional matrices $\vc\eta(\T_0)$ and $\vc\zeta(\T_0)$ given by
\begin{eqnarray*}\vc\eta(\T_0)&=& 2\sum_{z=1}^\infty w_z(\T_0)\, \vc\nabla m^\uparrow(z,\T_0)\vc\nabla m^\uparrow(z,\T_0)^\top\\
\vc\zeta(\T_0)&=&4\sum_{z=1}^\infty w_z(\T_0)^2\,\gamma(z,\T_0) \,\vc\nabla m^\uparrow(z,\T_0)\vc\nabla m^\uparrow(z,\T_0)^\top,\end{eqnarray*}
where \begin{equation}\label{gamma}\gamma(z,\T_0):=\frac{\sigma^{2\uparrow}(z,\T_0)}{u_z(\T_0)v_z(\T_0)}.\end{equation}
\end{thm}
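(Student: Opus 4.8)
The plan is to follow the classical $M$-estimator route --- first-order condition plus a Taylor expansion --- but carried out under the conditional law $\mbP_i(\cdot\mid Z_n>0)$, with the infinite sum over $z$ controlled by the uniform bounds of Assumptions \ref{cond:m-arrow-unif-bounded}--\ref{cond:bound-m-arrow}. Write $S_n(\T)=\sum_{z\geq1}\hat w_n(z)\{\hat m_n(z)-m^\uparrow(z,\T)\}^2$. By Theorem~\ref{thm:C-consistency-WLSE-1}, with conditional probability tending to one $\widehat\T_n$ exists and lies in $int(\Theta)$; since the bounds $M_1^*,M_2^*<\infty$ permit termwise differentiation of $S_n$, the stationarity condition $\nabla S_n(\widehat\T_n)=\vc 0$ holds on this event. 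A componentwise mean-value expansion of $\nabla S_n$ about $\T_0$ then gives $\sqrt n(\widehat\T_n-\T_0)=-H_n^{-1}\,\sqrt n\,\nabla S_n(\T_0)$, where the rows of $H_n$ are rows of $\nabla^2 S_n$ evaluated at intermediate points $\tilde\T_n^{(1)},\ldots,\tilde\T_n^{(d)}$ between $\T_0$ and $\widehat\T_n$, so that $\tilde\T_n^{(j)}\to\T_0$ conditionally by consistency. By Slutsky's theorem it then suffices to show $H_n\to\eta(\T_0)$ in conditional probability and $\sqrt n\,\nabla S_n(\T_0)\Rightarrow N(\vc0,\zeta(\T_0))$ conditionally.

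First I would dispose of the Hessian. The matrix $\nabla^2 S_n$ splits into a Gauss--Newton part $2\sum_z\hat w_n(z)\,\nabla m^\uparrow(z,\T)\,\nabla m^\uparrow(z,\T)^\top$ and a residual part weighted by $\hat m_n(z)-m^\uparrow(z,\T)$. The residual part vanishes in conditional probability because $\hat m_n(z)\to m^\uparrow(z,\T_0)$ by $Q$-consistency \eqref{eq:Q-consistency-m}, the intermediate points converge to $\T_0$, the second derivatives are bounded by $M_2^*$ (Assumption \ref{cond:m-arrow-bound-deriv2}), and $\sum_z\hat w_n(z)=1$; the infinite sum is controlled by dominated convergence against the envelope supplied by \ref{cond:m-arrow-hat-unif-bounded} and \ref{cond:m-arrow-bound-deriv}. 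The Gauss--Newton part converges to $\eta(\T_0)$ using the weight convergence \ref{HI_CARMEN!} termwise together with the uniform bound $M_1^*$ to dominate the tail in $z$. One must also check that $\eta(\T_0)$ is nonsingular; this is the local counterpart of the identifiability condition \ref{cond:identifiab}, which guarantees that $\{\nabla m^\uparrow(z,\T_0):z\in\supp(\vc w)\}$ spans $\R^d$ and hence $\eta(\T_0)$ is positive definite, so that $H_n^{-1}\to\eta(\T_0)^{-1}$.

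The crux is the gradient. Setting $U_n:=\sqrt n\sum_z\hat w_n(z)\{\hat m_n(z)-m^\uparrow(z,\T_0)\}\nabla m^\uparrow(z,\T_0)$, we have $\sqrt n\,\nabla S_n(\T_0)=-2U_n$, and I claim $U_n\Rightarrow N(\vc0,\zeta(\T_0)/4)$. Writing $\hat m_n(z)-m^\uparrow(z,\T_0)=\{zV_n(z)\}^{-1}\sum_{i<n}(Z_{i+1}-z\,m^\uparrow(z,\T_0))\ind{Z_i=z}$ with $V_n(z)=\sum_{i<n}\ind{Z_i=z}$, the double sum defining $U_n$ becomes tractable precisely in the three cases of the statement. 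In cases (ii) and (iii), substituting the explicit weights makes the $z$-dependent prefactor collapse, so that interchanging the summations leaves a single additive functional $\tfrac1{\sqrt n}\sum_{i<n}g(Z_i,Z_{i+1})$ of the chain, with $g(Z_i,Z_{i+1})=(Z_{i+1}/Z_i-m^\uparrow(Z_i,\T_0))\nabla m^\uparrow(Z_i,\T_0)=:\Phi(Z_i,Z_{i+1})$ in case (ii), and the size-biased version $Z_i\,\Phi(Z_i,Z_{i+1})$ together with the normalisation $\sum_{j<n}Z_j/n\to\sum_z z\,u_zv_z$ (handled by Slutsky) in case (iii); in case (i) only finitely many states contribute, so no collapse is needed and a finite-dimensional CLT for the vector $(\hat m_n(z))_{z<z^*}$ suffices. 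In every case the relevant increments --- either $\Phi(Z_i,Z_{i+1})$ or the per-state terms $(Z_{i+1}-z\,m^\uparrow(z,\T_0))\ind{Z_i=z}$ --- are martingale differences \emph{under the $Q$-process law} $\mbP^\uparrow$, since $\mbE^\uparrow[Z_{i+1}/Z_i\mid Z_i=z]=m^\uparrow(z,\T_0)$ by \eqref{mup}. Stationarity and ergodicity of $\{Z^\uparrow_\ell\}$ then yield, via the martingale CLT for stationary ergodic differences, joint asymptotic normality; a direct computation using $\mbE^\uparrow[(Z_{i+1}/z-m^\uparrow(z,\T_0))^2\mid Z_i=z]=\sigma^{2\uparrow}(z,\T_0)$ and the stationary law $u_zv_z$ gives limiting covariance $\zeta(\T_0)/4$, the off-diagonal $z\neq z'$ terms dropping out because $\ind{Z_i=z}\ind{Z_i=z'}=0$ --- this is the origin of the diagonal form of $\zeta(\T_0)$. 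Finiteness of $\zeta(\T_0)$, i.e.\ $\sum_z u_zv_z\,\sigma^{2\uparrow}(z,\T_0)<\infty$, follows from the moment bound \ref{cond:bounded-moments-xi}.

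The main obstacle, and the step I would treat most carefully, is transferring this central limit theorem from $\mbP^\uparrow$ to the conditional law $\mbP_i(\cdot\mid Z_n>0)$ that appears in \eqref{eq:C-normality-weighted-estim-1}. The bridge is the Radon--Nikodym derivative of the conditioned path measure with respect to the $Q$-process: telescoping the one-step probabilities \eqref{condProb} against \eqref{tpZh} gives, on $\mathcal Z_n$, $\mathrm d\mbP_i(\cdot\mid Z_n>0)/\mathrm d\mbP^\uparrow_i=\rho^n v_i/\{v_{Z_n}\,\vc e_i^\top Q^n\vc 1\}$, which by \eqref{Qn} converges to the function $1/v_{Z_n}$ of the terminal state alone (and $\mbE^\uparrow[1/v_{Z_n}]=\sum_z u_z=1$). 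Thus the two laws differ asymptotically only through a reweighting of the endpoint $Z_n$. Since $U_n$ is a bulk functional of the trajectory and $\{Z^\uparrow_\ell\}$ is mixing, $U_n$ is asymptotically independent of $Z_n$, so that tilting by the bounded-mean factor $1/v_{Z_n}$ integrates out and leaves the limiting law of $U_n$ unchanged. Making this asymptotic independence precise --- quantifying the boundary layer near time $n$ and justifying the interchange of the limit with the tilt (e.g.\ via a uniform-integrability argument on $\rho^{-n}\vc e_i^\top Q^n\vc 1\to v_i$) --- is the heart of the argument. Once it is in place, the conditional limit of $U_n$ is the same $N(\vc0,\zeta(\T_0)/4)$, whence $\sqrt n\,\nabla S_n(\T_0)=-2U_n\Rightarrow N(\vc0,\zeta(\T_0))$ and, combined with $H_n^{-1}\to\eta(\T_0)^{-1}$ through Slutsky, $\sqrt n(\widehat\T_n-\T_0)\Rightarrow N(\vc0,\eta(\T_0)^{-1}\zeta(\T_0)\eta(\T_0)^{-1})$ conditionally on $Z_n>0$, as claimed.
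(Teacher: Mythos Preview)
Your overall skeleton---Taylor expansion of $\nabla S_n$ about $\T_0$, Hessian to $\eta(\T_0)$, gradient to $N(\vc0,\zeta(\T_0))$, Slutsky---coincides with the paper's, and your treatment of the Hessian (part $(b)$ there) is essentially the same. The genuine divergence is in the gradient CLT. The paper does \emph{not} collapse the double sum; instead it truncates in $z$, invokes the already-established joint conditional CLT for finitely many $\hat m_n(z)$ (their earlier Bernoulli paper, equations \eqref{eq:Q-normality-m}--\eqref{eq:Q-indep-m}), and then controls the tail $\sum_{z>k}$ via Billingsley's three-step lemma. Your observation that for $\hat w_n^{(1)}$ and $\hat w_n^{(2)}$ the $z$-sum collapses exactly to an additive functional $\tfrac1{\sqrt n}\sum_{i<n}\Phi(Z_i,Z_{i+1})$ (respectively its size-biased variant) is neat and bypasses the truncation entirely; it also makes transparent why $\zeta(\T_0)$ is diagonal in $z$. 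Both routes, however, eventually need the moment bound $\sum_z z^2 u_zv_z\,\sigma^{2\uparrow}(z,\T_0)<\infty$, which the paper proves in its appendix and which you would need for square-integrability of your martingale increments in case (iii).

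Where you are thin, and where the paper's machinery is heavier than you may expect, is precisely the transfer from $\mbP^\uparrow$ to $\mbP_i(\,\cdot\mid Z_n>0)$. Your Radon--Nikodym computation is correct, and the heuristic that $U_n$ is asymptotically independent of the terminal state $Z_n$ is the right intuition. But pushing the argument through as a change of measure requires uniform integrability of $1/v_{Z_n}$ under $\mbP^\uparrow$, i.e.\ control of $\sum_z u_z v_z^{-\delta}$ for some $\delta>0$, and nothing in Assumptions \ref{cond:irreducible-PSDBP}--\ref{cond:bounded-moments-xi} gives this directly. The paper avoids the issue by working on a sequence of probability spaces on which a \emph{coupling} $Z_\ell^{(n)}=Z_\ell^\uparrow$ for all $\ell\le n-K$ holds with probability $\ge 1-\eta(K)$ (imported from their earlier paper), and then shows by hand that the last $K$ terms contribute $O(1/n)$ to the difference $\sum_z(V_{nz}^{(n)}-V_{nz}^\uparrow)$. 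This boundary-layer estimate, together with the moment bound above, is the content of their appendix Lemma, and it is where most of the technical effort actually lives. Your route is viable, but to close it you would most likely end up reinventing that coupling rather than making the tilt-plus-independence argument fully rigorous.

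One minor point: your deduction that \ref{cond:identifiab} forces $\{\nabla m^\uparrow(z,\T_0)\}$ to span $\R^d$, hence $\eta(\T_0)$ invertible, conflates global injectivity with full rank of the Jacobian; the paper simply writes $\eta(\T_0)^{-1}$ without further comment, so neither treatment is airtight here.
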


\begin{remark}\label{rem:assss}
Assumption (A2)  holds for a wide a variety of models including those considered in Section \ref{ex:motivation}.
Assumptions (A3) and (A4) hold if, for instance, the offspring distribution has bounded support, which is a reasonable assumption in most biological models.
Assumption (A5) contains technical conditions that, in practice, would likely need to be verified numerically.
\end{remark}

\begin{remark}\label{rem:bounded_supp}
Applying Theorem \ref{thm:C-normality-WLSE-1} with weights that satisfy \emph{(i)} for a large $z^*$ should be sufficient in practice, as 
under Assumption \ref{Asss1}-\ref{cond:lim-sup-m-z}, the process rarely takes very large values.
\end{remark}

\blue{\begin{remark}\label{rem:w2}
We expect that the weighting function $\{\hat{w}^{(2)}_n(z)\}$ should lead to a more efficient estimator than $\{\hat{w}^{(1)}_n(z)\}$; this is because in weighted least squares estimation, optimal weights correspond to the inverse of the variance of the errors $\{\hat{m}_n(z)-{m}^{\uparrow}(z, \T)\}$ in the
$z$th term of the least squares estimator $\widehat{\vc \theta}_n$
\cite[Chapter 10]{wilcox2011introduction}.
By \eqref{varQ}, \eqref{gamma}, and
\eqref{eq:Q-normality-m},
%\footnote{we need to fix cross referencing as this no longer works for some reason}
%in the Supplementary Material, 
the variance of the error
$\{\hat{m}_n(z)-{m}^{\uparrow}(z, \T)\}$
is given by
\begin{align}\label{ga}
\gamma(z,\T_0)
=\frac{1}{z u_z(\T_0)v_z(\T_0)}\,\cdot\frac{\textrm{Var}[Z_{n+1,\T_0}^{\uparrow}\,|\,Z_{n,\T_0}^{\uparrow}=z]}{z},
\end{align}
where, under our conditions, we expect $\textrm{Var}[Z_{n+1,\T_0}^{\uparrow}\,|\,Z_{n,\T_0}^{\uparrow}=z]/z\approx \sigma^2(z,\T_0)$ which in practice should be approximately constant in $z$. By Lemma \ref{weights}, the conditional limit of $\{\hat{w}^{(2)}_n(z)\}$ is then approximately inversely proportional to $\gamma(z,\T_0)$. For a model where $\textrm{Var}[Z_{n+1,\T_0}^{\uparrow}\,|\,Z_{n,\T_0}^{\uparrow}=z]/z^2$ is approximately constant in $z$, we expect $\{\hat{w}^{(1)}_n(z)\}$ to be \mbox{preferable.}
\end{remark}}

\blue{In the next section we compare the $C$-consistent estimator $\widehat{\vc\theta}_n$ with its natural counterpart obtained by replacing ${m}^{\uparrow}(z, \T)$ with ${m}(z, \T)$ in \eqref{Asss2},
\begin{equation}\label{theta_tilde}
\widetilde{\vc\theta}_n :=\arg\min_{\theta\in\Theta} \sum_{z=1}^\infty \hat{w}_n(z) \left\{\hat{m}_n(z)-{m}(z, \T)\right\}^2.
\end{equation}
By Remark \ref{rem_wls_equiv}, $\widetilde{\vc\theta}_n$ corresponds exactly to the classical estimator $\widetilde{\T}_n^*$ in \eqref{cwlse} with the weights specified in Proposition \ref{wls_equiv}.
In general,
 due to \eqref{Qcons}, $\widetilde{\vc\theta}_n$ is not $C$-consistent when extinction is almost sure. The next proposition establishes the conditional limit of $\widetilde{\vc\theta}_n$, which depends on the chosen weighting function $\{\hat{w}_n(z)\}$ with limit $\{w_z\}$. It requires the additional assumption:
 \begin{enumerate}[label=(A\arabic**),ref=(A\arabic**),start=3]
\item $\widetilde{M}:=\sup_{\T\in\Theta} \sup_{z\in\N}  m(z,\T)<\infty$.
\label{cond:m-unif-bounded}
\end{enumerate}
 \begin{prop}[Conditional weak convergence of $\widetilde{\vc\theta}_n$] \label{thm:Q-consistency-WLSE-1}
Suppose that Assumptions \ref{HI_CARMEN!}, \ref{cond:m-arrow-unif-bounded}, \ref{cond:m-unif-bounded}, and \ref{cond:m-arrow-hat-unif-bounded} hold. Then, for any $i\in\N$,
$$\forall\varepsilon>0,\quad\lim_{n\to\infty} \mbP_i\left[\big\|\widetilde{\T}_n-\widetilde{\T}\big\|>\varepsilon\,\big|\,Z_{n}>0\right]=0,$$
where
\begin{equation}\label{tilde_lim}
\widetilde{\T}=\arg\min_{\T \in \Theta} \sum_{z=1}^\infty w_z \left\{m^\uparrow(z;\T_0)-m(z;\T)\right\}^2
\end{equation}
whenever $\widetilde{\T}$ exists.
\end{prop}

 \blue{As we illustrate in the next section, Proposition \ref{thm:Q-consistency-WLSE-1} can be used to quantify the long-term bias in the classical estimator introduced by the condition $Z_n>0$.}

 }

\section{Simulated examples}
\label{empirical}

We separate our analysis into two cases: \emph{(i)} populations that start in the quasi-stationary regime (near the carrying capacity), and \emph{(ii)} populations which are still growing and are yet to reach the carrying capacity. We compare the accuracy of $\widehat{\vc\theta}_n$ and $\widetilde{\vc\theta}_n$ based on non-extinct trajectories.
%In our examples, we consider the two weighting functions $\boldsymbol{w}_{n}^{(1)}:=\{\hat{w}^{(1)}_n(z)\}$ and $\boldsymbol{w}_{n}^{(2)}:=\{\hat{w}^{(2)}_n(z)\}$ defined in \eqref{weights_def}.
\blue{We develop the numerical study with the statistical software and language programming environments \texttt{R} (see \cite{R}) and  \texttt{MATLAB}. 
%Details about the packages used for the simulations are provided in the Supplementary Material.
}

\subsection{Quasi-stationary populations}\label{qsp}

\blue{We now focus on populations that linger around the carrying capacity, and return to} the illustrative example in Section \ref{ex:motivation} whose offspring distribution is given by \eqref{pj}, with $r(z,\T)$ given by \eqref{bh} (Beverton-Holt model)
%or \eqref{ricker} (Ricker model),
and where
$\boldsymbol{b}(
\mu)$ is a binary splitting distribution
with $b_2=v$ and $b_0=1-v$ (so that $\mu=2v$).
We fix the carrying capacity at \blue{$K_0=25$} and consider two scenarios, one with small fluctuations of $Z_n$ around the carrying capacity ($v_0=0.7$), and one with larger fluctuations (\blue{$v_0=0.53$}). The amplitude of the fluctuations depends on the parameter $\mu=2 v$ as follows: assuming the support of $m(z)$ is extended to $\mathbb{R}_+$, the larger $|m'(z)|$ at $z=K$, the smaller the fluctuations, as $m'(K)$ controls the strength of the drift toward $K$. For the Beverton-Holt model,  $m'(K)=1-2v$, so taking $ v_0=0.7$ leads to smaller fluctuations than \blue{$v_0=0.53$}.

We \blue{choose the quasi-stationary distribution $\vc u$ as the probability law for the initial population size $Z_0$} and simulate $N_0=2000$ trajectories of the process for $n=50, 500, \blue{5000}$ time units such that $Z_n>0$. \blue{These long non-extinct trajectories are simulated using the method described in \cite[Section~3.3]{braunsteins2022parameter}.}
In Figure~\ref{oooooh} we plot the \blue{empirical distribution of $Z_{5000}$}
% histogram of the population size \blue{$Z_{5000}$} based on the simulated trajectories
 for $ v_0=0.7$ (left panel) and \blue{$v_0=0.53$} (right panel). The spread of the two histograms confirms that the population exhibits larger fluctuations around the carrying capacity when \blue{$v_0=0.53$} than when $v_0=0.7$.

\begin{figure}[h!]
\centering\includegraphics[width=0.37\textwidth]{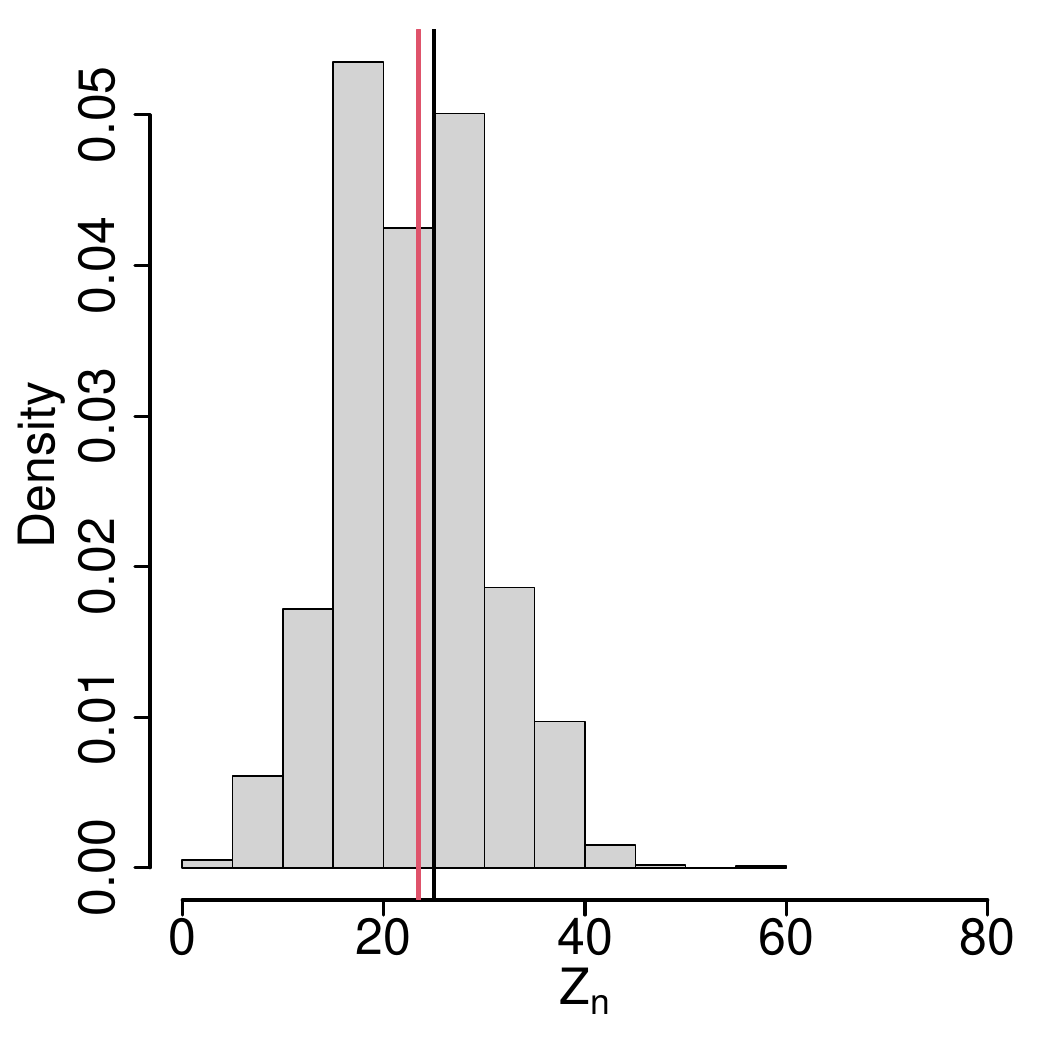}\hspace{0.5cm}
\centering\includegraphics[width=0.37\textwidth]{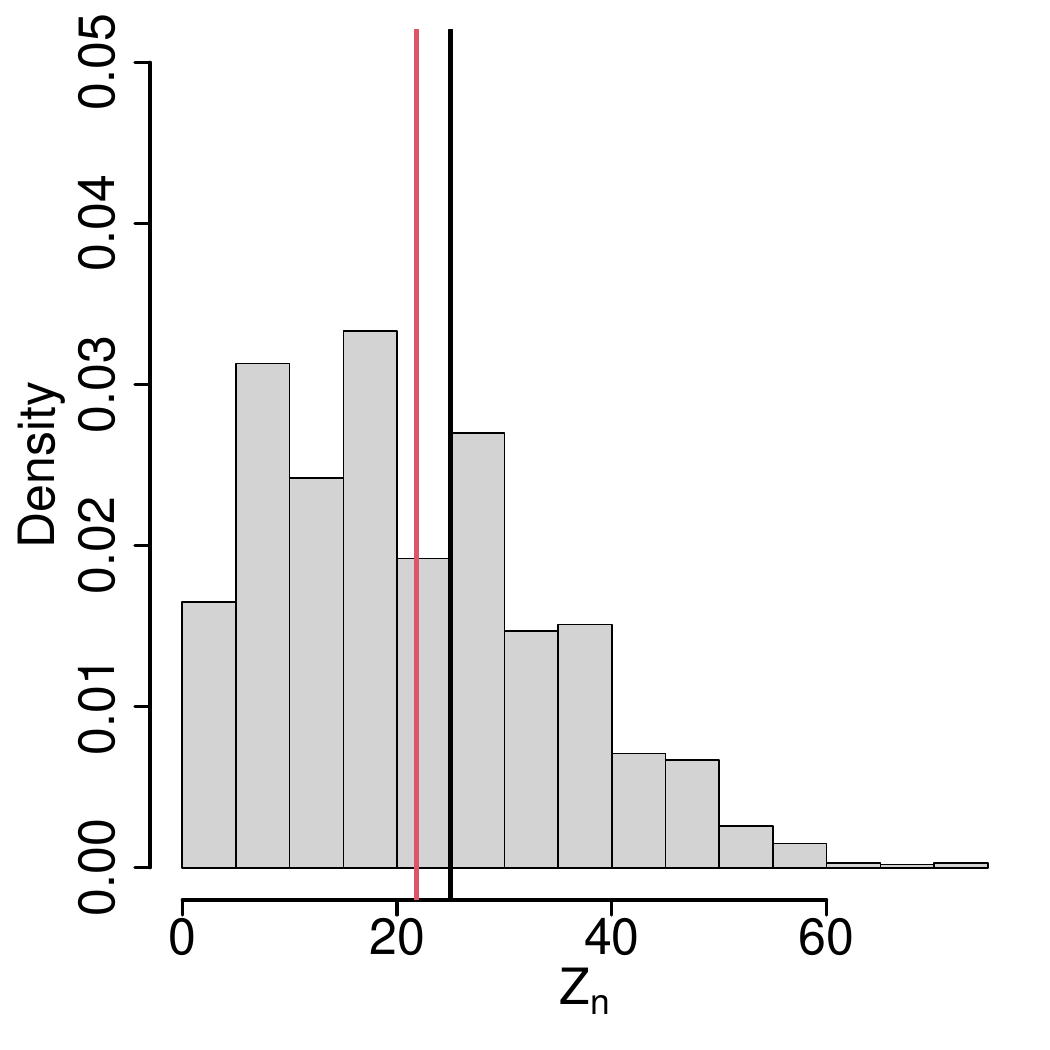}
\caption{\blue{\textsf{Beverton-Holt model}. Left: \textsf{Case with $(K_0, v_0) =(25,0.7)$}. Right: \textsf{Case with $(K_0, v_0) =(25,0.53)$}. Red solid line is the mean population size at time $n=5000$ over the $2000$ simulated paths. Black solid line is the carrying capacity $K_0$.\label{oooooh}}}
\end{figure}
\blue{Based on the simulated trajectories, we estimate $(K,v)$ using the $C$-consistent estimator $$\widehat{\vc \theta}_n=(\hat{K}_{n},\hat{v}_{n})=\arg\min_{(K,v)\in\Theta} \sum_{z\geq 1} \hat{w}_n(z) \left\{\hat{m}_n(z)-{m}^{\uparrow}(z, K,v)\right\}^2$$
%in \eqref{Asss2} 
and its classical counterpart $$\widetilde{\vc\theta}_n=(\tilde{K}_{n},\tilde{v}_{n})=\arg\min_{(K,v)\in\Theta} \sum_{z\geq 1} \hat{w}_n(z) \left\{\hat{m}_n(z)-{m}(z, K,v)\right\}^2.$$ 
%in \eqref{theta_tilde}, 
Here we focus on
%We present the results corresponding to the choice of 
the weighting function $\boldsymbol{w}_{n}^{(2)}:=\{\hat{w}_n^{(2)}(z)\}=\{z \sum_{i=0}^{n-1}\ind{Z_i=z}/(\sum_{i=0}^{n-1} Z_i)\}$;
 %defined in \eqref{weights_def}; 
 the findings from $\boldsymbol{w}_{n}^{(1)}:=\{\hat{w}_n^{(1)}(z)\}=\{\sum_{i=0}^{n-1}\ind{Z_i=z}/n\}$ support similar conclusions.}
\blue{In Figures \ref{booboo} and \ref{miawiaw}
 we display the empirical marginal and joint distributions of $(\hat{K}_n,\hat{v}_n)$ and $(\tilde{K}_n,\tilde{v}_n)$, for $v_0=0.7$ and $v_0=0.53$ respectively, when $n=5000$.
The blue curve in the histograms for $\hat{K}_n$ and $\hat{v}_n$ is the density of the normal distribution obtained using Theorem~\ref{thm:C-normality-WLSE-1}, and the vertical blue lines represent the corresponding theoretical $95\%$ confidence intervals. For comparison, the vertical dashed lines represent the empirical $95\%$ confidence intervals. We refer to Section~\ref{sec:cov} in Appendix~\ref{appB} for an analysis of the coverage probabilities of the joint confidence regions implied by Theorem~\ref{thm:C-normality-WLSE-1}.}
In addition, in Table~\ref{tab:summary-stationary-Kv-K50-mu1.4} we provide the mean, median, standard deviation (SD) and relative mean square error (RMSE) of the estimates obtained with each estimator, for $v_0=0.7$ and \blue{$v_0=0.53$}.

From Figures \ref{booboo} and \ref{miawiaw} and Table \ref{tab:summary-stationary-Kv-K50-mu1.4}, we observe that when $v_0=0.7$ (small fluctuations around the carrying capacity), the distributions of $\widehat{\vc \theta}_n$ and $\widetilde{\vc\theta}_n$ are similar, whereas when \blue{$v_0=0.53$} (larger fluctuations), there is a clear difference between the distributions of $\widehat{\vc \theta}_n$ and $\widetilde{\vc\theta}_n$, and a clear bias in $\widetilde{\vc\theta}_n$. To understand why, recall that there is a larger difference between $m^\uparrow(z,\T)$ and $m(z,\T)$ for small values of $z$, and when \blue{$v_0=0.53$}, there are larger fluctuations around the carrying capacity, which means that the process reaches these lower population sizes more frequently (see Figure \ref{oooooh}). \blue{The resulting differences in the estimators $\widehat{\vc\theta}_n $ and $\widetilde{\vc\theta}_n $ can be quantified using Proposition \ref{thm:Q-consistency-WLSE-1}, from which we obtain the conditional limits $(\tilde{K},\tilde{v})=(25.018,0.7019)$ and $(\tilde{K},\tilde{v})=(30.79,0.5561)$, which should be compared to the true parameter values in the respective models, $(K_0, v_0) =(25,0.7)$ and $(K_0, v_0) =(25,0.53)$. For a more detailed discussion, we refer to Section~\ref{sec:color_map} in  Appendix~\ref{appB}. 
}

\begin{figure}
\centering\includegraphics[width=0.3\textwidth]{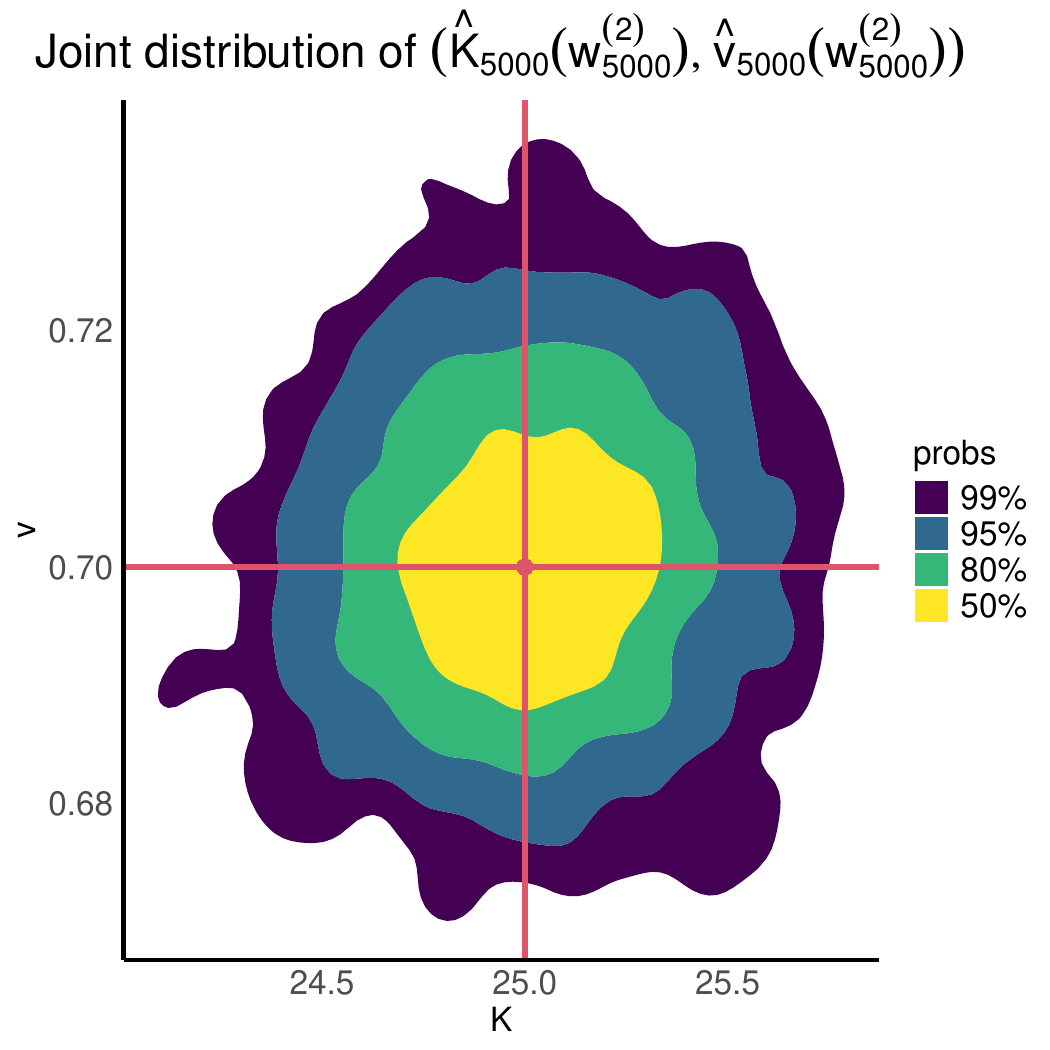}\hspace{1em}\includegraphics[width=0.3\textwidth]{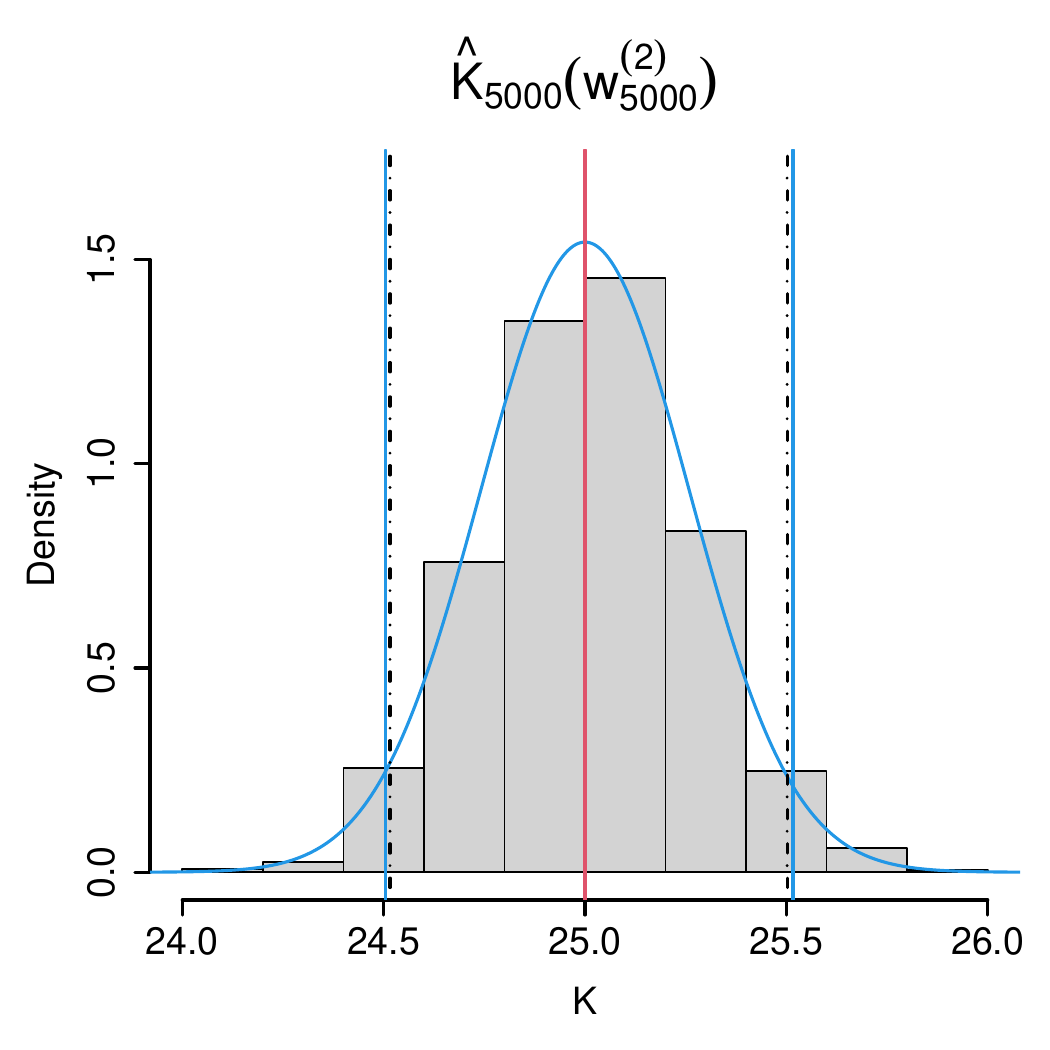}\hspace{1em}
\includegraphics[width=0.3\textwidth]{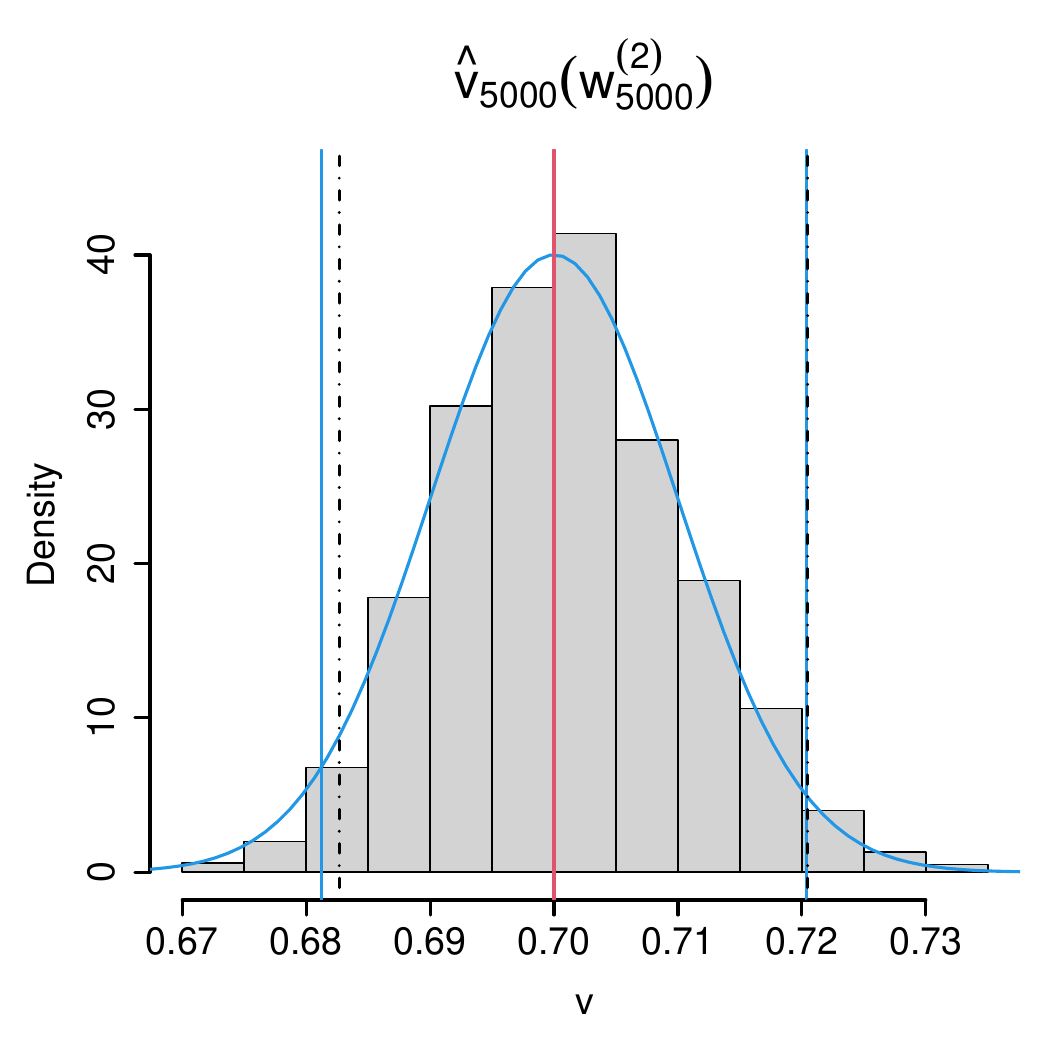}\\
\centering\includegraphics[width=0.3\textwidth]{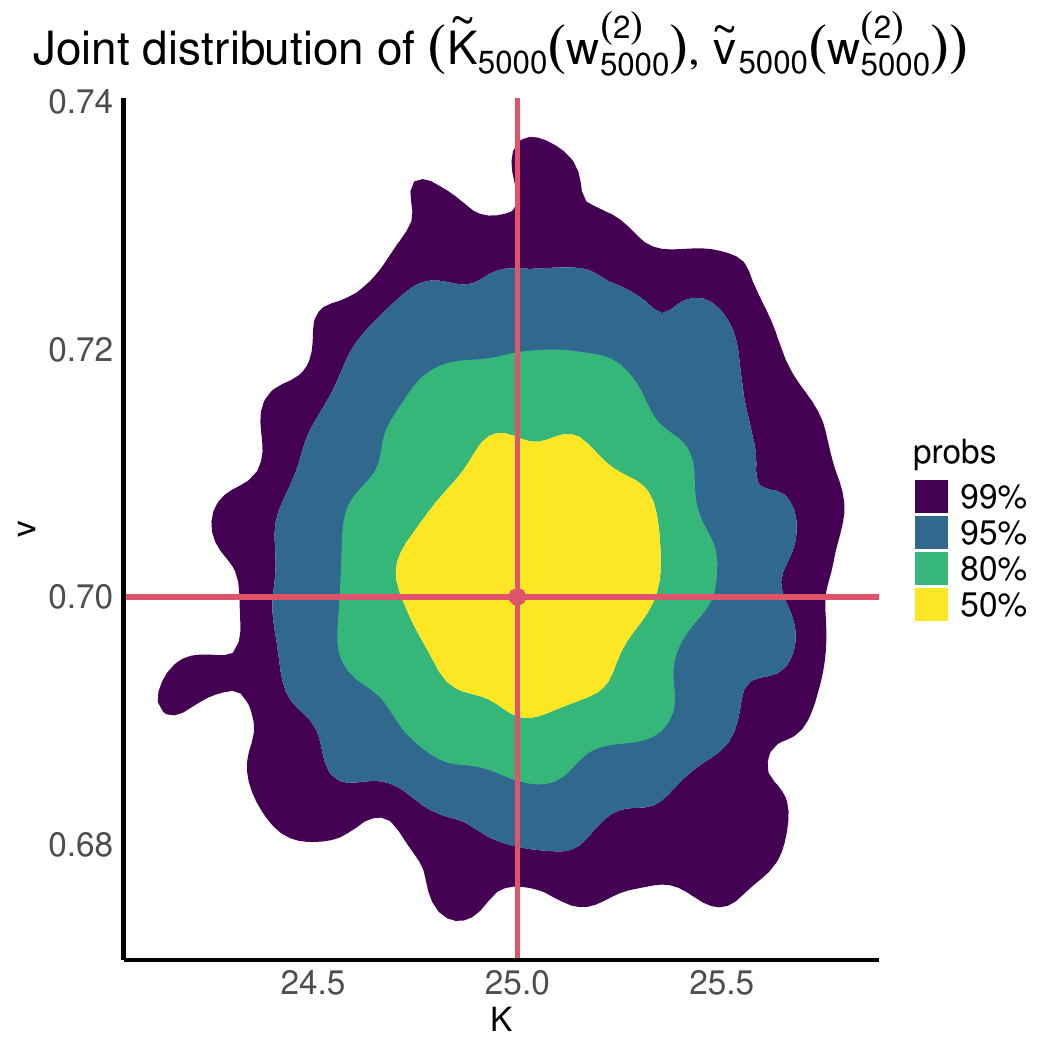}\hspace{1em}
\includegraphics[width=0.3\textwidth]{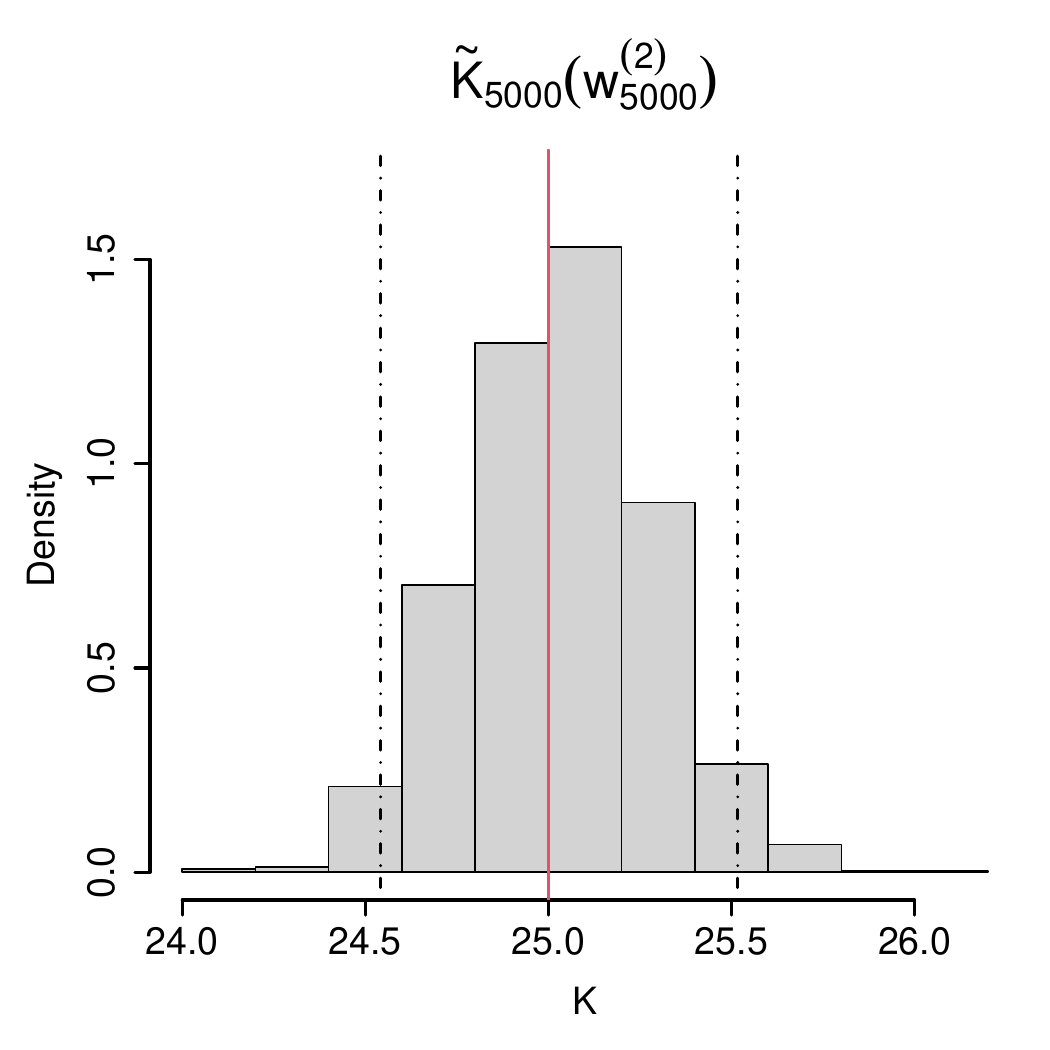}\hspace{1em}
\includegraphics[width=0.3\textwidth]{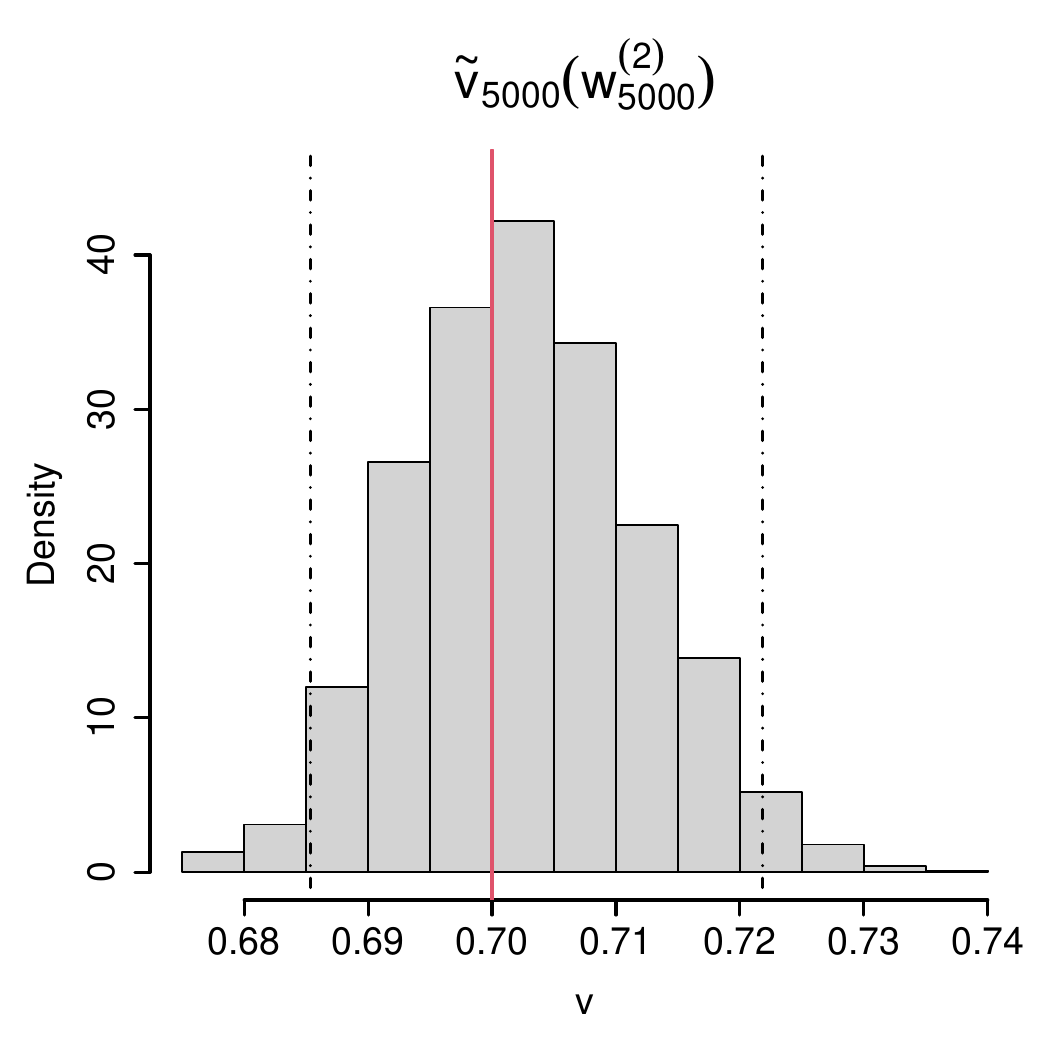}\\
\caption{\blue{\textsf{Case $(K_0,v_0)=(25,0.7)$}. Left: contour plot of the joint distribution of the estimator for $(K,v)$. Centre: marginal distribution of the estimator of $K$, with the empirical and theoretical marginal 95\% confidence intervals. Right: marginal distribution of the estimator of $v$, with the empirical and theoretical marginal 95\% confidence intervals.}\label{booboo}}
\end{figure}

\begin{figure}
\centering\includegraphics[width=0.3\textwidth]{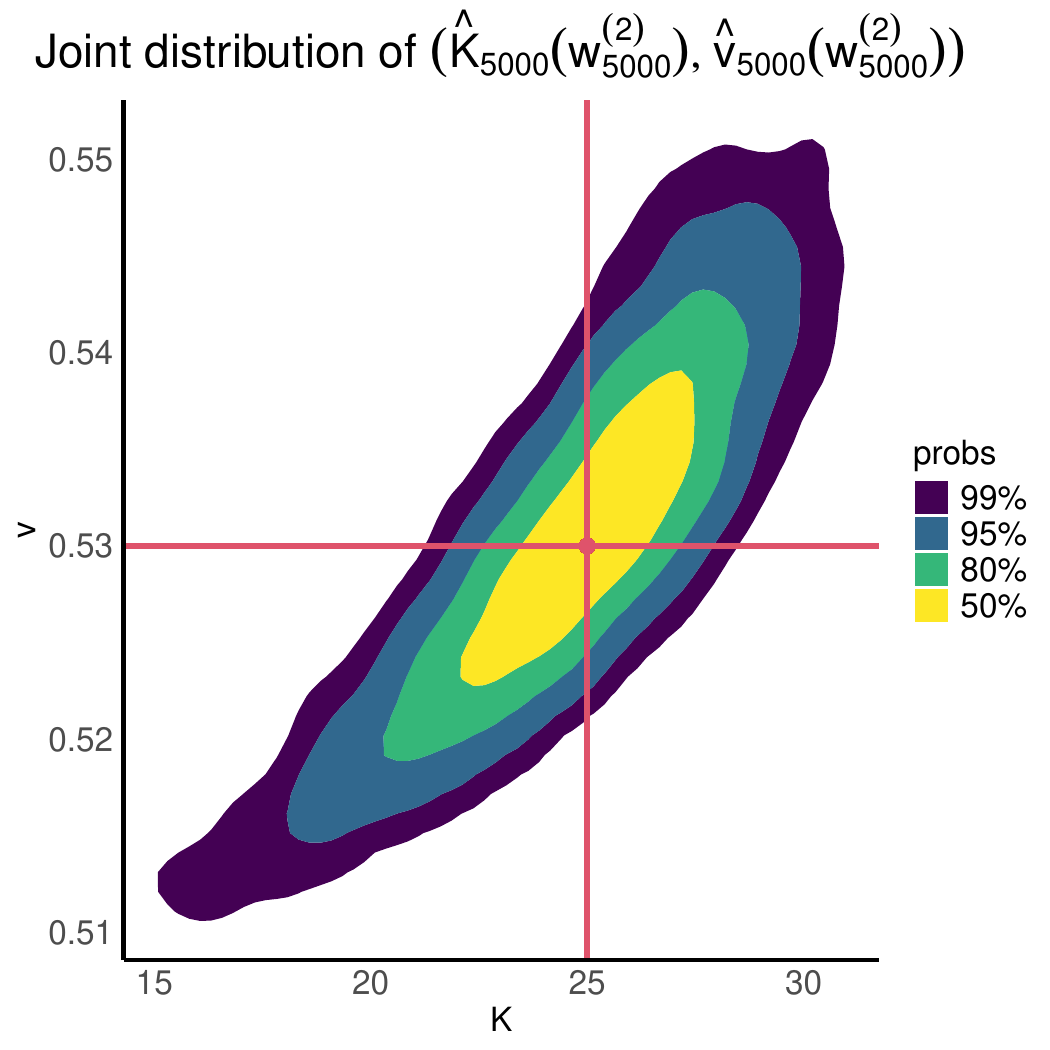}\hspace{1em}\includegraphics[width=0.3\textwidth]{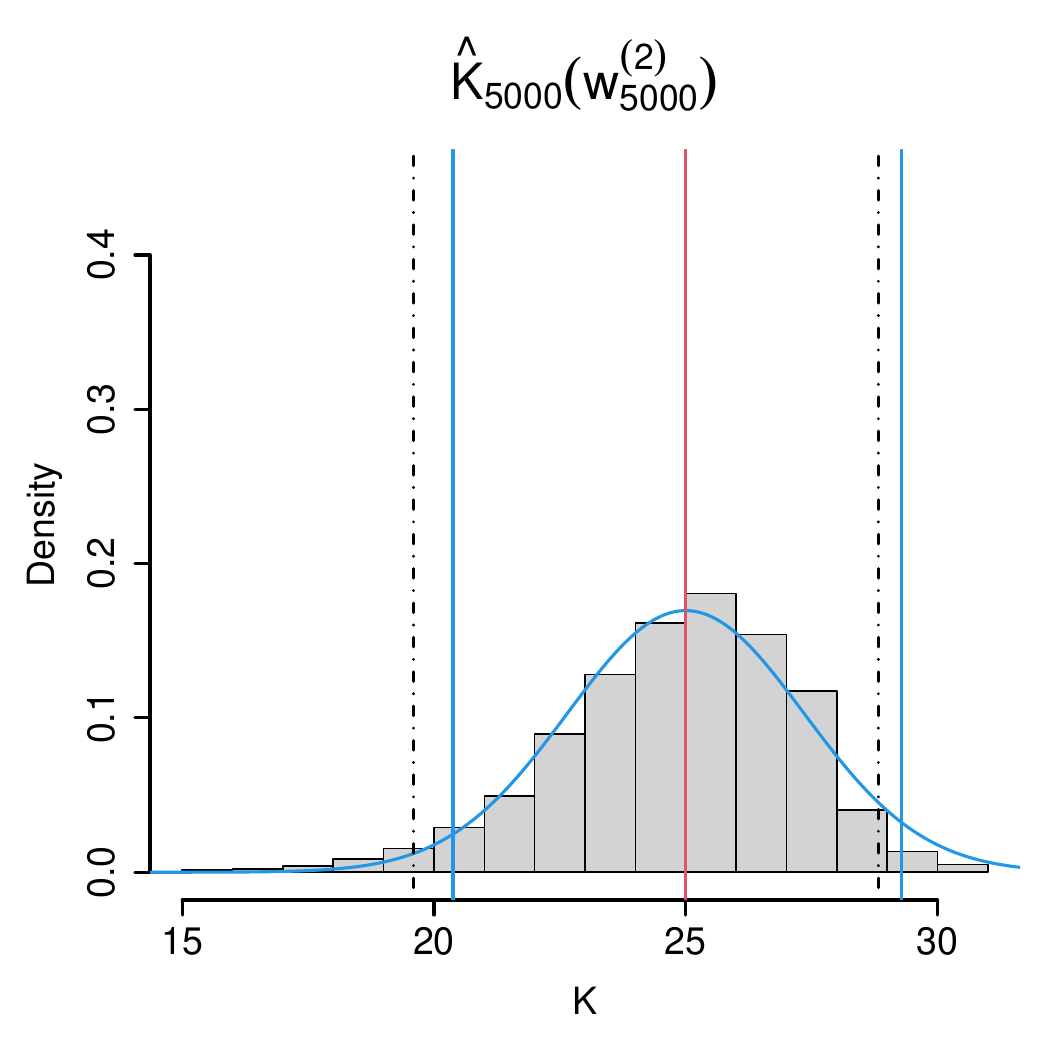}\hspace{1em}
\includegraphics[width=0.3\textwidth]{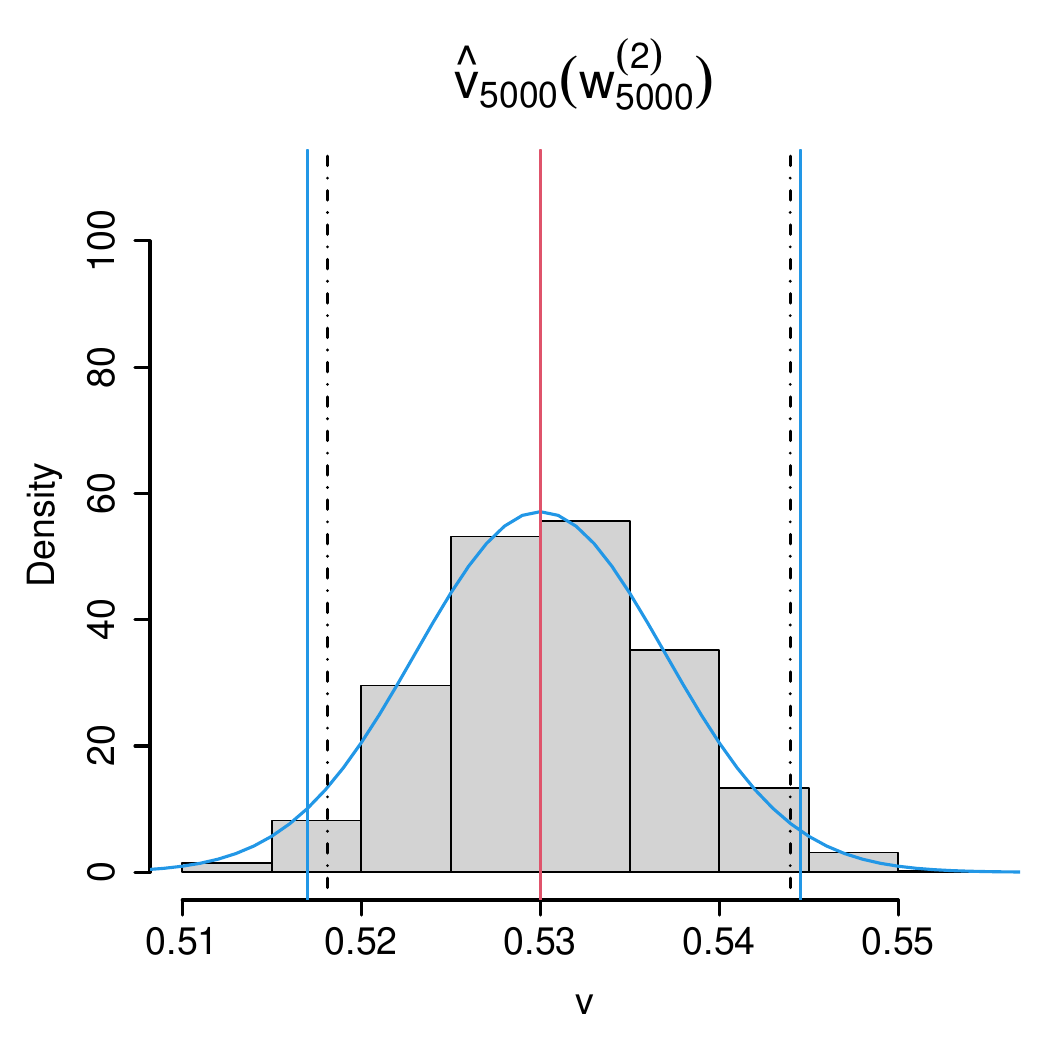}\\
\centering\includegraphics[width=0.3\textwidth]{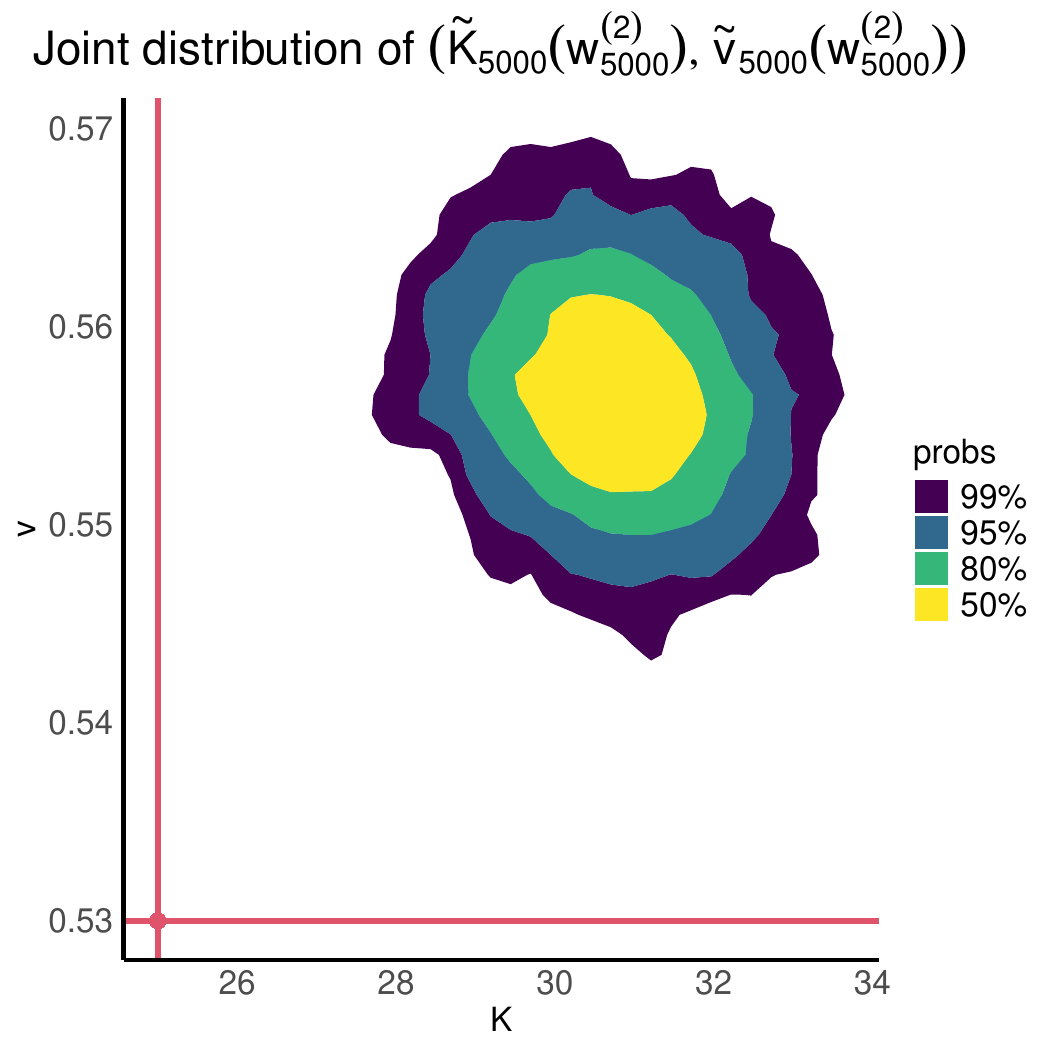}\hspace{1em}\includegraphics[width=0.3\textwidth]{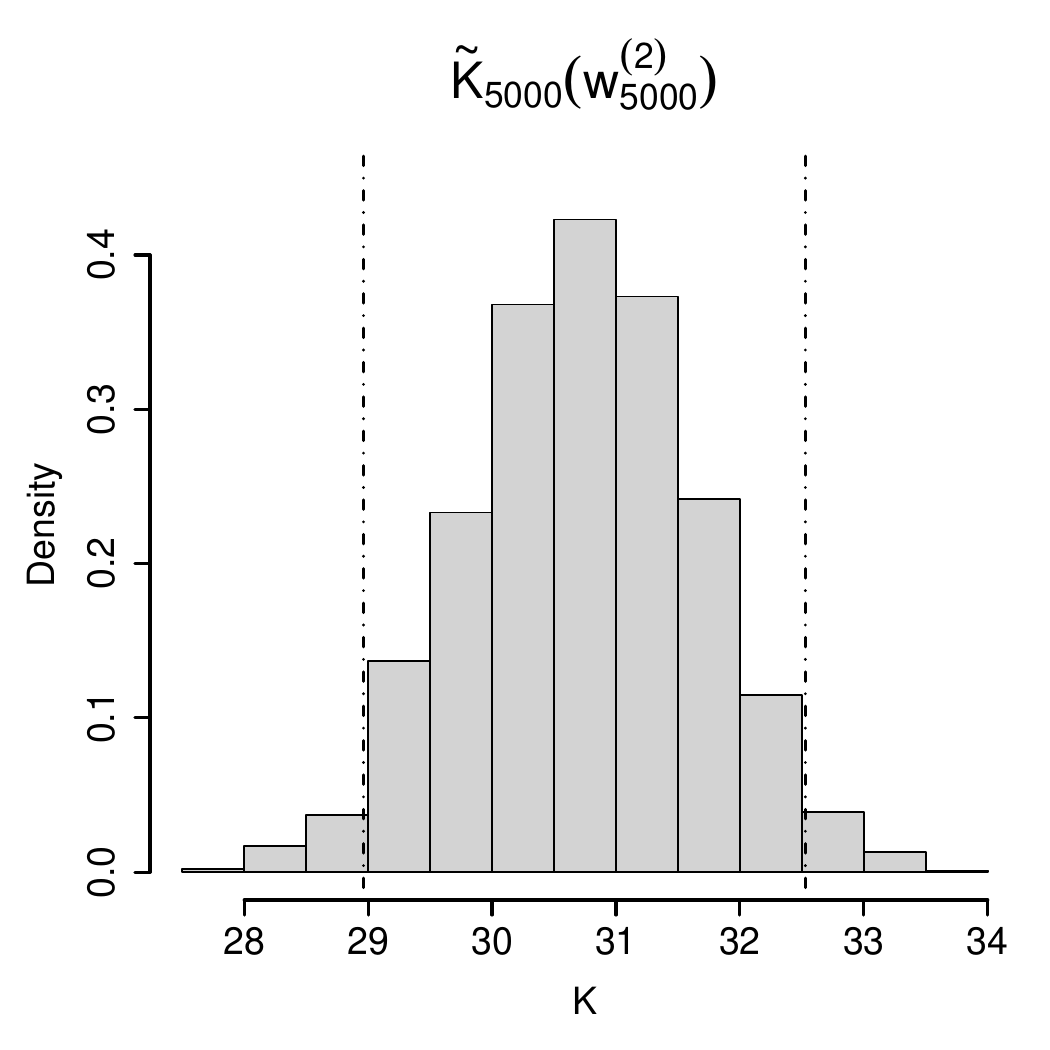}\hspace{1em}
\includegraphics[width=0.3\textwidth]{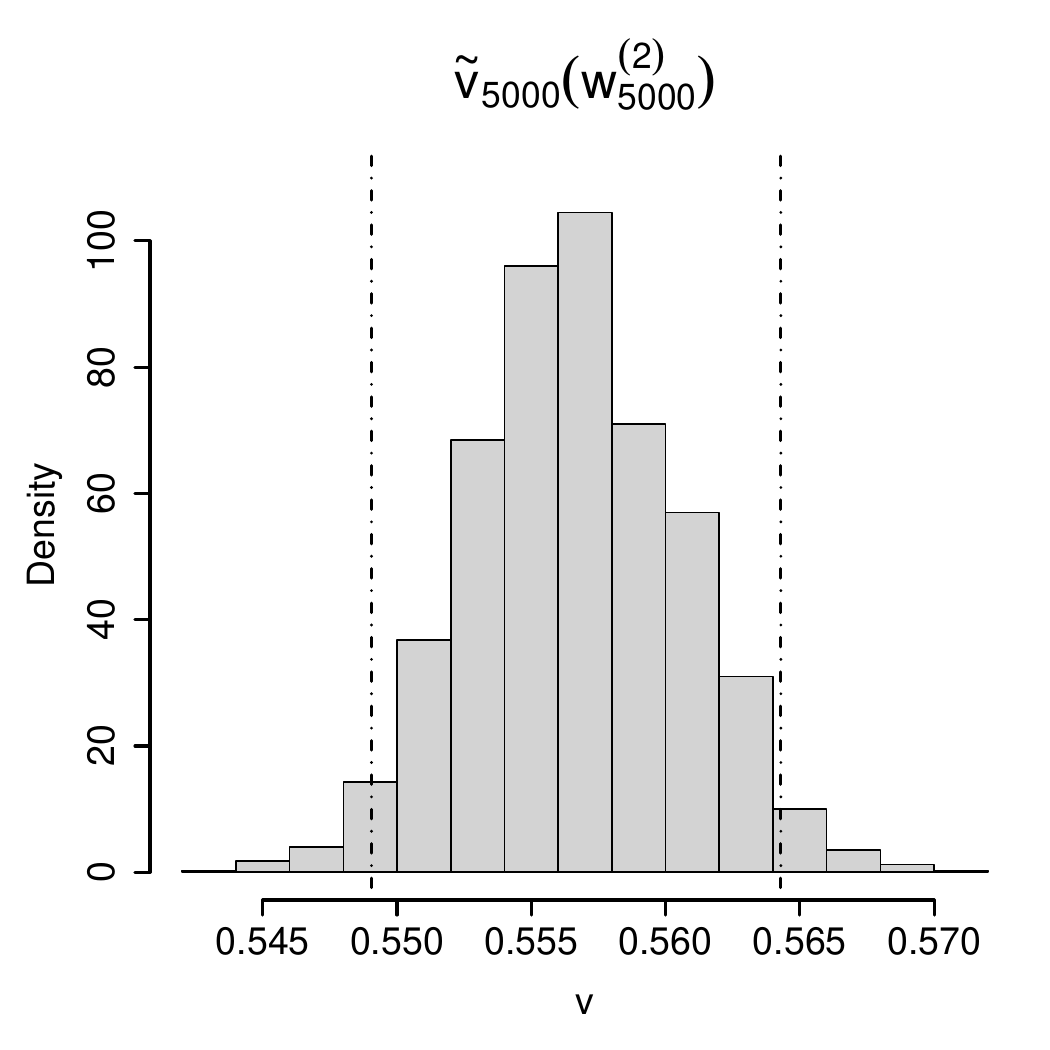}\\
\caption{\blue{\textsf{Case $(K_0,v_0)=(25,0.53)$}. Left: contour plot of the joint distribution of the estimator for $(K,v)$. Centre: marginal distribution of the estimator of $K$, with the empirical and theoretical marginal 95\% confidence intervals. Right: marginal distribution of the estimator of $v$, with the empirical and theoretical marginal 95\% confidence intervals. \label{miawiaw}}}
\end{figure}

\begin{table} \centering
\scalebox{0.75}{\blue{\begin{tabular}{|cc|cc|cc|cc|cc|}
\cline{3-10}
\multicolumn{2}{c|}{}&\multicolumn{4}{c|}{$(K_0, v_0) =(25,0.7)$}&\multicolumn{4}{c|}{$(K_0, v_0) =(25,0.53)$}\\[0.5ex]\cline{3-10}
\multicolumn{2}{c|}{} &  $\hat{K}_{n}(\boldsymbol{w}_{n}^{(2)})$&$\tilde{K}_{n}(\boldsymbol{w}_{n}^{(2)})$&$\hat{v}_{n}(\boldsymbol{w}_{n}^{(2)})$&$\tilde{v}_{n}(\boldsymbol{w}_{n}^{(2)})$&$\hat{K}_{n}(\boldsymbol{w}_{n}^{(2)})$&$\tilde{K}_{n}(\boldsymbol{w}_{n}^{(2)})$&$\hat{v}_{n}(\boldsymbol{w}_{n}^{(2)})$&$\tilde{v}_{n}(\boldsymbol{w}_{n}^{(2)})$\\[0.5ex]
\hline
\multirow{4}{*}{$n=50$} & Median & $24.749$ & $24.787$ & $0.750$ & $0.751$ & $23.486$ & $25.641$ & $0.570$ & $0.590$ \\
& Mean  & $24.573$ & $24.702$ & $0.758$ & $0.761$ & $23.639$ & $26.536$ & $0.569$ & $0.589$ \\ 
& SD   & $2.844$ & $2.665$ & $0.098$ & $0.095$ & $13.506$ & $10.645$ & $0.040$ & $0.034$ \\  
& RMSE   & $0.013$ & $0.012$ & $0.027$ & $0.026$ & $0.295$ & $0.185$ & $0.011$ & $0.016$ \\     
\hline
\multirow{4}{*}{$n=500$} & Median & $25.008$ & $25.036$ & $0.704$ & $0.706$ & $25.447$ & $30.484$ & $0.536$ & $0.560$ \\
& Mean   & $25.002$ & $25.027$ & $0.707$ & $0.709$ & $24.341$ & $30.445$ & $0.537$ & $0.561$ \\ 
& SD   & $0.802$ & $0.796$ & $0.034$ & $0.033$ & $6.189$ & $2.893$ & $0.019$ & $0.013$ \\ 
& RMSE   & $0.001$ & $0.001$ & $0.002$ & $0.002$ & $0.062$ & $0.061$ & $0.001$ & $0.004$ \\   
\hline
\multirow{4}{*}{$n=5000$} & Median & $25.015$ & $25.033$ & $0.701$ & $0.702$ & $25.062$ & $30.740$ & $0.531$ & $0.557$ \\  
& Mean   & $25.011$ & $25.029$ & $0.701$ & $0.703$ & $24.834$ & $30.734$ & $0.531$ & $0.557$ \\ 
& SD   & $0.254$ & $0.253$ & $0.010$ & $0.009$ & $2.320$ & $0.927$ & $0.007$ & $0.004$ \\    
& RSME   & $0.0001$ & $0.0001$ & $0.0002$ & $0.0002$ & $0.009$ & $0.054$ & $0.0002$ & $0.003$ \\   
\hline         
\end{tabular}}}
  \caption{\textsf{Beverton-Holt model with \blue{$(K_0, v_0) =(25,0.7)$} and \blue{$(K_0, v_0) =(25,0.53)$}.} Median, mean, standard deviation (SD) and relative mean squared error (RMSE) of the estimates obtained with each of the estimators.\label{tab:summary-stationary-Kv-K50-mu1.4}}
\end{table}

\subsection{Growing populations}\label{growing_pop}

\blue{We now compare the estimators $\widehat{\vc \theta}_n$ and
$\widetilde{\vc\theta}_n$ in the challenging ---but practically important (see Section~\ref{sec:br})--- setting where the observed population size counts are
well below the carrying capacity.
We consider the same Beverton-Holt model as in Section \ref{qsp}
with parameters $\T_0=(K_0, v_0) = (100,0.6)$.
With these parameters, Proposition~\ref{thm:Q-consistency-WLSE-1} leads to $(\tilde{K},\tilde{v})=(99.8,0.6018)$, which suggests that there would not be a large difference between  $\widehat{\vc \theta}_n$ and $\widetilde{\vc\theta}_n$ for large values of $n$. However, even though the asymptotic difference would be negligible, we will highlight that for small initial population sizes and moderate values of $n$, the $C$-consistent estimator $\widehat{\vc \theta}_n$ is noticeably less biased than $\widetilde{\vc\theta}_n$.}

We fix the initial population size at $N=2$,
and simulate $N_0=1000$ trajectories of the process for \blue{$n=0,1,\ldots,35$} time units such that $Z_n>0$.
In Figure~\ref{fig:ex3:paths} we illustrate the sample mean of the trajectories (left) and the empirical distribution of $Z_{35}$ (right). Observe that the sample mean of $Z_{35}$ is well below the carrying capacity $K_0=100$ and the sample variance  is large.

\begin{figure}
\centering\includegraphics[width=0.37\textwidth]{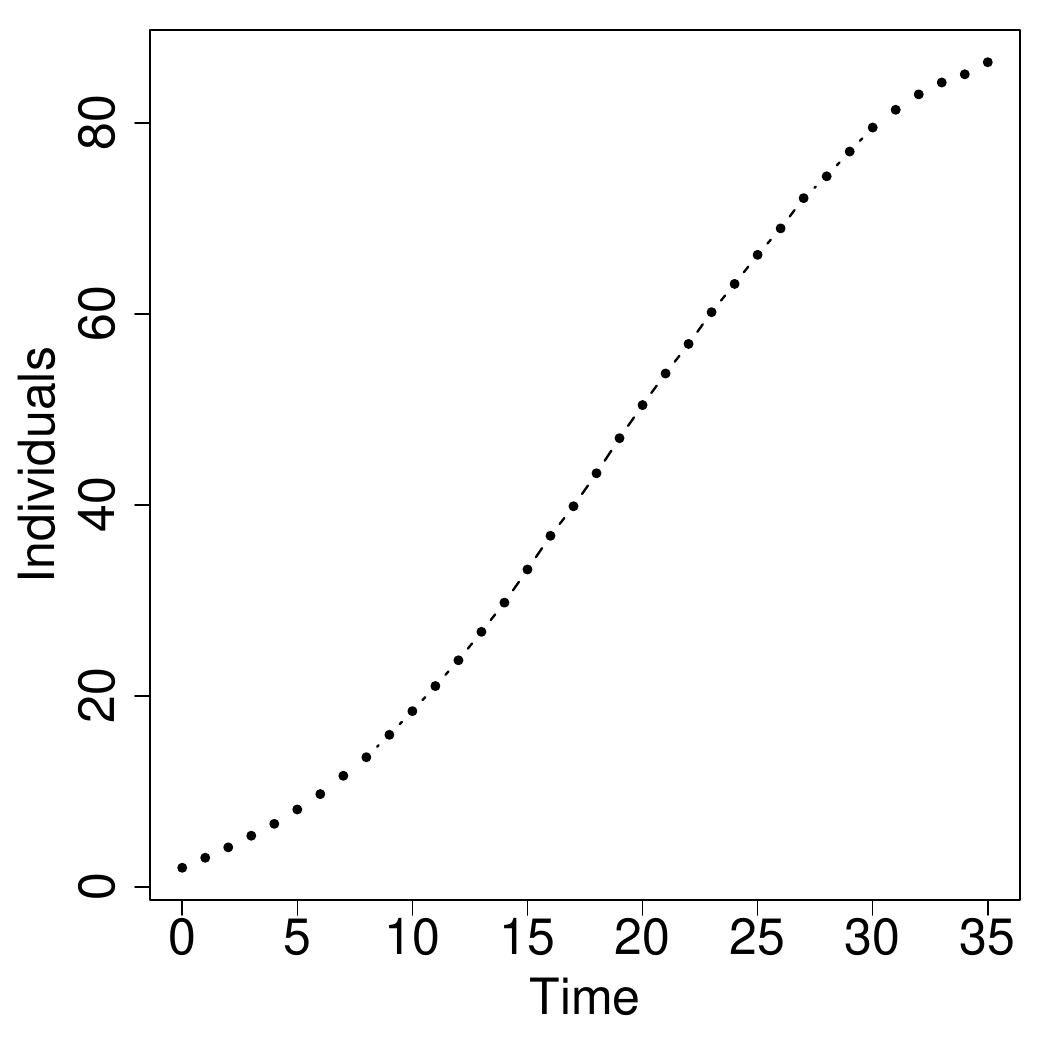}
\hspace{0.5cm}\includegraphics[width=0.37\textwidth]{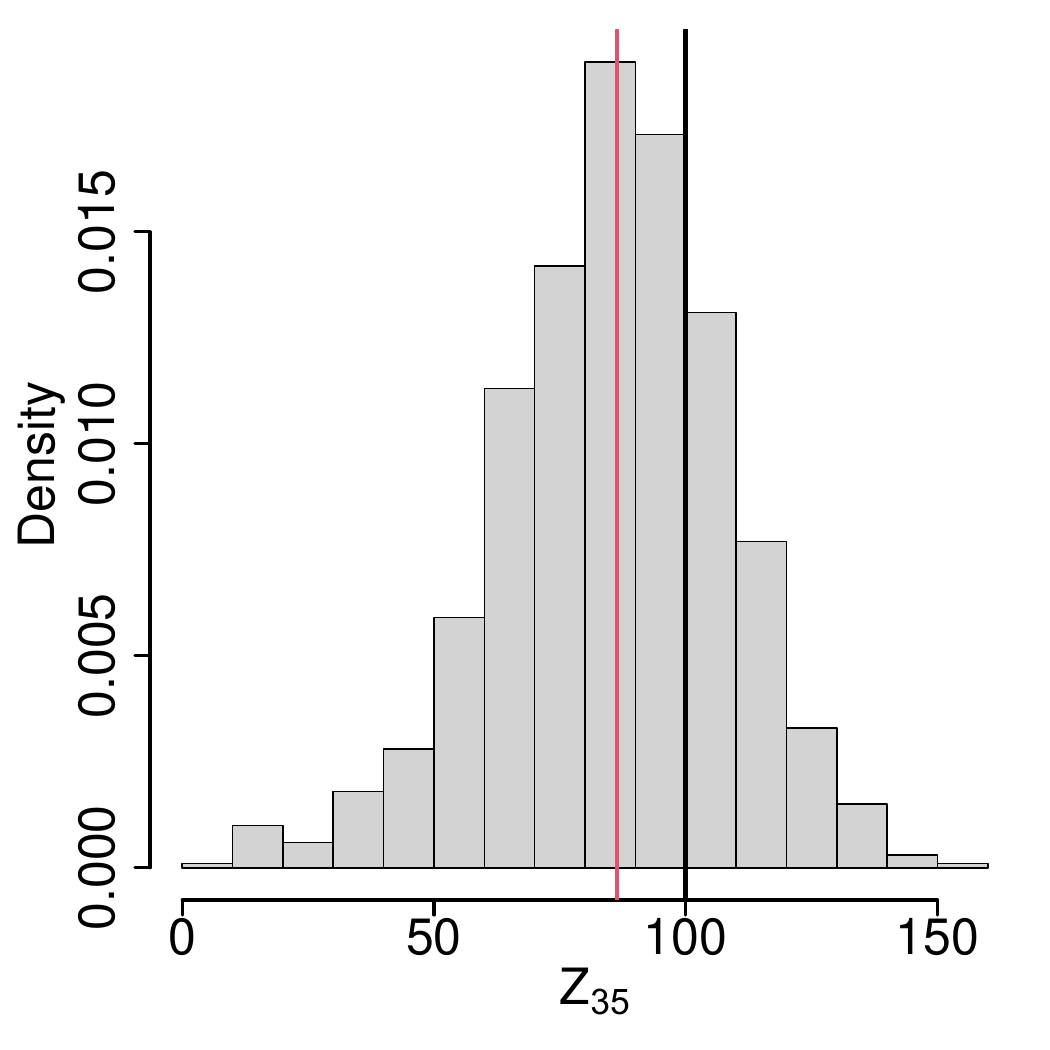}
\caption{\blue{\textsf{Beverton-Holt model with $(K_0, v_0) =(100,0.6)$}. Left: Mean path of the 1000 simulated trajectories.  Right: Histogram of $Z_{35}$; the red vertical line is the sample mean and the black vertical line is $K_0=100$.}\label{fig:ex3:paths}}
\end{figure}

\begin{table} \centering
\scalebox{0.75}{\blue{\begin{tabular}{|c|c|c|c|c|c|c|c|c|c|c|c|c|}
\cline{2-13}
\multicolumn{1}{c|}{} & \multicolumn{4}{|c|}{$n=15$}& \multicolumn{4}{|c|}{$n=25$}& \multicolumn{4}{|c|}{$n=35$}\\
\hline
Method & Mean & Median & SD & RMSE & Mean & Median & SD & RMSE & Mean & Median & SD & RMSE \\
\hline
$\hat{K}_{n}(\boldsymbol{w}_{n}^{(2)})$  & $71.840$ & $44.666$ & $79.256$ & $0.707$ & $98.100$ & $84.808$ & $60.994$ & $0.372$ & $101.010$ & $94.134$ & $41.196$ & $0.170$ \\
$\tilde{K}_{n}(\boldsymbol{w}_{n}^{(2)})$& $58.839$ & $38.670$ & $64.449$ & $0.584$ & $88.171$ & $79.864$ & $51.041$ & $0.274$ & $96.268$ & $92.885$ & $34.330$ & $0.119$ \\
$\hat{K}_{n}(\boldsymbol{w}_{n}^{(1)})$  & $67.565$ & $41.946$ & $72.407$ & $0.629$ & $97.433$ & $84.563$ & $60.504$ & $0.366$ & $103.047$ & $95.190$ & $44.320$ & $0.197$ \\
$\tilde{K}_{n}(\boldsymbol{w}_{n}^{(1)})$& $40.882$ & $32.073$ & $35.523$ & $0.476$ & $69.206$ & $68.855$ & $32.046$ & $0.197$ & $84.328$ & $84.594$ & $22.993$ & $0.077$ \\ [+0.7em]\hline
$\hat{v}_{n}(\boldsymbol{w}_{n}^{(2)})$  & $0.638$ & $0.639$ & $0.075$ & $0.020$ & $0.628$ & $0.628$ & $0.049$ & $0.009$ & $0.622$ & $0.619$ & $0.042$ & $0.006$ \\
$\tilde{v}_{n}(\boldsymbol{w}_{n}^{(2)})$& $0.670$ & $0.670$ & $0.056$ & $0.022$ & $0.643$ & $0.640$ & $0.040$ & $0.010$ & $0.631$ & $0.627$ & $0.037$ & $0.007$ \\
$\hat{v}_{n}(\boldsymbol{w}_{n}^{(1)})$  & $0.650$ & $0.652$ & $0.088$ & $0.029$ & $0.630$ & $0.625$ & $0.062$ & $0.013$ & $0.622$ & $0.617$ & $0.055$ & $0.010$ \\
$\tilde{v}_{n}(\boldsymbol{w}_{n}^{(1)})$& $0.720$ & $0.719$ & $0.060$ & $0.050$ & $0.683$ & $0.677$ & $0.047$ & $0.025$ & $0.667$ & $0.659$ & $0.045$ & $0.018$ \\
\hline
\end{tabular} }}
\caption{\blue{\textsf{Beverton-Holt model with $(K_0, v_0) =(100,0.6)$}. Mean, median, standard deviation (SD) and relative mean squared error (RSME) of the estimates for $K$ and $v$ using the $C$-consistent estimator $\widehat{\vc \theta}_n=(\hat{K}_{n},\hat{v}_{n})$ in \eqref{Asss2} and its counterpart $\widetilde{\vc\theta}_n=(\tilde{K}_{n},\tilde{v}_{n})$ in \eqref{theta_tilde} with weighting functions $\boldsymbol{w}_{n}^{(1)}$ and $\boldsymbol{w}_{n}^{(2)}$ defined in \eqref{weights_def}, for $n=15,25,35$.}\label{tab:growing-Kv-K-summary-bh}}
\end{table}

\begin{figure}[H]
\centering\includegraphics[width=0.42\textwidth]{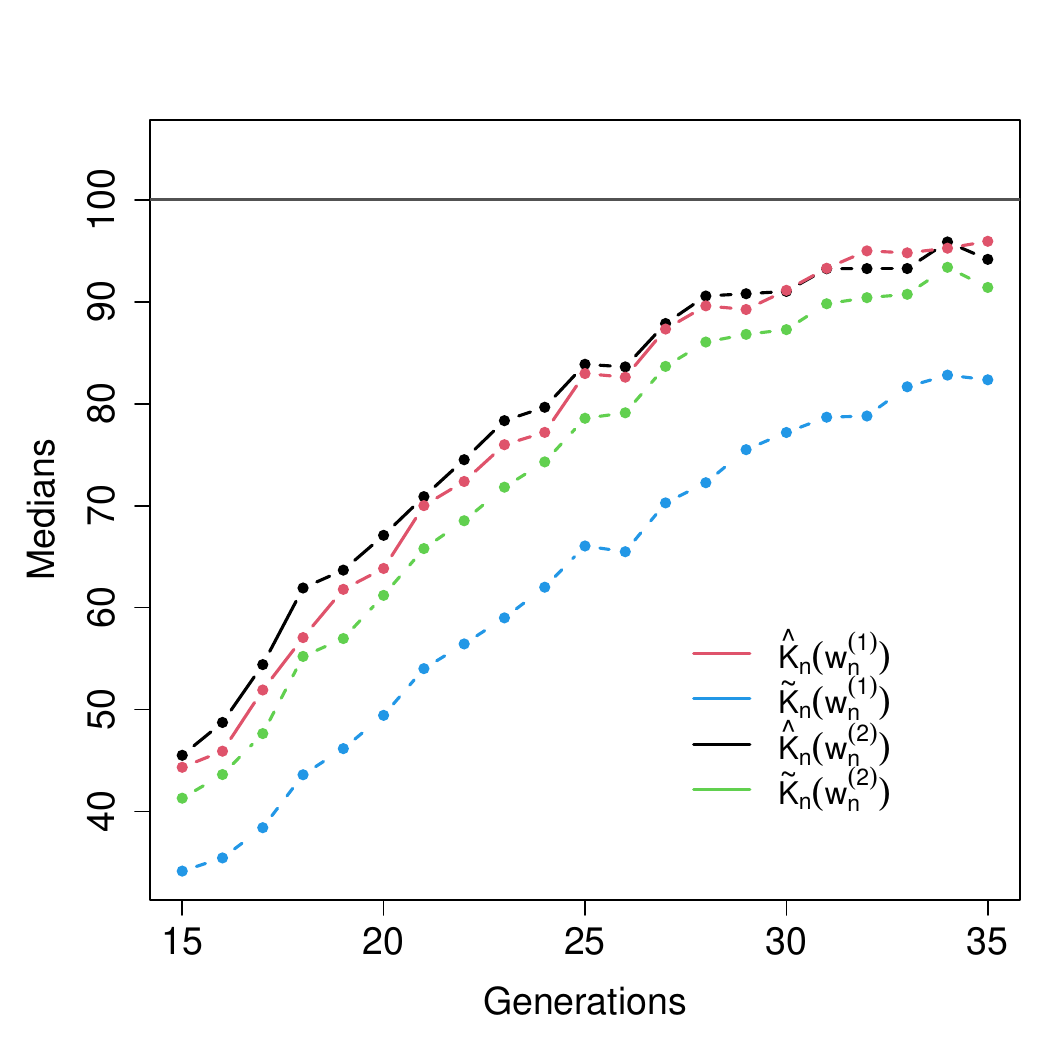}
\hspace{0.3cm}\includegraphics[width=0.42\textwidth]{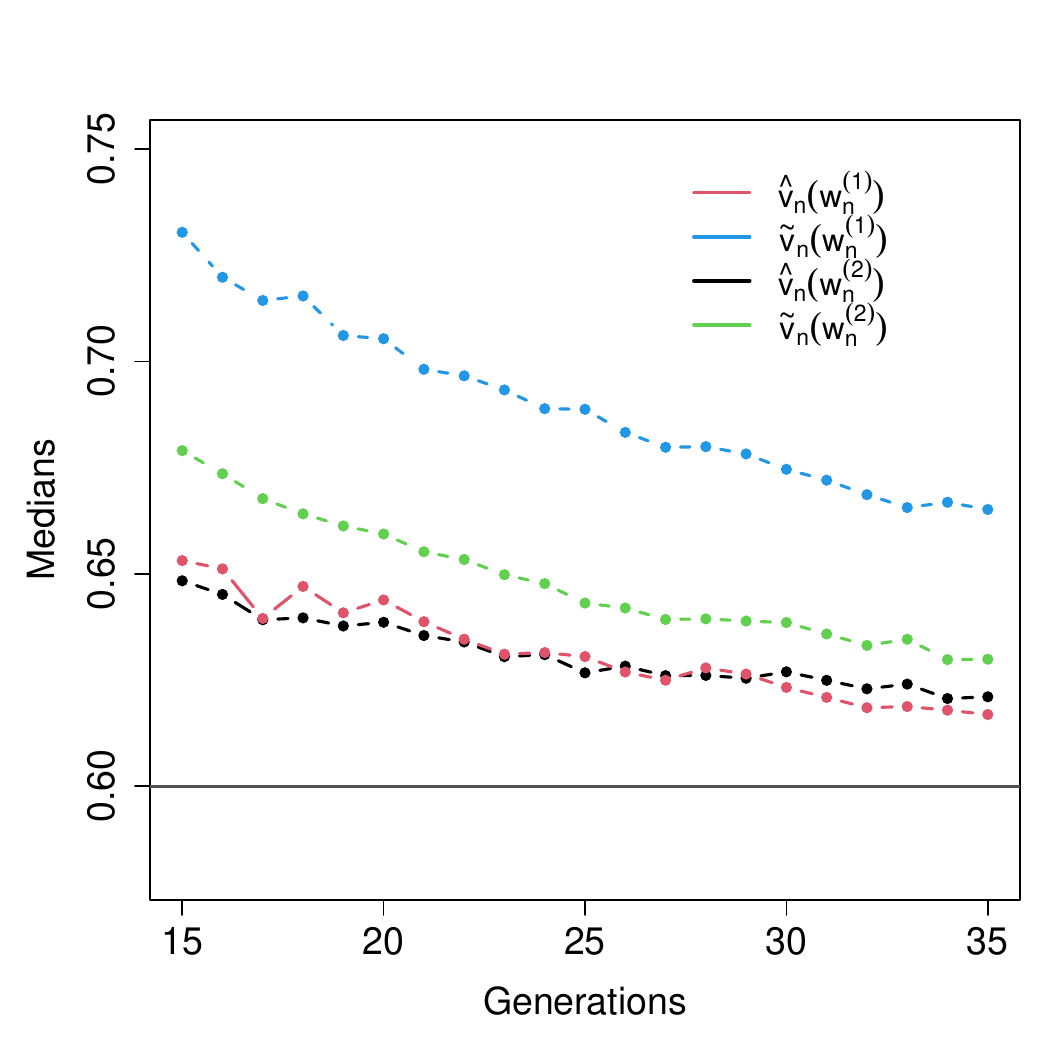}
\caption{\blue{\textsf{Beverton-Holt model with $(K_0, v_0) =(100,0.6)$}. Sample medians of $\widehat{\vc \theta}_n$ and $\widetilde{\vc\theta}_n$ with the weights $\boldsymbol{w}_{n}^{(1)}$ and $\boldsymbol{w}_{n}^{(2)}$. Horizontal lines represent the true parameter value. } \label{fig:MERIDA_RULES!!!}}% I LOVE THE LABEL ;)
\end{figure}

In Table \ref{tab:growing-Kv-K-summary-bh}, we provide the mean, median, standard deviation (SD) and relative mean squared error (RSME) of the estimates for $K$ and $v$ using $\widehat{\vc \theta}_n=(\hat{K}_{n},\hat{v}_{n})$ and $\widetilde{\vc\theta}_n=(\tilde{K}_{n},\tilde{v}_{n})$  with weighting functions $\boldsymbol{w}_{n}^{(1)}$ and $\boldsymbol{w}_{n}^{(2)}$ defined in \eqref{weights_def}, for \blue{$n=15,25,35$}.
In line with Remark \ref{rem:w2}, we observe that the weighting function $\boldsymbol{w}_{n}^{(2)}$ generally outperforms $\boldsymbol{w}_{n}^{(1)}$.
In addition, for all $n$, and for both $K$ and $v$, the mean and median of $\widehat{\vc \theta}_n$ are closer to the true value $\T_0= (100,0.6)$ than the counterpart $\widetilde{\vc\theta}_n$ for any fixed weighting function; this illustrates that $\widehat{\vc \theta}_n$ has a smaller bias than $\widetilde{\vc\theta}_n$.
\blue{This is further illustrated in Figure~\ref{fig:MERIDA_RULES!!!} which displays the empirical medians of $\widehat{\vc \theta}_n$ and $\widetilde{\vc\theta}_n$ for $n=15, 16,\ldots,35$.
A box plot of $\widehat{\vc \theta}_n$ and $\widetilde{\vc\theta}_n$ for $n=15,25,35$ is displayed in Figure~\ref{bias_corr} of Appendix~\ref{appB}, where we observe that, for $\boldsymbol{w}_{n}^{(2)}$, the true value of $K$ and $v$ generally lies between the $1$st and $3$rd quantiles of the empirical distribution of $\widehat{\vc \theta}_n$.
Even though $\widehat{\vc \theta}_n$ is less biased than $\tilde{\vc \theta}_n$, Figure~\ref{fig:MERIDA_RULES!!!} indicates that $\widehat{\vc \theta}_n$ is still biased, especially for small values of $n$.
%The simulations indicate that the estimators $\widehat{\vc \theta}_n$ and $\tilde{\vc \theta}_n$ are biased for any fixed values of $n$.
This is a standard feature of estimators for branching processes, particularly for growing populations; see for instance \cite[Chapter 2, Section 2]{Guttorp-2014}. In Section~\ref{subsec:bias-correction} of Appendix~\ref{appB} we apply a bias correction based on bootstrap techniques, however this did not lead to a significant bias improvement for $K$.
}

\blue{Another way to justify $\widehat{\vc \theta}_n$ for growing trajectories of moderate length is to
 consider a sample made of a large number of independent trajectories (as opposed to a single trajectory). Suppose our sample contains \blue{$10^3$}
 independent trajectories of length $n=15,16,\ldots,35$. In this case, the estimators $\widehat{\vc \theta}_n$ and $\widetilde{\vc\theta}_n$ are the same as before with the MLE for $m(z)$ given by
\begin{equation}\label{lump}\hat{m}_{n}(z)=\frac{\sum_{j=1}^{\blue{10^3}}\sum_{i=0}^{n-1} Z_{j,i+1}\ind{Z_{j,i}=z}}{z\,\sum_{j=1}^{\blue{10^3}}\sum_{i=0}^{n-1}\ind{Z_{j,i}=z}},\end{equation}
where $Z_{j,i}$ is the population size at time $i$ in the $j$th trajectory, and the weighting functions $\boldsymbol{w}_{n}^{(1)}$ and $\boldsymbol{w}_{n}^{(2)}$ are adjusted similarly.
We display the corresponding estimates in \blue{Figure~\ref{fig:accumulated-growing-Kv-K100-mu1.2-bh},}
which highlights that $\widehat{\vc \theta}_n$ significantly outperforms
$\widetilde{\vc\theta}_n$ with both weighting functions.

While the new class of estimators $\widehat{\vc \theta}_n$ relies on asymptotic properties ---because $m^\uparrow(z, \T_0)$ is the conditional limit as $n\to\infty$ of $\hat{m}_n(z)$, given $Z_n>0$--- the results of this section suggest they also perform well for moderate values of $n$.
This is because a PSDBP with a carrying capacity generally  dies rapidly or after a long time \cite{hamza2016establishment}. In our example, the values of $\mbP[Z_n>0]$  for $n=5, 15,25,35,1000$ are, respectively, $0.632,   0.548,    0.542,    0.538   , 0.523$ (approximately), highlighting the fact that $\mbP[Z_n>0]\approx \mbP[Z_{1000}>0]$ for $n\geq 15$.
 Consequently, trajectories that survive for a moderate time can be treated as trajectories that would not become extinct for a large time ---that is, trajectories of the $Q$-process.
This explains why subtracting ${m}^{\uparrow}(z, \T)$ in the least squares estimator generally reduces the bias, even for moderate values of $n$.

\begin{figure}
\centering\includegraphics[width=0.42\textwidth]{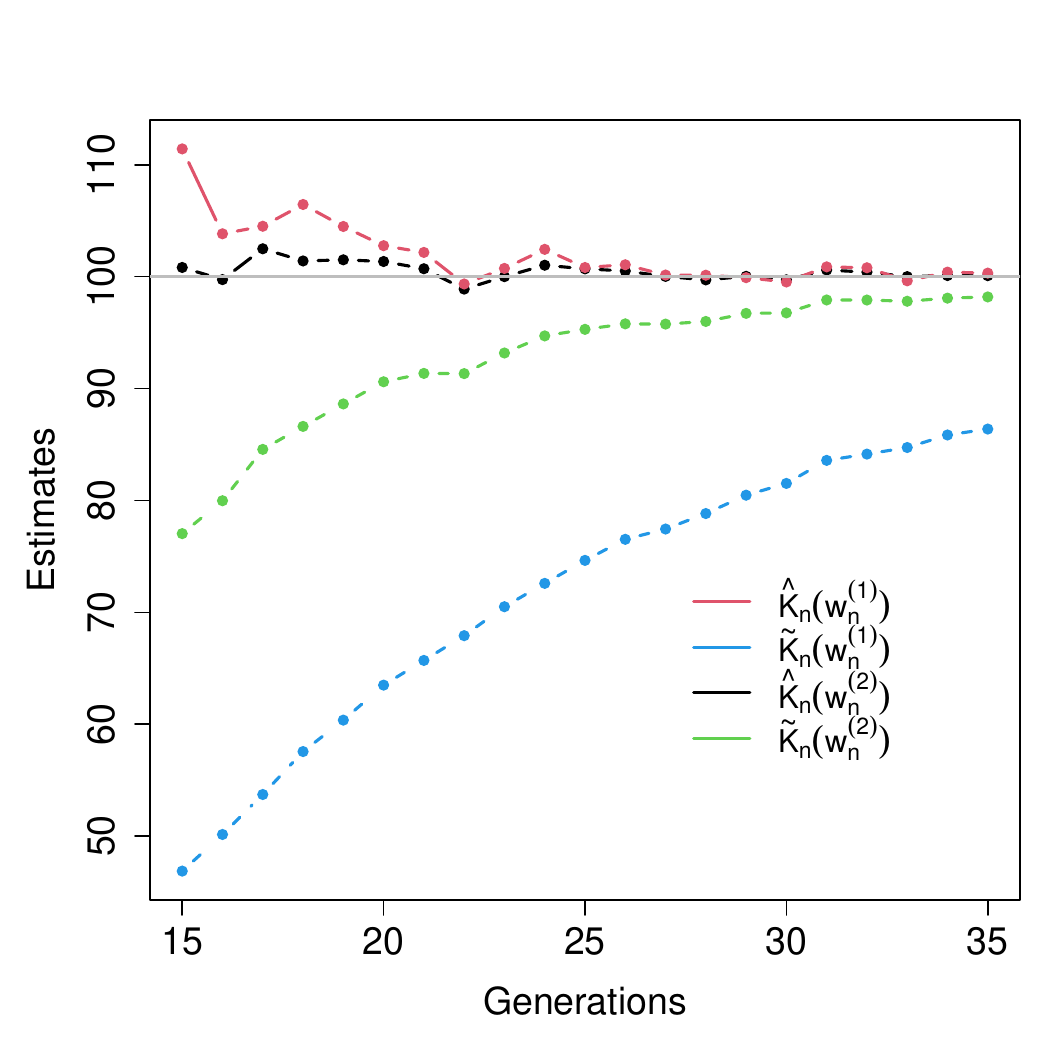}
\includegraphics[width=0.42\textwidth]{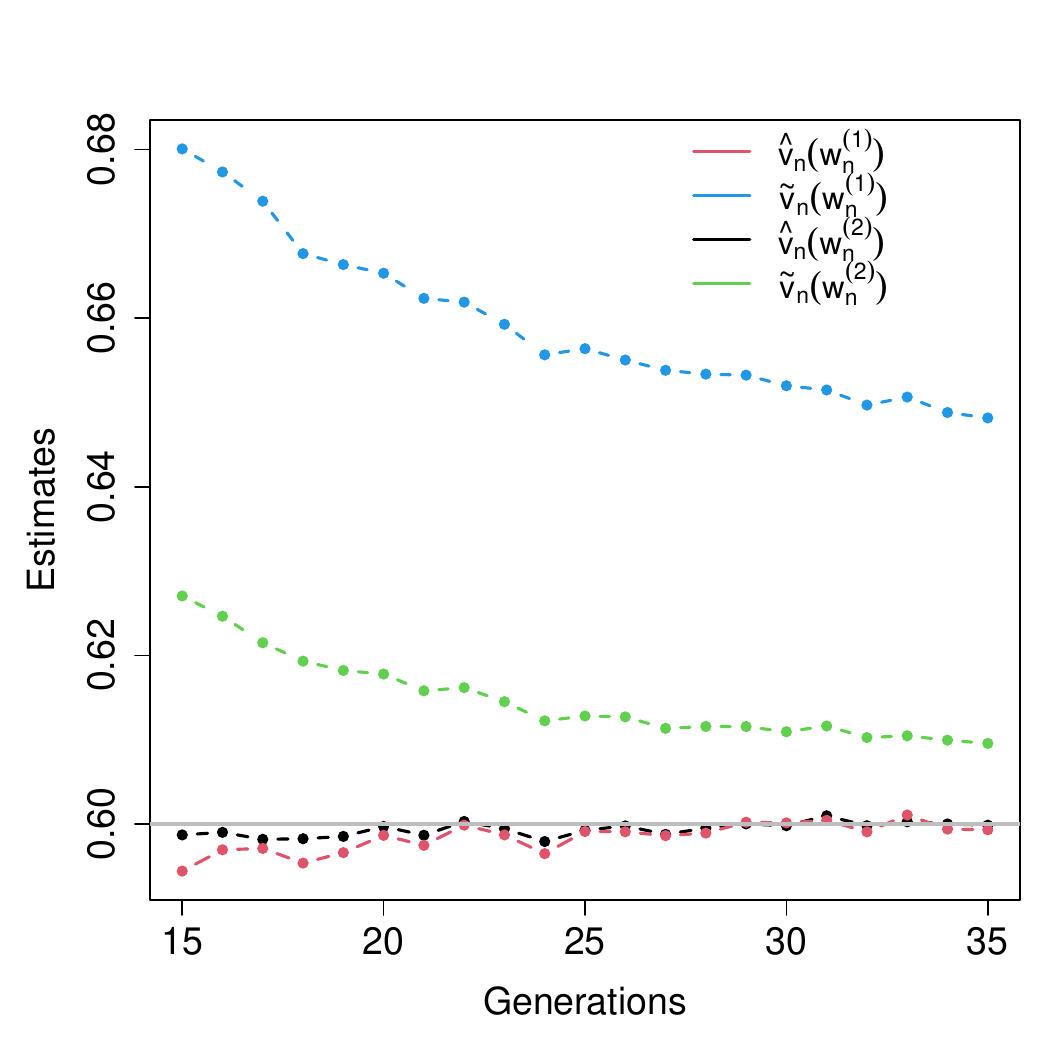}
  \caption{\textsf{Beverton-Holt model  with $(K_0, v_0) =(100,0.6)$}. Estimates obtained when \blue{$10^3$} independent trajectories are lumped.}
  \label{fig:accumulated-growing-Kv-K100-mu1.2-bh}
\end{figure}
}

\blue{
We observe that, while the $C$-consistent estimators are less biased than the classic estimators for any $n$, in our simulated examples they appear to have a larger variance. Although the smaller bias can be intuitively explained, it is more difficult to understand the larger variance and to determine whether this is a general phenomenon. A detailed comparison of the variance of the classic estimators and the $C$-consistent estimators ---both empirical and theoretical--- is a topic of future research.
}

\blue{We point out that the estimator $\widehat{\vc \theta}_n$ is designed to remove the bias caused by conditioning on $Z_n>0$; without this condition, we do not expect $\widehat{\vc \theta}_n$ to be advantageous compared to $\widetilde{\vc \theta}_n$. This is supported by the analysis in Section~\ref{subsec:conterfactual} of Appendix~\ref{appB}.

Finally, since the central limit theorem ---and thus the asymptotic normality of the estimators--- may not hold for moderate values of $n$, we may choose to use bootstrap techniques to derive confidence intervals for the parameters. This is discussed further in Section~\ref{subsec:boot_CI} of Appendix~\ref{appB}.}

\section{Application to the Chatham Island black robins}\label{sec:br}

We now consider the real-world application of the Chatham Island black robins who breed seasonally and compete for territory, and for which a discrete-time PSDBP is an appropriate model. We use a model slightly different from the one considered in Section~\ref{empirical}, 
adapted to the characteristics of the species, such as yearly nesting attempts, the number of offspring per successful nesting attempt, and bird lifetimes between 1970 and 2000 \cite{butlerblack,kennedy2013severe}.

In our model, individuals are birds who are at least one-year old and able to reproduce. For every mother bird in year $n$, the following events are assumed to happen in this specific order between year $n$ and year $n+1$:\begin{itemize}\item [(1)] the mother makes a successful attempt to reproduce with probability $r\equiv r(z,K,v)$ which depends on the current population size $z$, the carrying capacity $K$, and an `efficiency' parameter $v$;
 \item[(2)]
if a reproduction attempt is successful, the mother produces a random number of female offspring who survive to the following year, following a binomial distribution $\vc b=(b_k)$, $k\geq 0$, with parameters $n=5$ and $p=0.1988$ (fitted using the method of moments from reproduction data);
\item[(3)] the mother survives to the next year with probability $1-d=0.6861$ (estimated from lifetime data), which we assume is independent of the current population size and of whether she had a successful reproduction attempt.\end{itemize}
Taking into account that, if the mother survives, then she is counted among her progeny, the effective offspring distribution  is $\vc p=(p_k)$, $k\geq 0$, with
$$
p_0= (1-r) \,d+r\,b_0\,d,\qquad
p_1=(1-r)\,(1-d)+r\,[b_0\,(1-d)+b_1\,d]$$ $$
p_k=r\,[b_{k-1} \,(1-d)+b_k\, d], \quad k\geq 2.
$$
We consider the Beverton-Holt and Ricker models where the probability $r$ of successful reproduction attempt is given by
$$
r(z,K,v)=\dfrac{v \,K}{(\mu-1)z+K} \quad \textrm{(BH model)},\qquad
r(z,K,v)= v\,\left(\frac{1}{\mu}\right)^{\frac{z}{K}},\quad \textrm{(Ricker model),}
$$where the efficiency parameter $v$ is the probability of a successful reproduction attempt in the absence of competition, and $\mu:=5pv/d$.
We estimate the two parameters $v$ and $K$ based on the yearly number of adult females between 1972 and 1998 \cite{davison2021parameter,kennedy2013severe}.

\begin{table} \centering
\begin{tabular}{|c|c|c|c|c|}
\cline{2-5}
\multicolumn{1}{c|}{ }&\multicolumn{2}{c|}{Beverton-Holt} & \multicolumn{2}{c|}{Ricker}\\
\hline
Method & $K$ & $v$ & $K$ & $v$\\
\hline
$\hat{\T}_{n}(\boldsymbol{w}_{n}^{(1)})$    & $335.1382$ & $0.7747$ & $315.3202$ & $0.7181$ \\
$\hat{\T}_{n}(\boldsymbol{w}_{n}^{(2)})$    & $109.2578$ & $0.6989$ & $95.5603$ & $0.6799$ \\
$\tilde{\T}_{n}(\boldsymbol{w}_{n}^{(1)})$  & $162.5136$ & $0.5855$ & $128.7405$ & $0.5870$ \\
$\tilde{\T}_{n}(\boldsymbol{w}_{n}^{(2)})$ & $107.3461$ & $0.7141$ & $94.0461$ & $0.6932$
\\
\hline
\end{tabular}
  \caption{\textsf{Black robin population}. Estimates for the carrying capacity $K$ and the efficiency parameter $v$ obtained via each method. \label{br_results}}

\end{table}

\begin{figure}[h!]
\centering\includegraphics[width=0.8\textwidth]{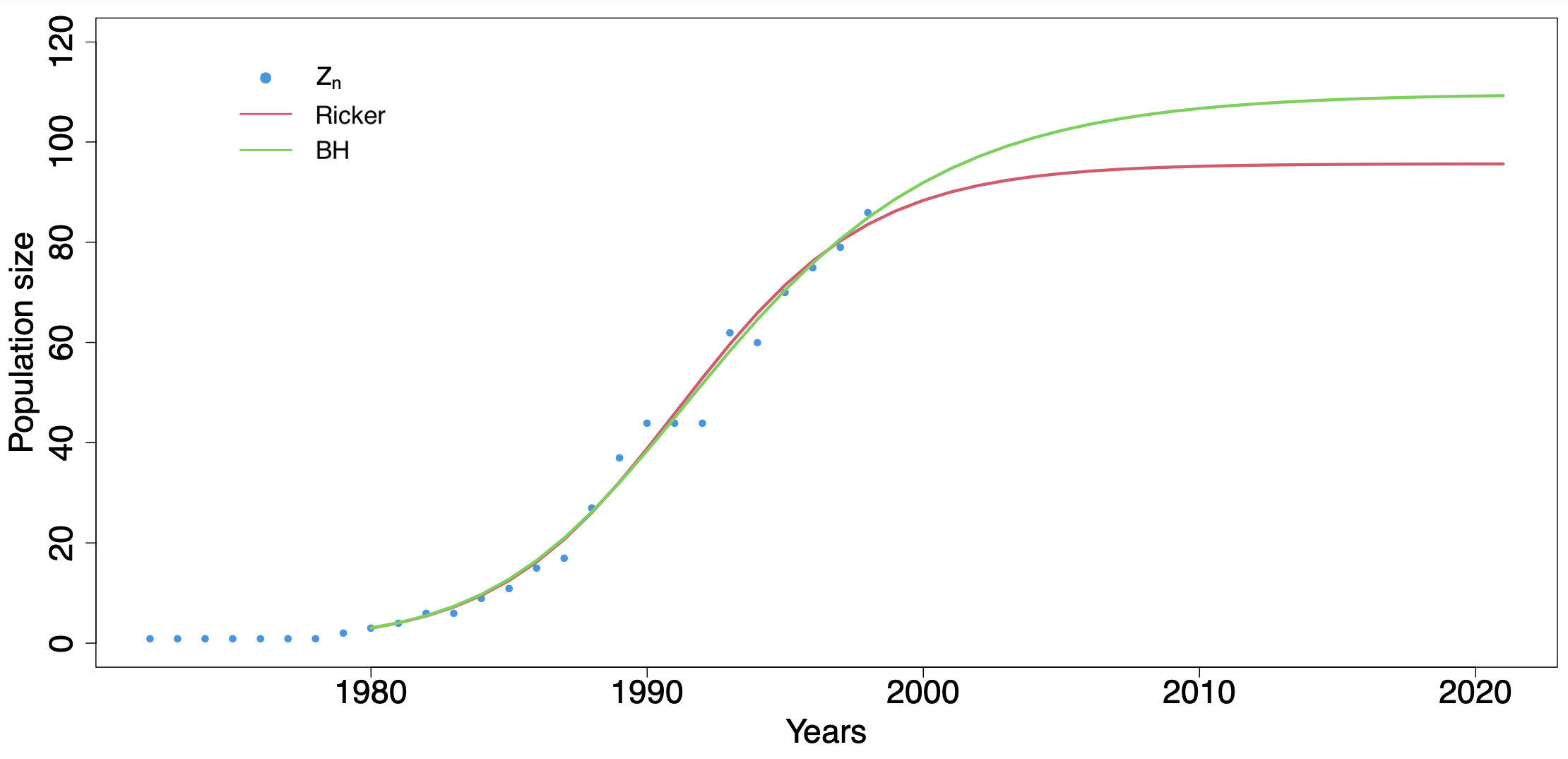}
\caption{\textsf{Black robin population}. Number of adult females between 1972 and 1998 (blue points), with the mean population size curve of the estimated Beverton-Holt model (green curve) and Ricker model (red curve)  obtained with our estimator $\widehat{\vc \theta}_n$ and weights $\boldsymbol{w}_{n}^{(2)}$.\label{br_pic}}
\end{figure}

In Table \ref{br_results} we present the estimates obtained with the estimators $\widehat{\vc \theta}_n$ and
$\widetilde{\vc\theta}_n$ and the weighting functions $\boldsymbol{w}_{n}^{(1)}$ and $\boldsymbol{w}_{n}^{(2)}$.
In Figure \ref{br_pic} we plot the yearly population counts of the number of adult females, together with the mean population size curves of the Beverton-Holt model and Ricker model estimated using the weighting function $\boldsymbol{w}_{n}^{(2)}$.

From Table \ref{br_results} we see that, for both estimators $\widehat{\vc \theta}_n$ and
$\widetilde{\vc\theta}_n$, there is a large difference in the estimates obtained using the two weighting functions. This is because the number of breeding females was one for seven years in a row (see Figure~\ref{br_pic}), and $\boldsymbol{w}_{n}^{(1)}$ places a large weight on these observations. As highlighted in Section \ref{growing_pop}, $\boldsymbol{w}_{n}^{(1)}$  does not generally provide results as accurate as $\boldsymbol{w}_{n}^{(2)}$, so here, we rely more on the results obtained with $\boldsymbol{w}_{n}^{(2)}$.

We also observe from Table \ref{br_results} that the estimates are quite sensitive to the model. This is also highlighted in Figure \ref{br_pic}, especially for the years after 1998 where the models are used for making predictions. The black robin population reached 239 adults in 2011 (i.e. $\sim 120$ females) \cite{massaro2013nest}, so it appears that the Beverton-Holt model is more appropriate here than the Ricker model. \blue{We provide bootstrap confidence intervals for the parameters of the Beverton-Holt model with weight $\boldsymbol{w}_{n}^{(2)}$ in Section~\ref{subsec:boot_CI} of Appendix~\ref{appB}.} Several other PSDBP models could be fitted to the black robin data, and a thorough model selection analysis should then be performed, in collaboration with experts in ecology. 

\blue{

\section{Discussion}\label{disc}

In this paper we described a class of estimators for parametric PSDBPs which account for a common observation bias: when we study a population for conservation purposes, its observed population sizes are sampled under the condition $Z_n>0$. We proposed a least squares method where the mean offspring at population size $z$, $m(z,\T)$, is replaced with its equivalent in the $Q$-process, $m^\uparrow (z,\T)$. We considered two weighting functions, $\{\hat{w}_{n}^{(1)}(z)\}$ and $\{\hat{w}_{n}^{(2)}(z)\}$; we generally recommend applying $\{\hat{w}_{n}^{(2)}(z)\}$ in practice, as it is approximately inversely proportional to the variance of the error $\{\hat{m}_n(z)-{m}^{\uparrow}(z, \T)\}$, which is known to be asymptotically  optimal. The bias corrected by our estimators is more pronounced when the observed population is endangered, for example, because its 
%\footnote{S to P and C: remove `initial'? \blue{Carmen: yes, I agree}} 
population size is low. It is in these situations that our estimators should be considered in practice. In contrast, when observed population sizes are higher, then we would generally expect $m(z, \T) \approx m^\uparrow (z, \T)$, leading to very similar estimates to those obtained with classical least squares estimators.

We supported our methodology by establishing $C$-consistency and asymptotic normality of the new class of estimators, which can be used to derive confidence regions.
While our estimators rely on asymptotic properties ---because $m^\uparrow(z, \T_0)$ is the conditional limit of $\hat{m}_n(z)$, given $Z_n>0$--- we illustrated that they also perform well for moderate values of $n$.

We expect that our approach can be extended to other stochastic population processes with
an absorbing state at zero, for which the $Q$-process and its stationary distribution exist.
Examples include certain diffusion models, continuous-time birth-death processes, controlled branching processes, multitype branching processes, or processes in a random environment.
For these processes, the data would first need to be summarized by selecting appropriate statistics (here the MLEs $\hat{m}_n(z)$ for all $z$) that converge to quantities that can be expressed in terms of the transition probabilities of the $Q$-process (here ${m}^{\uparrow}(z,\T_0)$). A least squares estimator would then be constructed by minimising the weighted sum of the squared error terms (here $\{\hat{m}_n(z)-m^\uparrow(z,\T)\}$). Asymptotic properties of that estimator would then need to be derived from those of the summary statistics. Following this approach would require overcoming similar technical challenges to those in \cite{braunsteins2022parameter} and in this paper.
}

\section{Proofs}
\label{sec:proofs}

\noindent\textbf{Proof of Lemma \ref{weights}.}
The fact that $\{\hat{w}_n^{(1)}(z)\}$ satisfies Assumption (A1) is a consequence of \cite[Theorem 3(c)]{Gosselin-2001} (see also \cite[Lemma 1]{braunsteins2022parameter}).

The fact that $\{\hat{w}_n^{(2)}(z)\}$ satisfies Assumption (A1) follows from the fact that
\begin{equation}\label{lem1p}\hat{w}^{(2)}_n(z)=\frac{z \sum_{i=0}^{n-1}\ind{Z_i=z}}{\sum_{k=1}^n Z_k}=z\dfrac{\hat{w}_n^{(1)}(z)}{\sum_{k=1}^n Z_k/n}.\end{equation} Conditional on $Z_n>0$, $\hat{w}_n^{(1)}(z)$ converges in probability to $u_zv_z$, and the denominator converges to $\sum_{k=1}^\infty k u_zv_z$ in probability by \cite[Lemma 6]{braunsteins2022parameter}. The result then follows from Slutsky's theorem. \qed

\noindent\textbf{Proof of Proposition \ref{wls_equiv}.}
We prove the result for the weights $\{\hat{w}_n^{(2)}(z)\}$; the proof is similar for the other weights. Letting $j_z:=\sum_{i=0}^{n-1}\ind{Z_i=z}$ and using \eqref{weights_def}, we thus show that the estimator
\begin{equation*}
\widehat{\vc\theta}_n  =\arg\min_{\theta\in\Theta} \sum_{z=1}^\infty \frac{z j_z}{\sum_{i=0}^{n-1} Z_i} \left\{\hat{m}_n(z)-{m}^{\uparrow}(z, \T)\right\}^2
\end{equation*}
is equal to the `modified' classic estimator
\begin{equation*}\label{cwlse}
\widehat{\T}_n^*=\arg\min_{\T \in \Theta} \sum_{k =1}^{n} Z_{k-1}^{-1}\left\{Z_k - Z_{k-1}\,m^\uparrow(Z_{k-1}, \T)\right\}^2.
\end{equation*}
First, by using \eqref{MLEmz}, we obtain
\begin{eqnarray}\nonumber
\widehat{\vc\theta}_n  &=&\arg\min_{\theta\in\Theta} \sum_{z=1}^\infty \frac{z j_z}{\sum_{i=0}^{n-1} Z_i} \left\{\frac{\sum_{i=0}^{n-1} Z_{i+1}\ind{Z_i=z}}{z\,j_z}-{m}^{\uparrow}(z, \T)\right\}^2\\ &=&\arg\min_{\theta\in\Theta} \sum_{z=1}^\infty \frac{1}{z j_z} \left\{\sum_{i=0}^{n-1} Z_{i+1}\ind{Z_i=z}-z j_z{m}^{\uparrow}(z, \T)\right\}^2.
\end{eqnarray}
Next, if we define $J_z:=\{1\leq k_z\leq n: Z_{k_z-1}=z\}$ for $z\geq 1$ (so that $j_z=|J_z|$), then
\begin{eqnarray}
\widehat{\T}_n^*&=&\arg\min_{\T \in \Theta} \sum_{z=1}^\infty \sum_{k_z\in J_z} \frac{1}{z}\left\{Z_{k_z} - z\,m^\uparrow(z, \T)\right\}^2,
\end{eqnarray}
and we are left with comparing
$$I(z,\mathcal{Z},\T):=\sum_{k_z\in J_z} \left\{Z_{k_z} - z\,m^\uparrow(z, \T)\right\}^2\quad \text{to}\quad J(z,\mathcal{Z},\T):=\frac{1}{j_z} \left\{\sum_{i=0}^{n-1} Z_{i+1}\ind{Z_i=z}-z j_z{m}^{\uparrow}(z, \T)\right\}^2.$$
We have
\begin{eqnarray*}
I(z,\mathcal{Z},\T)&=& \sum_{k_z\in J_z}Z_{k_z}^2 - 2 z\,m^\uparrow(z, \T)\sum_{k_z\in J_z}Z_{k_z}+j_z z^2\,(m^\uparrow(z, \T))^2.
\end{eqnarray*}
On the other hand,
\begin{eqnarray*}
J(z,\mathcal{Z},\T)&=& \frac{1}{j_z} \left\{\sum_{k_z\in J_z} Z_{k_z}-z j_z{m}^{\uparrow}(z, \T)\right\}^2\\&=& \frac{1}{j_z} \left(\sum_{k_z\in J_z} Z_{k_z}\right)^2-2 z {m}^{\uparrow}(z, \T)\sum_{k_z\in J_z} Z_{k_z} + z^2 j_z (m^\uparrow(z, \T))^2,
\end{eqnarray*}
so we conclude that
$$I(z,\mathcal{Z},\T)=J(z,\mathcal{Z},\T)+ C(z,\mathcal{Z}), \quad \text{where} \quad C(z,\mathcal{Z}):=
\sum_{k_z\in J_z} Z_{k_z}^2 -  \frac{1}{j_z} \left(\sum_{k_z\in J_z} Z_{k_z}\right)^2$$
is independent of the parameter $\T$. This means that
\begin{eqnarray*}
\widehat{\T}_n^*&=&\arg\min_{\T \in \Theta} \sum_{z=1}^\infty \frac{1}{z}I(z,\mathcal{Z},\T)\\&=&\arg\min_{\T \in \Theta} \sum_{z=1}^\infty  \frac{1}{z}\{J(z,\mathcal{Z},\T)+ C(z,\mathcal{Z})\}\\&=&\arg\min_{\T \in \Theta} \left\{\sum_{z=1}^\infty  \frac{1}{z}J(z,\mathcal{Z},\T)+ \sum_{z=1}^\infty  \frac{1}{z} C(z,\mathcal{Z})\right\}\\&=& \arg\min_{\T \in \Theta} \sum_{z=1}^\infty  \frac{1}{z}J(z,\mathcal{Z},\T)=\widehat{\T}_n.
\end{eqnarray*}
\qed

%%%

%%%%%%

\medskip

To prove Theorems \ref{thm:C-consistency-WLSE-1} and \ref{thm:C-normality-WLSE-1} we require the following two lemmas. %\footnote{uniformise the symbols for probability}

\begin{lem}\label{lem-sum}If the weights $\{\hat{w}_n(z)\}_{z\geq 1}$ satisfy Assumption (A1) then
for each $\epsilon>0$, \label{lem:conv-qnz-wz-i}
\begin{equation*}\label{eq:unif-conv-qnz-wz}
\lim_{n\to\infty}\mbP_i\left[\sum_{z=1}^\infty \left|\hat{w}_n(z)-w_z\right|>\epsilon|Z_n>0\right]=0.
\end{equation*}

\end{lem}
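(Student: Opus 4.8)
The plan is to recognise $\sum_{z\geq1}|\hat{w}_n(z)-w_z|$ as twice the $\ell^1$ distance between two probability mass functions and to run a Scheffé-type argument, exploiting the fact that the conditional convergence $\hat{w}_n(z)\to w_z$ supplied by Assumption (A1) holds only pointwise in $z$. The starting observation is the elementary identity for probability distributions: since $\sum_{z\geq1}\hat{w}_n(z)=\sum_{z\geq1}w_z=1$, the positive and negative parts of $\hat{w}_n(z)-w_z$ carry equal total mass, so that
\begin{equation*}
\sum_{z\geq1}\bigl|\hat{w}_n(z)-w_z\bigr| = 2\sum_{z\geq1}\bigl(w_z-\hat{w}_n(z)\bigr)^+ .
\end{equation*}
The point of passing to the positive-part form is that each summand is now dominated by the \emph{deterministic} and \emph{summable} quantity $w_z$, which will let me control the tail uniformly in $n$ without needing any tail information on the random weights $\hat{w}_n$.

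Next I would truncate. Fix $\delta>0$; since $\sum_{z\geq1}w_z=1<\infty$, there is a finite threshold $z_0=z_0(\delta)$ with $\sum_{z>z_0}w_z<\delta$. Splitting the sum at $z_0$ and using $\bigl(w_z-\hat{w}_n(z)\bigr)^+\leq w_z$ on the tail yields the deterministic bound
\begin{equation*}
\sum_{z\geq1}\bigl(w_z-\hat{w}_n(z)\bigr)^+ \leq \sum_{z=1}^{z_0}\bigl|\hat{w}_n(z)-w_z\bigr| + \delta .
\end{equation*}
The remaining head is a \emph{finite} sum, and this is precisely where Assumption (A1) is used: a finite sum of terms each converging to $0$ in conditional probability also converges to $0$ in conditional probability. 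Concretely, by a union bound, for any $\eta>0$,
\begin{equation*}
\mbP_i\Bigl[\sum_{z=1}^{z_0}\bigl|\hat{w}_n(z)-w_z\bigr|>\eta \,\Big|\, Z_n>0\Bigr] \leq \sum_{z=1}^{z_0}\mbP_i\Bigl[\bigl|\hat{w}_n(z)-w_z\bigr|>\eta/z_0 \,\Big|\, Z_n>0\Bigr],
\end{equation*}
and each of the finitely many terms on the right tends to $0$ as $n\to\infty$ by Assumption (A1).

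To conclude, given $\epsilon>0$ I would take $\delta=\epsilon/4$ and $\eta=\epsilon/4$. Combining the two displayed bounds, the event $\bigl\{\sum_{z\geq1}|\hat{w}_n(z)-w_z|>\epsilon\bigr\}$ is contained in $\bigl\{\sum_{z=1}^{z_0}|\hat{w}_n(z)-w_z|>\eta\bigr\}$, whose conditional probability vanishes as $n\to\infty$ by the last display; this gives the claim. The only genuine obstacle is that pointwise convergence in probability does not by itself force $\ell^1$ convergence, because mass could in principle escape to arbitrarily large population sizes as $n$ grows; the positive-part (Scheffé) identity is exactly the device that rules this out, since it reduces the infinite sum to a dominated, summable tail plus a finite head that is handled termwise.
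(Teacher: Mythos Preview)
Your proof is correct. Both your argument and the paper's proceed by truncation: choose a finite $z_0$ so that $\sum_{z>z_0}w_z$ is small, handle the finite head via Assumption~(A1) and a union bound, and deal with the tail separately. The difference lies in how the tail is controlled. The paper bounds the tail of $\sum_z|\hat w_n(z)-w_z|$ by $\sum_{z>z_0}w_z+\sum_{z>z_0}\hat w_n(z)$ and then controls the random piece $\sum_{z>z_0}\hat w_n(z)=1-\sum_{z\leq z_0}\hat w_n(z)$ by reusing the head estimate a second time. Your Scheff\'e identity $\sum_z|\hat w_n(z)-w_z|=2\sum_z(w_z-\hat w_n(z))^+$ is a cleaner device: since each positive part is dominated by the deterministic, summable $w_z$, the random tail $\sum_{z>z_0}\hat w_n(z)$ never needs to be estimated at all. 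Your route is slightly more elegant; the paper's is marginally more elementary in that it avoids invoking the Scheff\'e trick. Both use exactly the same inputs and yield the same conclusion.
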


\noindent\textbf{Proof of Lemma \ref{lem-sum}.}
First recall that $\{\hat{w}_n(z)\}$ and $\{w_z\}$ are probability distributions. Therefore, for any $\varepsilon>0$, we can choose $z^*(\varepsilon)$ such that
\begin{equation}\label{lem2a}\sum_{z=z^*(\varepsilon)+1}^{\infty} w_z \leq \frac{\varepsilon}{4}.\end{equation} Observe that if
\begin{equation}\label{lem2b}
|\hat w_n(z) - w_z| < \frac{\varepsilon}{4 z^*(\varepsilon)}, \quad \text{for all } z \in \{ 1, \dots , z^*(\varepsilon)\},
\end{equation}
then
\begin{align*}
\sum_{z=1}^\infty | \hat w_n(z) - w_z | &\leq \sum^{z^*(\varepsilon)}_{z=1} |\hat w_n(z) - w_z|+ \sum^\infty_{z=z^*(\varepsilon)+1} w_z  + \sum^\infty_{z=z^*(\varepsilon)+1}  \hat w_n(z) \\
&\leq \frac{\varepsilon}{4}+ \frac{\varepsilon}{4} + 1 - \sum^{z^*(\varepsilon)}_{z=1} \hat w_n(z)  \\
&\leq \frac{\varepsilon}{2} + 1 - \sum^{z^*(\varepsilon)}_{z=1} w_z + \sum^{z^*(\varepsilon)}_{z=1} |\hat w_n(z) - w_z| \\
& \leq \varepsilon,
\end{align*}
where we have applied the triangle inequality, \eqref{lem2a}, \eqref{lem2b}, and the fact that $\{\hat{w}_n(z)\}$ is a probability distribution.
Consequently,
\begin{align*}
\lim_{n\to\infty} &\mbP_i\left[\sum_{z=1}^\infty \left|\hat{w}_n(z)-w_z\right|>\epsilon|Z_n>0\right] \\
&\leq \lim_{n\to\infty} \mbP_i\left[\left|\hat{w}_n(z)-w_z\right|>\frac{\varepsilon}{4 z^*(\varepsilon)}, \text{ for some } z = 1, \dots, z^*(\varepsilon)|Z_n>0\right] \\
&\leq \sum_{i=1}^{z^*(\varepsilon)} \lim_{n\to\infty}  \mbP_i\left[\left|\hat{w}_n(z)-w_z\right|>\frac{\varepsilon}{4 z^*(\varepsilon)}|Z_n>0\right] \\
&=0,
\end{align*}
where in the final step we apply Assumption (A1).\qed

\begin{lem}\label{lem:help-consistency}
Let $i\in\N$ be an initial state and $f:\T\in\Theta\mapsto [0,\infty)$ be a continuous function \blue{with a unique minimum at $\T_0$}. For each $n\in\N$, let $f_n:(\omega,\T)\in\Omega\times\Theta\mapsto \R_+$ be a function such that for each $\omega\in\Omega$, $f_n(\omega,\cdot)$ is a continuous function on $\Theta$, and for each $\T\in\Theta$, $f_n(\cdot,\T)$ is a measurable function of the random variables $Z_0,\ldots,Z_n$. If
\begin{align}\label{eq:hip-help-consistency}
\forall\epsilon>0,\quad\lim_{n\to\infty} \mbP_i\left[\omega\in\Omega:\sup_{\T\in\Theta}|f_n(\omega,\T)-f(\T)|>\epsilon|Z_n>0\right]=0,
\end{align}
then
\begin{enumerate}[label=(\roman*),ref=\emph{(\roman*)}]
\item \blue{$\lim_{n\to\infty} \mbP_i\left[\omega\in\Omega: \exists \vc X(\omega)\in \Theta \text{ satisfying } f_n(\omega,\vc X(\omega))=\min_{\T\in\Theta}f_n(\omega,\T)|Z_{n}>0\right]=1.$}\label{lem:C-existence}

\item $\forall\epsilon>0,\quad\lim_{n\to\infty} \mbP_i\left[\omega\in\Omega:\big\|\arg\min_{\T\in\Theta}f_n(\omega,\T)-\T_0\big\|>\epsilon|Z_{n}>0\right]=0.$\label{lem:C-consistency}
\end{enumerate}
\end{lem}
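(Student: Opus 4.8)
The plan is to run the classical argmin-consistency argument, but carried out conditionally on $\{Z_n>0\}$ and with the uniform convergence hypothesis \eqref{eq:hip-help-consistency} as the only probabilistic input. Write $\mbP_i^{(n)}(\cdot):=\mbP_i(\cdot\,|\,Z_n>0)$ and set $G_n(\omega):=\sup_{\T\in\Theta}|f_n(\omega,\T)-f(\T)|$; the hypothesis is exactly that $G_n\to 0$ in $\mbP_i^{(n)}$-probability. Every deterministic consequence I derive will hold on the event $A_n(\eta):=\{G_n<\eta\}$, whose conditional probability tends to $1$, and the two conclusions will follow by choosing the radius $\epsilon$ and the threshold $\eta$ compatibly.

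First I would isolate the one structural fact about $f$ that makes everything work: for each $\epsilon>0$,
\[
\kappa(\epsilon):=\inf\{f(\T):\T\in\Theta,\ \|\T-\T_0\|\geq\epsilon\}>0,
\]
and, since $\T_0\in int(\Theta)$, for all sufficiently small $\epsilon$ the closed ball $\bar B(\T_0,\epsilon)$ is contained in $\Theta$ and is compact. This \emph{well-separation} of the minimum is precisely what converts ``$f_n$ close to $f$'' into ``$\arg\min f_n$ close to $\T_0$''. Given it, existence (part (i)) follows by showing that on $A_n(\eta)$ with $\eta<\kappa(\epsilon)/2$ the infimum of $f_n$ is attained inside $\bar B(\T_0,\epsilon)$: since $f_n(\T_0)<f(\T_0)+\eta=\eta$ we have $\inf_\Theta f_n<\eta$, whereas for every $\T$ with $\|\T-\T_0\|\geq\epsilon$ we have $f_n(\T)>f(\T)-\eta\geq\kappa(\epsilon)-\eta>\eta$; hence no minimizing value lives outside the compact ball, and continuity of $f_n(\omega,\cdot)$ on $\bar B(\T_0,\epsilon)$ forces the minimum to be attained there. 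Letting $n\to\infty$ and using $\mbP_i^{(n)}(A_n(\eta))\to 1$ gives part (i).

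The same inequality chain yields consistency (part (ii)): on $A_n(\eta)$ every minimizer $\hat\T_n$ must satisfy $\|\hat\T_n-\T_0\|<\epsilon$, for otherwise $f_n(\hat\T_n)>\kappa(\epsilon)-\eta>\eta>f_n(\T_0)\geq f_n(\hat\T_n)$, a contradiction. Thus $\{\|\hat\T_n-\T_0\|>\epsilon\}\subseteq A_n(\eta)^c$, and its conditional probability vanishes; note this argument bounds \emph{every} (measurably selected) minimizer simultaneously, so no uniqueness of $\arg\min f_n$ is required, only a routine measurable selection once the minimum is known to be attained on a compact set.

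The main obstacle is the structural step establishing $\kappa(\epsilon)>0$ together with compactness of the low sublevel sets, because $\Theta$ need not be compact while $f$ is assumed only continuous with a unique minimizer: on a non-compact $\Theta$ such an $f$ can still have $\inf_{\|\T-\T_0\|\geq\epsilon}f=0$ approached ``at infinity,'' in which case $\arg\min f_n$ could escape to the boundary. I would close this gap by invoking compactness of $\Theta$ (or, failing that, coercivity of $f$); in the intended application to \eqref{Asss2} the relevant control is supplied by the uniform boundedness of $m^\uparrow$ in Assumption (A3) together with the identifiability Assumption (A2), which together guarantee that $f$ separates its minimizer $\T_0$ from the rest of the parameter space.
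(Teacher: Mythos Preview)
Your argument is correct and follows essentially the same route as the paper's proof: both hinge on the well-separation quantity $\kappa(\epsilon)=\inf_{\|\T-\T_0\|\geq\epsilon}f(\T)>0$ (the paper writes it as $a=\inf_{\Theta\setminus C}f(\T)>0$ and $b=\inf_{\Theta\setminus C}f(\T)$), compare $f_n(\T_0)$ against $f_n$ outside a small ball, and conclude that the minimum is attained inside that ball with conditional probability tending to one. Your presentation is somewhat more direct---you work on the single event $A_n(\eta)=\{G_n<\eta\}$ and read off both conclusions at once---whereas the paper first proves the two auxiliary facts $f_n(\T_0)\to 0$ and $|\inf_C f_n-\inf_C f|\to 0$ in conditional probability and then combines them; the logical content is the same.

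You are also right to flag the structural step $\kappa(\epsilon)>0$ as the one place where something beyond the stated hypotheses is being used: the paper simply asserts $a>0$ without comment, so your proof is at least as rigorous on this point. Your proposed remedy (compactness of $\Theta$, or coercivity supplied in the application by Assumptions~(A2)--(A3)) is the natural one.
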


\noindent\textbf{Proof of Lemma \ref{lem:help-consistency}.}
To ease the notation, throughout this proof let us denote $f_n(\T)=f_n(\cdot,\T)$.  \blue{Without loss of generality we can assume that $f(\T_0)=0$; if this does not hold, then we consider the functions $f_n(\cdot)-f(\T_0)$ and $f(\cdot)-f(\T_0)$ instead of $f_n(\cdot)$ and $f(\cdot)$.}
To establish \emph{(i)} and \emph{(ii)} we apply the following two identities that hold under \eqref{eq:hip-help-consistency}: for any $\varepsilon >0$, and $C \subseteq \Theta$ we have
% First, note that by \eqref{eq:hip-help-consistency}, we have that for any $\epsilon>0$,
\begin{align}
\lim_{n\to\infty} \mbP_i\left[f_n(\T_0)>\epsilon|Z_n>0\right]&=0,\label{eq:help-lim1-con-Tn}\\
\lim_{n\to\infty} \mbP_i\bigg[\big|\inf_{\T\in C}f_n(\T)-\inf_{\T\in C}f(\T)\big|>\epsilon|Z_n>0\bigg]&=0.\label{eq:help-lim2-con-Tn}
\end{align}
To establish \eqref{eq:help-lim1-con-Tn} we use the fact that, since $f(\T_0)=0$, for any $n \in \mathbb{N}$ we have
\[
\{ \omega \in \Omega : f_n(\T_0) > \varepsilon\} = \{ \omega \in \Omega: |f_n(\T_0) -f(\T_0)| > \varepsilon \} \subseteq \{\omega \in \Omega: \sup_{\T \in \Theta} | f_n(\T) - f(\T) | > \varepsilon \},
\]
which, by \eqref{eq:hip-help-consistency}, implies
\begin{align*}
\lim_{n\to\infty} \mbP_i\bigg[f_n(\T_0)>\epsilon|Z_n>0\bigg]\leq \lim_{n\to\infty} \mbP_i\bigg[\sup_{\T\in\Theta} \big|f_n(\T)-f(\T)\big|>\epsilon|Z_n>0\bigg]=0.
\end{align*}
To establish \eqref{eq:help-lim2-con-Tn}, we denote $\T_C=\arg\inf_{\T\in C}f(\T)$ and $\T_{n,C}=\arg\inf_{\T\in C}f_n(\T)$. For any $\T' \in C$ we have (surely)
\begin{align*}
\inf_{\T\in C}f_n(\T)-\inf_{\T\in C}f(\T)\leq f_n(\T')-\inf_{\T\in C}f(\T),
\end{align*}
and consequently,
\begin{align*}
\inf_{\T\in C}f_n(\T)-\inf_{\T\in C}f(\T)\leq f_n(\T_C)-\inf_{\T\in C}f(\T)= f_n(\T_C)-f(\T_C)\leq \sup_{\T\in \Theta}\big|f_n(\T)-f(\T)\big|.
\end{align*}
Similarly,
\begin{align*}
\inf_{\T\in C}f(\T)-\inf_{\T\in C}f_n(\T)\leq f(\T_{n,C})-\inf_{\T\in C}f_n(\T)= f(\T_{n,C})-f_n(\T_{n,C})\leq \sup_{\T\in \Theta}\big|f_n(\T)-f(\T)\big|.
\end{align*}
Combining both the above inequalities we see that for any $n \in \mathbb{N}$ and $\epsilon>0$,
\begin{align*}
\bigg\{\big|\inf_{\T\in C}f_n(\T)-\inf_{\T\in C}f(\T)\big|>\epsilon\bigg\}\subseteq\bigg\{\sup_{\T\in \Theta}\big|f_n(\T)-f(\T)\big|>\epsilon\bigg\},
\end{align*}
which then allows us to apply \eqref{eq:hip-help-consistency} to obtain \eqref{eq:help-lim2-con-Tn}.

To establish \emph{(i)} we observe that it is sufficient to show that, for any compact set $C \subseteq \Theta$ with $\T_0 \in C$, we have
\begin{align}\label{eq:help-lim3-con-Tn}
\lim_{n\to\infty} \mbP_i\bigg[\inf_{\T\in \Theta\backslash C}f_n(\T)>\min_{\T\in C}f_n(\T)|Z_n>0\bigg]=1.
\end{align}
Note that, because $C$ is a compact set, and because for any $n \in \mathbb{N}$ and $\omega \in \Omega$, $\T \mapsto f_n(\omega,\T)$ is a continuous function on $\Theta$, $\min_{\T\in C}f_n(\T)$ exists.
The claim in \emph{(i)} then follows from \eqref{eq:help-lim3-con-Tn} by letting $C$ be an arbitrarily small closed ball centered at $\T_0$.

To establish \eqref{eq:help-lim3-con-Tn} we let $0<\varepsilon=a/4$, with $a=\inf_{\T\in \Theta\backslash C}f(\T)>0$, and observe that
\begin{align*}
\bigg\{f_n(\T_0)\leq\epsilon\bigg\}\cap\bigg\{\Big|\inf_{\T\in \Theta\backslash C}f_n(\T)-a\Big|\leq\epsilon\bigg\}\subseteq\bigg\{\min_{\T\in C}f_n(\T)\leq f_n(\T_0)\leq\frac{a}{4}<\frac{3a}{4}\leq\inf_{\T\in \Theta\backslash C}f_n(\T)\bigg\}.
\end{align*}
We then apply \eqref{eq:help-lim1-con-Tn} and \eqref{eq:help-lim2-con-Tn} to obtain \eqref{eq:help-lim3-con-Tn}.

\vspace{2ex}

To establish \emph{(ii)} we let $\boldsymbol{\vartheta}_n=\arg\min_{\T\in\Theta}f_n(\T)$, and use the fact that
\begin{align}
\forall\epsilon>0,\quad \lim_{n\to\infty} \mbP_i\left[f(\boldsymbol{\vartheta}_n)>\epsilon|Z_n>0\right]=0.\label{eq:help-C-consistency-weighted-fn}
\end{align}
Indeed, \eqref{eq:help-C-consistency-weighted-fn} follows by applying both \eqref{eq:help-lim2-con-Tn} with $C=\Theta$, to obtain
\begin{align*}
\forall\epsilon>0,\quad \lim_{n\to\infty} \mbP_i\left[f_n(\boldsymbol{\vartheta}_n)>\epsilon|Z_n>0\right]=0,
\end{align*}
and \eqref{eq:hip-help-consistency} to obtain
\begin{align*}
\forall\epsilon>0,\quad \lim_{n\to\infty} \mbP_i\Big[\big|f_n(\boldsymbol{\vartheta}_n)-f(\boldsymbol{\vartheta}_n)\big|>\epsilon|Z_n>0\Big]=0.
\end{align*}
To prove \ref{lem:C-consistency}, we then note that, for each $\eps>0$, given that $f(\cdot)$ has a unique minimum at $\T_0$, we can take $C=\{\T\in\Theta:\|\T-\T_0\|\leq \eps\}$, $b=\inf_{\Theta\backslash C} f(\T)$, and $\beta=b/2$; then $0=\min_{\T\in C} f(\T)<\beta<b$. Moreover, we have $b=\inf_{\Theta\backslash C} f(\T)\leq f(\T')$ for every $\T'\in\Theta\backslash C$, and consequently
$$\{\|\boldsymbol{\vartheta}_n-\T_0\|>\eps\}\subseteq \{f(\boldsymbol{\vartheta}_n)>\beta\}.$$
The result in \ref{lem:C-consistency} now follows from \eqref{eq:help-C-consistency-weighted-fn}.
\qed

\medskip

We are now ready to establish the limiting properties of the estimator $\widehat{\T}_n$, namely, $C$-consistency (Theorem \ref{thm:C-consistency-WLSE-1}) and $C$-normality (Theorem \ref{thm:C-normality-WLSE-1}).
For each $n \in \N$, recall that
$$\widehat{\T}_n=\arg\min_{\T\in\Theta} S_n(\T),\qquad \textrm{where}\qquad
 S_{n}(\T):=\sum_{z=1}^\infty \hat{w}_n(z) (m^\uparrow(z,\T)-\hat{m}_n(z))^2.$$

\noindent\textbf{Proof of Theorem \ref{thm:C-consistency-WLSE-1}.}
Let us fix $i\in\N$. By Lemma \ref{lem:help-consistency}, it is enough to prove that
\begin{align*}
\forall\epsilon>0,\quad\lim_{n\to\infty} \mbP_i\left[\sup_{\T\in\Theta}\left|S_{n}(\T)-S(\T)\right|>\epsilon|Z_{n}>0\right]=0,
\end{align*}where
$$S(\T):=\sum_{z=1}^\infty w_z (\T_0) (m^\uparrow(z,\T)-m^\uparrow(z,\T_0))^2.$$
To that end, we
introduce the following function
$$\hat{S}_{n}(\T)=\sum_{z=1}^\infty w_z(\T_0)\ind{j_n(z)>0} (m^\uparrow(z,\T)-\hat{m}_n(z))^2,\quad n\in\N,\ \T\in\Theta,$$ where $j_n(z):=\sum_{i=0}^{n-1} \ind{Z_i=z}$.
It then suffices to prove that, for each $\epsilon>0$,
\begin{enumerate}[label=\emph{(\alph*)},ref=\emph{(\alph*)}]
\item $\lim_{n\to\infty} \mbP_i\left[\sup_{\T\in\Theta}\left|S_{n}(\T)-\hat{S}_{n}(\T)\right|>\epsilon|Z_{n}>0\right]=0$,\label{eq:lim-sup-C-consist-weighted1-help1}
\item $\lim_{n\to\infty} \mbP_i\left[\sup_{\T\in\Theta}\left|\hat{S}_{n}(\T)-S(\T)\right|>\epsilon|Z_{n}>0\right]=0$.\label{eq:lim-sup-C-consist-weighted1-help2}
\end{enumerate}

We start with the proof of \ref{eq:lim-sup-C-consist-weighted1-help1}. Recall the definition of $M$ and $C$ from Assumptions \ref{cond:m-arrow-unif-bounded} and \ref{cond:m-arrow-hat-unif-bounded}, respectively. Let $M'=2(M+C)^2$ and $A_n=\sup_{z\in\N} |m^\uparrow(z,\T_0)-\hat{m}_n(z)|$. On the event $\{A_n\leq C\}$ we have
\begin{align*}
\sup_{\T\in\Theta} |S_n(\T)&-\hat{S}_n(\T)|\leq\sup_{\T\in\Theta} \sum_{z=1}^\infty |\hat{w}_n(z)-w_z(\T_0)\ind{j_n(z)>0}| (m^\uparrow(z,\T)-\hat{m}_n(z))^2\\
&=\sup_{\T\in\Theta} \sum_{z=1}^\infty |\hat{w}_n(z)-w_z(\T_0)\ind{j_n(z)>0}| (m^\uparrow(z,\T)-m^\uparrow(z,\T_0)+m^\uparrow(z,\T_0)-\hat{m}_n(z))^2\\
&\leq \sup_{\T\in\Theta} \sum_{z=1}^\infty |\hat{w}_n(z)-w_z(\T_0)\ind{j_n(z)>0}| (m^\uparrow(z,\T)-m^\uparrow(z,\T_0))^2\\
&\phantom{=}+ \sum_{z=1}^\infty |\hat{w}_n(z)-w_z(\T_0)\ind{j_n(z)>0}| (m^\uparrow(z,\T_0)-\hat{m}_n(z))^2\\
&\phantom{=}+2\sup_{\T\in\Theta} \sum_{z=1}^\infty |\hat{w}_n(z)-w_z(\T_0)\ind{j_n(z)>0}|\cdot |m^\uparrow(z,\T)-m^\uparrow(z,\T_0)|\cdot |m^\uparrow(z,\T_0)-\hat{m}_n(z)|\\
&\leq M' \sum_{z=1}^\infty |\hat{w}_n(z)-w_z(\T_0)\ind{j_n(z)>0}|\\
&\leq M' \sum_{z=1}^\infty |\hat{w}_n(z)-w_z(\T_0)+w_z(\T_0)-w_z(\T_0)\ind{j_n(z)>0}|\\
&\leq M'  \sum_{z=1}^\infty |\hat{w}_n(z)-w_z(\T_0)|+M'  \sum_{z=1}^\infty w_z(\T_0)\ind{j_n(z)=0}.
\end{align*}
Consequently,
\begin{align*}
\left\{\sup_{\T\in\Theta} |S_n(\T)-\hat{S}_n(\T)|>\epsilon\right\}&=\left\{\sup_{\T\in\Theta} |S_n(\T)-\hat{S}_n(\T)|>\epsilon,A_n\leq C\right\}\cup\\
&\phantom{=}\cup\left\{\sup_{\T\in\Theta} |S_n(\T)-\hat{S}_n(\T)|>\epsilon,A_n> C\right\}\\
&\subseteq\left\{\sum_{z=1}^\infty |\hat{w}_n(z)-w_z(\T_0)|>\frac{\epsilon}{2M'},A_n\leq C\right\}\cup\\
&\phantom{=}\cup\left\{\sum_{z=1}^\infty w_z(\T_0)\ind{j_n(z)=0}>\frac{\epsilon}{2M'},A_n\leq C\right\} \cup\left\{A_n> C\right\}\\
&\subseteq\left\{\sum_{z=1}^\infty |\hat{w}_n(z)-w_z(\T_0)|>\frac{\epsilon}{2M'}\right\}\cup\left\{\sum_{z=1}^\infty w_z(\T_0)\ind{j_n(z)=0}>\frac{\epsilon}{2M'}\right\}\\
&\phantom{=}\cup\left\{A_n> C\right\}.
\end{align*}
Conditioning on $Z_n>0$, we then obtain \ref{eq:lim-sup-C-consist-weighted1-help1} by applying Lemma \ref{lem-sum} to the first event, the fact that $\lim_{n\to\infty} \mbP[\ind{j_n(z)=0}\leq\epsilon|Z_n>0]=1$ for any $\epsilon>0,z\in\N$ and that $\sum_{z=1}^\infty w_z(\T_0)=1$ to the second event, and Assumption \ref{cond:m-arrow-hat-unif-bounded} to the third event.

\vspace{2ex}

In order to prove \ref{eq:lim-sup-C-consist-weighted1-help2}, we observe that
\begin{align*}
\hat{S}_{n}(\T)-S(\T)&= \sum_{z=1}^\infty w_z(\T_0) \ind{j_n(z)>0}\left[(m^\uparrow(z,\T)-\hat{m}_n(z))^2-(m^\uparrow(z,\T)-m^\uparrow(z,\T_0))^2\right]\\
&\phantom{=}-\sum_{z=1}^\infty w_z(\T_0) \ind{j_n(z)=0}(m^\uparrow(z,\T)-m^\uparrow(z,\T_0))^2\\
&= \sum_{z=1}^\infty w_z(\T_0) \ind{j_n(z)>0}\left[\hat{m}_n(z)^2 -2\hat{m}_n(z)m^\uparrow(z,\T)-m^\uparrow(z,\T_0)^2+2m^\uparrow(z,\T)m^\uparrow(z,\T_0)\right]\\
&\phantom{=}-\sum_{z=1}^\infty w_z(\T_0) \ind{j_n(z)=0}(m^\uparrow(z,\T)-m^\uparrow(z,\T_0))^2,
\end{align*}
then
\begin{align*}
\sup_{\T\in\Theta}|\hat{S}_{n}(\T)-S(\T)|&\leq  \sum_{z=1}^\infty w_z(\T_0) \ind{j_n(z)>0}\left|\hat{m}_n(z)^2 -m^\uparrow(z,\T_0)^2\right|\\
&\phantom{=}+2\sup_{\T\in\Theta}\sum_{z=1}^\infty w_z(\T_0) \ind{j_n(z)>0}m^\uparrow(z,\T)\left|\hat{m}_n(z)-m^\uparrow(z,\T_0)\right|\\
&\phantom{=}+\sup_{\T\in\Theta}\sum_{z=1}^\infty w_z(\T_0) \ind{j_n(z)=0}(m^\uparrow(z,\T)-m^\uparrow(z,\T_0))^2,
\end{align*}
and consequently,
\begin{align*}
%\label{tttt1}
\left\{\sup_{\T\in\Theta}|\hat{S}_{n}(\T)-S(\T)|>\epsilon\right\}&\subseteq\left\{\sum_{z=1}^\infty w_z(\T_0) \ind{j_n(z)>0}\left|\hat{m}_n(z)^2 -m^\uparrow(z,\T_0)^2\right|>\frac{\epsilon}{3}\right\}\cup\\
&\phantom{\subseteq}\cup\left\{\sup_{\T\in\Theta}\sum_{z=1}^\infty w_z(\T_0) \ind{j_n(z)>0}m^\uparrow(z,\T)\left|\hat{m}_n(z)-m^\uparrow(z,\T_0)\right|>\frac{\epsilon}{6}\right\}\\
&\phantom{\subseteq}\cup\left\{\sup_{\T\in\Theta}\sum_{z=1}^\infty w_z(\T_0) \ind{j_n(z)=0}(m^\uparrow(z,\T)-m^\uparrow(z,\T_0))^2>\frac{\epsilon}{3}\right\}
\end{align*}

Because $\mathbb{P}_i(A_n>C | Z_n >0) \to 0$ by \ref{cond:m-arrow-hat-unif-bounded}, we can restrict our attention to the event $\{A_n \leq C\}$.
 On the event $\{A_n\leq C\}$,
\begin{align*}
\sup_{z\in\N}\left|\hat{m}_n(z)^2 -m^\uparrow(z,\T_0)^2\right|&\leq \sup_{z\in\N}\left|\hat{m}_n(z) -m^\uparrow(z,\T_0)\right|\cdot\sup_{z\in\N}\left|\hat{m}_n(z)+m^\uparrow(z,\T_0)\right| \\
&\leq C(C+2M)\leq M'.
\end{align*}
In addition, since ${\vc w}(\T_0)$ is a probability distribution, given $\epsilon>0$, there exists $K_0=K_0(\epsilon)\in\N$ such that $\sum_{z=K_0+1}^\infty w_z(\T_0)<\frac{\epsilon}{6M'}$. Consequently, on $\{A_n\leq C\}$
\begin{align*}
\sum_{z=1}^\infty w_z(\T_0) \ind{j_n(z)>0}\left|\hat{m}_n(z)^2 -m^\uparrow(z,\T_0)^2\right|&\leq \sum_{z=1}^{K_0} w_z(\T_0) \ind{j_n(z)>0}\left|\hat{m}_n(z)^2 -m^\uparrow(z,\T_0)^2\right|\\
&\phantom{=} + M'\sum_{z=K_0+1}^\infty w_z(\T_0),
\end{align*}
and hence
\begin{align*}
\bigcap_{z=1}^{K_0}\bigg\{w_z(\T_0) \ind{j_n(z)>0}&\left|\hat{m}_n(z)^2 -m^\uparrow(z,\T_0)^2\right|\leq \frac{\epsilon}{6K_0}\bigg\}\bigcap \{A_n\leq C\}\subseteq\\
&\subseteq \Big\{\sum_{z=1}^\infty w_z(\T_0) \ind{j_n(z)>0}\left|\hat{m}_n(z)^2 -m^\uparrow(z,\T_0)^2\right|\leq \frac{\epsilon}{3}\Big\}\bigcap \{A_n\leq C\}.
\end{align*}
Thus, by \eqref{eq:Q-consistency-m}, we get that
\begin{align*}
\lim_{n\to\infty}\mbP_i\bigg[\Big\{\sum_{z=1}^\infty w_z(\T_0) \ind{j_n(z)>0}\left|\hat{m}_n(z)^2 -m^\uparrow(z,\T_0)^2\right|\leq \frac{\epsilon}{3}\Big\}\cap \{A_n\leq C\}|Z_n>0\bigg]=1,
\end{align*}
and then
\begin{align*}
\lim_{n\to\infty}\mbP_i\bigg[\sum_{z=1}^\infty w_z(\T_0) \ind{j_n(z)>0}\left|\hat{m}_n(z)^2 -m^\uparrow(z,\T_0)^2\right|\leq \frac{\epsilon}{3}|Z_n>0\bigg]=1.
\end{align*}
With similar arguments using $\sup_{\T\in\Theta}\sup_{z\in\N}\left(m^\uparrow(z,\T)-m^\uparrow(z,\T_0)\right)^2\leq 4M^2$, and we obtain
\begin{align*}
\lim_{n\to\infty}\mbP_i\bigg[\sup_{\T\in\Theta}\sum_{z=1}^\infty w_z(\T_0) \ind{j_n(z)>0}m^\uparrow(z,\T)\left|\hat{m}_n(z)-m^\uparrow(z,\T_0)\right|\leq \frac{\epsilon}{6}|Z_n>0\bigg]=1,\\
\lim_{n\to\infty}\mbP_i\bigg[\sup_{\T\in\Theta}\sum_{z=1}^\infty w_z(\T_0) \ind{j_n(z)=0}(m^\uparrow(z,\T)-m^\uparrow(z,\T_0))^2\leq \frac{\epsilon}{3}|Z_n>0\bigg]=1.
\end{align*}
Consequently we have proved \ref{eq:lim-sup-C-consist-weighted1-help2} and the result now follows.\qed

\medskip

Before we prove Theorem \ref{thm:C-normality-WLSE-1} we recall the asymptotic normality of the MLEs $\{\hat{m}_n(z)\}_{z \in \N}$ that is established in
\cite[Theorem 1]{braunsteins2022parameter}: for any  $i,z,z_1,z_2\in\N$ and $x,x_1,x_2\in\R$,
\begin{align}
\lim_{n\to\infty}\mbP_i[\sqrt{n}(\hat{m}_{n}(z)-m^\uparrow(z,\T_0))\leq x|Z_n>0]&=\blue{\Phi}_{\gamma(z,\T_0)}(x),\label{eq:Q-normality-m}\\
\lim_{n\to\infty}\mbP_i[\sqrt{n}(\hat{m}_{n}(z_i)-m^\uparrow(z_i,\T_0))\leq x_i,\ i=1,2|Z_n>0]&=\blue{\Phi}_{\Gamma(z_1,z_2,\T_0)}(x_1,x_2),\label{eq:Q-indep-m}
\end{align}
where $\gamma(z,\T_0)$ is defined in \eqref{gamma} and
\begin{align}\label{eq:def-gamma}
\Gamma(z_1,z_2,\T_0):=\left(\begin{array}{cc}
   \gamma(z_1,\T_0) & 0 \\
   0 & \gamma(z_2,\T_0) \\
  \end{array}\right).
\end{align}

\noindent\textbf{Proof of Theorem \ref{thm:C-normality-WLSE-1}.}
%\begin{Prf}[Theorem \ref{thm:C-normality-WLSE-1}]
Let us fix $i\in\N$. First, we use a Taylor expansion for the function $\vc\nabla S_{n}(\cdot)$ around $\T_0$:
$${\vc 0}=\vc\nabla S_{n}(\widehat{\T}_n)=\vc\nabla S_{n}(\T_0)+\vc\nabla^2 S_{n}(\T_n)^\top(\widehat{\T}_n-\T_0),$$surely,
where $\T_n$ is a point between $\widehat{\T}_n$ and $\T_0$, and $\vc\nabla^2 S_{n}(\T_n)$ is the Jacobian matrix of $S_{n}(\cdot)$ at $\T_n$. Then,
\begin{equation}\label{eq:pikachu}\sqrt{n}(\widehat{\T}_n-\T_0)=-\left(\vc\nabla^2 S_{n}(\T_n)\right)^{-1}\sqrt{n}\vc\nabla S_{n}(\T_0),\end{equation}
where we used the fact that $\vc\nabla^2 S_{n}(\T_n)$ is a symmetric matrix.
By Assumption \ref{cond:bound-m-arrow}, we can interchange derivatives and infinite sum in the function $S_n(\cdot)$ and invert $\vc\nabla^2 S_{n}(\T_n)$ for $n$ large enough (given \ref{eq:C-normality-wlse-help2} below). Given Equation \eqref{eq:pikachu}, by Slutsky's theorem, it is then sufficient to establish \emph{(a)} and \emph{(b)} below:
\begin{enumerate}[label=\emph{(\alph*)},ref=\emph{(\alph*)}]
\item If $\blue{\Phi_{\zeta(\T_0)}(\cdot)}$ is the distribution function of a $d$-dimensional normal distribution with mean vector ${\vc 0}=(0,\ldots,0)^\top$ and covariance matrix $\vc\zeta(\T_0)$ and $x_1,\ldots,x_d\in\R$, then \label{eq:C-normality-wlse-help1}
$$\lim_{n\to\infty} \mbP_i\left[-\sqrt{n}\frac{\partial S_{n}(\T_0)}{\partial\theta_j} \leq x_j,\ j=1,\ldots,d\ \bigg|Z_n>0\right]=\blue{\Phi}_{\zeta(\T_0)}(x_1,\ldots,x_d).$$

\item For each $\epsilon>0$, \label{eq:C-normality-wlse-help2}
$$\lim_{n\to\infty} \mbP_i\left[\bigg|\frac{\partial^2 S_{n}(\T_n)}{\partial\theta_j\partial\theta_l}-\eta_{jl}(\T_0)\bigg|>\epsilon|Z_n>0\right]=0,\quad j,l=1,\ldots,d.$$
\end{enumerate}

\vspace{2ex}

To prove \ref{eq:C-normality-wlse-help1}, we denote by $\nabla\vc\mu(\T_0)$ the infinite matrix with $d$ rows whose $z$-th column is the column vector $\nabla m^\uparrow(z,\T_0)$, and we let $\nabla\vc\mu_k(\T_0)$ be the matrix of order $d\times k$ whose $j$-th column is $\nabla m^\uparrow(j,\T_0)$. Moreover, for $n,z,k\in\N$, we write
\begin{align*}
\hat{I}_n(z)&=\hat{w}_n(z)\sqrt{n}(m^\uparrow(z,\T_0)-\hat{m}_n(z)),\\
U_{nk}&=\nabla\vc\mu_k(\T_0)(\hat{I}_n(1),\ldots,\hat{I}_n(k))^\top,\\
\rho(k,\T_0)&=\nabla\vc\mu_k(\T_0)\text{diag}\{w_1(\T_0)^2\gamma(1,\T_0),\ldots,w_k(\T_0)^2\gamma(k,\T_0)\}\nabla\vc\mu_k(\T_0)^\top,
\end{align*}
where we recall the definition of $\gamma(z,\T_0)$ in \eqref{gamma} 
 \blue{and let \linebreak $\text{diag}\{w_1(\T_0)^2\gamma(1,\T_0),\ldots,w_k(\T_0)^2\gamma(k,\T_0)\}$ be the diagonal matrix of order $k$ whose $(i,i)$-th element is $w_i(\T_0)^2\gamma(i,\T_0)$. Because $-\sqrt{n}\nabla S_n(\T_0)=2\lim_{k\to\infty} U_{nk}$, we then apply a counterpart of \cite[Theorem 25.5]{Bi-79} for random vectors, which states that, for random vectors $X_n^{(k)}$, $X^{(k)}$, and $Y_n$ satisfying
\begin{enumerate} 
\item[\emph{(i)}] for each $k$, $X_n^{(k)}\stackrel{d}{\to} X^{(k)}$ as $n\to\infty$, 
\item[\emph{(ii)}] $X^{(k)}\stackrel{d}{\to} X$ as $k\to\infty$, and 
\item[\emph{(iii)}] $\lim_{k\to\infty} \limsup_{n\to\infty} \mbP[\|X_n^{(k)}-Y_n\|\geq \varepsilon]=0 $ for all $\varepsilon>0$,
\end{enumerate}
we have $Y_n\stackrel{d}{\to} X$, where in our case $X_n^{(k)}=U_{nk}$ and $Y_n=\blue{\frac{\sqrt{n}}{2}}\cdot\nabla S_n(\T_0)$}. 
%%%%%%
We now establish  \emph{(i)--(iii)}.
\begin{enumerate}[label=\emph{(a-\roman*)},ref=\emph{(a-\roman*)}]
\item For each $k\in\N$ fixed we have
\blue{\begin{equation*}
\lim_{n\to\infty}\mbP_i[U_{nk}\leq \vc x|Z_n>0]=\Phi_{\rho(k,\T_0)}(\vc x),\quad \vc x\in\R^d,
\end{equation*}}
where $\Phi_{\rho(k,\T_0)}(\cdot)$ is the distribution function of a \blue{$d$-dimensional} normal distribution with mean equal \blue{to the vector $\vc 0$} and \blue{covariance matrix} $\rho(k,\T_0)$. Indeed, this follows by applying \eqref{eq:Q-normality-m} and \eqref{eq:Q-indep-m} in combination with Assumption (A1) and Slutsky's theorem.

\item Let \blue{$U_k$ be a $d$-dimensional normal distribution with mean equal to the vector $\vc 0$ and covariance matrix $\rho(k,\T_0)$. Then, as $k\to\infty$, $U_k$ converges in distribution to a normal distribution with mean $\vc 0$ and covariance matrix whose $(i,j)$-th element is
$\lim_{k \to \infty} (\rho(k,\T_0))_{i,j}=\zeta_{ij}(\T_0)/4$.}%\footnote{what is the index $j$ here?}

\item It remains to show that, for each $\epsilon>0$, \blue{and $j=1,\ldots,d$, if $U_{nk}(j)$ is the $j$-th coordinate of the vector $U_{nk}$ then}
\begin{align*}
\lim_{k\to\infty}\bigg(\limsup_{n\to\infty}\mbP_i\bigg[\bigg|\sum_{z=1}^\infty\frac{\partial m^\uparrow(z,\T_0)}{\partial \theta_j}  \hat{w}_n(z) \sqrt{n}(m^\uparrow(z,\T_0)-\hat{m}_n(z))-\blue{U_{nk}(j)}\bigg|>\epsilon|Z_n>0\bigg]\bigg)=0,
\end{align*}
or equivalently
\begin{align*}
\lim_{k\to\infty}\bigg(\limsup_{n\to\infty}\mbP_i\bigg[\sum_{z=k+1}^\infty\frac{\partial m^\uparrow(z,\T_0)}{\partial \theta_j}  \hat{w}_n(z) \sqrt{n}(m^\uparrow(z,\T_0)-\hat{m}_n(z))>\epsilon|Z_n>0\bigg]\bigg)=0.
\end{align*}
In the case where there exists a value $z^*<\infty$ such that for all $n\in\mbN$ and $z\geq z^*$, $\hat{w}_n(z)=0$, \emph{(a-iii)} is immediate. When $\hat{w}_n(z)= \hat{w}^{(1)}_n(z)$ or $\hat{w}_n(z)= \hat{w}^{(2)}_n(z)$, \emph{(a-iii)} is more  technically demanding, and consequently we postpone the proof of \emph{(a-iii)} in that case until Lemma~\ref{aditional_lemma} in Appendix \ref{appA}.
\end{enumerate}

Finally, in order to prove \ref{eq:C-normality-wlse-help2}, let us fix $j,l=1,\ldots,d$, and write
\begin{align*}
T_n^{(1)}&=\sum_{z=1}^\infty \frac{\partial^2 m^\uparrow(z,\T_n)}{\partial \theta_j\partial \theta_l}  \hat{w}_n(z) (m^\uparrow(z,\T_n)-\hat{m}_n(z)),\quad n\in\N\\
T_n^{(2)}&=\sum_{z=1}^\infty \frac{\partial m^\uparrow(z,\T_n)}{\partial \theta_j} \cdot\frac{\partial m^\uparrow(z,\T_n)}{\partial \theta_l}  \hat{w}_n(z)-\frac{\eta_{jl}(\T_0)}{2},\quad n\in\N.
\end{align*}
Because $\frac{\partial^2 S_{n}(\T_n)}{\partial\theta_j\partial\theta_l}-\eta_{jl}(\T_0)=2(T_n^{(1)}+T_n^{(2)})$, by the triangle inequality, it suffices to prove that, for each $\epsilon>0$,
\begin{enumerate}[label=\emph{(b-\arabic*)},ref=\emph{(b-\arabic*)}]
\item $\lim_{n\to\infty} \mbP_i\big[\big|T_n^{(1)}\big|>\epsilon|Z_n>0\big]=0$,\label{eq:C-normality-wlse-help2-a}
\item $\lim_{n\to\infty} \mbP_i\big[\big|T_n^{(2)}\big|>\epsilon|Z_n>0\big]=0$.\label{eq:C-normality-wlse-help2-b}
\end{enumerate}

Let us start with \ref{eq:C-normality-wlse-help2-a}. We write
\begin{align*}
T_{1n}^{(1)}&=\sum_{z=1}^\infty \bigg|\frac{\partial^2 m^\uparrow(z,\T_n)}{\partial \theta_i\partial \theta_j} \bigg|  \cdot |m^\uparrow(z,\T_n)-\hat{m}_n(z)|\cdot |\hat{w}_n(z)-w_z(\T_0)|,\\
T_{2n}^{(1)}&=\sum_{z=1}^\infty \bigg|\frac{\partial^2 m^\uparrow(z,\T_n)}{\partial \theta_i\partial \theta_j} \bigg| \cdot |m^\uparrow(z,\T_n)-\hat{m}_n(z)|w_z(\T_0).
\end{align*}
Since $\big|T_n^{(1)}\big|\leq T_{1n}^{(1)}+T_{2n}^{(1)}$, then
$$\Big\{T_{1n}^{(1)}\leq\frac{\epsilon}{2}\Big\}\cap \Big\{T_{2n}^{(1)}\leq\frac{\epsilon}{2}\Big\}\subseteq\Big\{\big|T_n^{(1)}\big|\leq\epsilon\Big\},$$
and thus, it suffices to prove that $\lim_{n\to\infty}\mbP_i\Big[T_{\ell n}^{(1)}\leq\epsilon/2\big|Z_n>0\Big]=1$, for $\ell=1,2$.

Recall the definition of $C$ from Assumption \ref{cond:m-arrow-hat-unif-bounded} and $\mathcal C\equiv\mathcal C(\T_0)$ from Assumption \ref{cond:bound-m-arrow}.  As in the proof of Theorem \ref{thm:C-consistency-WLSE-1}, let us write $A_n=\sup_{z\in\N}|m^\uparrow(z,\T_0)-\hat{m}_n(z)|$, and take $\delta_1>0$ sufficiently small so that $\{\T\in\Theta:\|\T-\T_0\|\leq \delta_1\}\subseteq \mathcal{C}$. On the one hand,
if we define the event $B_{1n}:=\{\|\T_n-\T_0\|\leq \delta_1\}\cap\{A_n\leq C\}$, then
\begin{align}\label{eq:prob-0-Bn}
\lim_{n\to\infty}\mbP_i[B_{1n}|Z_n>0]=1,
\end{align}
where we have used Assumption \ref{cond:m-arrow-hat-unif-bounded}, and Theorem \ref{thm:C-consistency-WLSE-1} in combination with the fact
 that $\T_n=(1-\lambda)\widehat{\T}_n'+\lambda\T_0$, for some $0<\lambda<1$, thus,
$$\|\T_n-\T_0\|\leq (1-\lambda)\|\widehat{\T}_n'-\T_0\|\leq \|\widehat{\T}_n'-\T_0\|.$$

On the other hand, by Assumption \ref{cond:bound-m-arrow}-(b) and the triangle inequality, on $B_{1n}$ we have
\begin{align*}
T_{1n}^{(1)}&\leq M_2^* \sum_{z=1}^\infty \left(|m^\uparrow(z,\T_n)-m^\uparrow(z,\T_0)|+|m^\uparrow(z,\T_0)-\hat{m}_n(z)|\right)|\hat{w}_n(z)-w_z(\T_0)|\\
&\leq M_2^*(M+C)\sum_{z=1}^\infty |\hat{w}_n(z)-w_z(\T_0)|,
\end{align*}
where in the second inequality we used Assumptions \ref{cond:m-arrow-unif-bounded} and \ref{cond:m-arrow-hat-unif-bounded}.
Therefore, by Assumption~(A1) and Lemma \ref{lem-sum},
{\small\begin{align*}
\lim_{n\to\infty} \mbP_i\left[\Big\{T_{1n}^{(1)}>\frac{\epsilon}{2}\Big\}\cap B_{1n}|Z_n>0\right]\leq \lim_{n\to\infty} \mbP_i\left[ \sum_{z=1}^\infty |\hat{w}_n(z)-w_z(\T_0)|>\frac{\epsilon}{2M_2^*(M+C)}\bigg|Z_n>0\right]=0.
\end{align*}}
Now, using \eqref{eq:prob-0-Bn} we get $\lim_{n\to\infty}\mbP_i\Big[T_{1n}^{(1)}\leq\epsilon/2\big|Z_n>0\Big]=1$.

On the other hand, for $T^{(1)}_{2n}$, we note that since ${\vc w(\T_0)}$ is a probability distribution
we can select $K_1=K_1(\epsilon)\in\N$ such that
$$\sum_{z=K_1+1}^\infty w_z(\T_0)\leq \frac{\epsilon}{6 M_2^*(M+C)}.$$
Moreover, by Assumption \ref{cond:bound-m-arrow}-(a) we can choose $0<\delta_2=\delta_2(\epsilon)<\delta_1$ such that on the event $B_{2n}=\{\|\T_n-\T_0\|\leq \delta_2\}$ we have
$$\max_{z=1,\ldots,K_1}|m^\uparrow(z,\T_n)-m^\uparrow(z,\T_0)|\leq \frac{\epsilon}{6M_2^* K_1},$$
where we observe that by the same arguments as for \eqref{eq:prob-0-Bn},
\begin{equation}\label{B2n}\lim_{n\to\infty}\mbP_i[B_{2n}|Z_n>0]=1.\end{equation}
In addition, on $\{A_n\leq C\}\cap B_{2n}$, we have
\begin{eqnarray*}
T_{2n}^{(1)}&\leq& M_2^* \sum_{z=1}^{K_1}|m^\uparrow(z,\T_0)-\hat{m}_n(z)|+M_2^* \sum_{z=1}^{K_1}|m^\uparrow(z,\T_n)-m^\uparrow(z,\T_0)|\\
&&\phantom{\leq}+M_2^*(M+C)\sum_{z=K_1+1}^\infty w_z(\T_0)\\&\leq& M_2^* \sum_{z=1}^{K_1}|m^\uparrow(z,\T_0)-\hat{m}_n(z)|+ \frac{\epsilon}{6}+ \frac{\epsilon}{6}
\end{eqnarray*}
As a consequence, we obtain
\begin{eqnarray*}\lefteqn{
\bigg\{T_{2n}^{(1)}>\frac{\epsilon}{2}\bigg\}\cap B_{2n}\cap \{A_n\leq C\}}\\
&\subseteq& \left\{ \exists z\in\{1,\ldots,K_1\}: |m^\uparrow(z,\T_0)-\hat{m}_n(z)|> \frac{\epsilon}{6 M_2^* K_1}\right\}\cap B_{2n}\cap \{A_n\leq C\}.
\end{eqnarray*}
Then, by Assumption \ref{cond:m-arrow-hat-unif-bounded} and Theorem \ref{thm:C-consistency-WLSE-1}, together with  \eqref{eq:Q-consistency-m}, we obtain
$$\lim_{n\to\infty} \mbP_i\left[\Big\{T_{2n}^{(1)}>\frac{\epsilon}{2}\Big\}\cap B_{2n}\cap \{A_n\leq C\}|Z_n>0\right]=0.$$
This yields $\lim_{n\to\infty}\mbP_i\Big[T_{2n}^{(1)}\leq\epsilon/2\big|Z_n>0\Big]=1$.

\vspace{2ex}

Now, to prove \ref{eq:C-normality-wlse-help2-b} we observe that $\big|T_n^{(2)}\big|\leq T_{1n}^{(2)}+T_{2n}^{(2)}$, with
\begin{align*}
T_{1n}^{(2)}&=\sum_{z=1}^\infty \bigg|\frac{\partial m^\uparrow(z,\T_n)}{\partial \theta_j}\cdot\frac{\partial m^\uparrow(z,\T_n)}{\partial \theta_l}\bigg| |\hat{w}_n(z)-w_z(\T_0)|,\\
T_{2n}^{(2)}&=\sum_{z=1}^\infty \bigg|\frac{\partial m^\uparrow(z,\T_n)}{\partial \theta_j} \cdot\frac{\partial m^\uparrow(z,\T_n)}{\partial \theta_l}-\frac{\partial m^\uparrow(z,\T_0)}{\partial \theta_j}\cdot \frac{\partial m^\uparrow(z,\T_0)}{\partial \theta_l}\bigg| w_z(\T_0),
\end{align*}
hence, since $\Big\{T_{1n}^{(2)}\leq\frac{\epsilon}{2}\Big\}\cap \Big\{T_{2n}^{(2)}\leq\frac{\epsilon}{2}\Big\}\subseteq\Big\{\big|T_n^{(2)}\big|\leq\epsilon\Big\}$, it is enough to prove that
\begin{equation}\label{eq:gl22CL}
\lim_{n\to\infty}\mbP_i\left[T_{\ell n}^{(2)}\leq\frac{\epsilon}{2}\big|Z_n>0\right]=1,\quad \text{ for }\ell=1,2.
\end{equation}

For $T_{1n}^{(2)}$, by Assumption \ref{cond:bound-m-arrow}-\ref{cond:m-arrow-bound-deriv}, and the fact that on $B_{2n}$ we have $\T_n\in\mathcal{C}$,
\begin{align*}
B_{2n}\cap\left\{\sum_{z=1}^\infty |\hat{w}_n(z)-w_z(\T_0)|\leq \frac{\epsilon}{2M_1^{*2}}\right\}\subseteq \Big\{T_{1n}^{(2)}\leq \frac{\epsilon}{2}\Big\},
\end{align*}
which, combined with \eqref{B2n} and Lemma \ref{lem-sum}, leads to
\eqref{eq:gl22CL} for $\ell=1$.

For $T_{2n}^{(2)}$, we take $K_2=K_2(\epsilon)\in\N$ such that
$$\sum_{z=K_2+1}^\infty w_z(\T_0)\leq \frac{\epsilon}{8M_1^{*2}}.$$
Since, for each $j=1,\ldots,d$ and $z\in\N$ fixed, the function $\T\in\Theta\mapsto \frac{\partial m^\uparrow(z,\T)}{\partial \theta_j}$ is continuous at $\T_0$ by Assumption \ref{cond:bound-m-arrow}-\ref{cond:m-arrow-bound-deriv}, we can  choose $\delta_3=\delta_3(\epsilon)>0$ such that on $B_{3n}=\{\|\T_n-\T_0\|\leq\delta_3\}$,
\begin{align*}
\max_{z=1,\ldots,K_2}\bigg|\frac{\partial m^\uparrow(z,\T_n)}{\partial \theta_j}\cdot \frac{\partial m^\uparrow(z,\T_n)}{\partial \theta_l}-\frac{\partial m^\uparrow(z,\T_0)}{\partial \theta_j}\cdot \frac{\partial m^\uparrow(z,\T_0)}{\partial \theta_l}\bigg|\leq\frac{\epsilon}{4 K_2}.
\end{align*}
On $B_{3n}$, we then have
\begin{eqnarray*}T_{2n}^{(2)}&=& \sum_{z=1}^{K_2} \bigg|\frac{\partial m^\uparrow(z,\T_n)}{\partial \theta_j} \cdot\frac{\partial m^\uparrow(z,\T_n)}{\partial \theta_l}-\frac{\partial m^\uparrow(z,\T_0)}{\partial \theta_j}\cdot \frac{\partial m^\uparrow(z,\T_0)}{\partial \theta_l}\bigg| w_z(\T_0)\\&&+\sum_{z=K_2+1}^\infty \bigg|\frac{\partial m^\uparrow(z,\T_n)}{\partial \theta_j} \cdot\frac{\partial m^\uparrow(z,\T_n)}{\partial \theta_l}-\frac{\partial m^\uparrow(z,\T_0)}{\partial \theta_j}\cdot \frac{\partial m^\uparrow(z,\T_0)}{\partial \theta_l}\bigg| w_z(\T_0)\\&\leq& \frac{\epsilon}{4}+\frac{\epsilon}{4},
\end{eqnarray*}
where we have bounded $w_z(\T_0)$ by 1 in the first sum and we have used Assumption \ref{cond:bound-m-arrow}-\ref{cond:m-arrow-bound-deriv} in the second sum.
Because
$\lim_{n\to\infty}\mbP_i[B_{3n}|Z_n>0]=1$ by the same argument as for \eqref{eq:prob-0-Bn},
we obtain \eqref{eq:gl22CL} for $\ell=2$.
\qed

\medskip

\blue{
\noindent\textbf{Proof of Proposition \ref{thm:Q-consistency-WLSE-1}.}

Let
$$\tilde{S}(\T)=\sum_{z=1}^\infty w_z(\T_0) (m(z,\T)-m^\uparrow(z,\T_0))^2,$$
so that $\tilde{\T}=\arg\min_{\T\in\theta} \tilde{S}(\T)$, and let
\begin{align*}
\tilde{S}_n^*(\T)=\sum_{z=1}^\infty w_z(\T_0)\ind{j_n(z)>0} (m(z,\T)-\hat{m}_n(z))^2,
\end{align*}
where $j_n(z):=\sum_{i=0}^{n-1} \ind{Z_i=z}$. Let us fix the initial state $i\in\N$. Similar to Theorem~\ref{thm:C-consistency-WLSE-1}, by Lemma \ref{lem:help-consistency}, it suffices to prove that, for each $\epsilon>0$,
\begin{enumerate}[label=\emph{(\alph*)},ref=\emph{(\alph*)}]
\item $\lim_{n\to\infty} \mbP_i\left[\sup_{\T\in\Theta}\left|\tilde{S}_n(\T)-\tilde{S}_n^*(\T)\right|>\epsilon|Z_{n}>0\right]=0$,\label{eq:lim-sup-lim-classic-weighted1-help1}
\item $\lim_{n\to\infty} \mbP_i\left[\sup_{\T\in\Theta}\left|\tilde{S}_n^*(\T)-\tilde{S}(\T)\right|>\epsilon|Z_{n}>0\right]=0$.\label{eq:lim-sup-lim-classic-weighted1-help2}
\end{enumerate}
We would then get
\begin{align*}
\forall\epsilon>0,\quad\lim_{n\to\infty} \mbP_i\left[\sup_{\T\in\Theta}\left|\tilde{S}_{n}(\T)-\tilde{S}(\T)\right|>\epsilon|Z_{n}>0\right]=0.
\end{align*}

We start with the proof of \ref{eq:lim-sup-lim-classic-weighted1-help1}. Recall the definition of $M$, $\widetilde{M},$ and $C$ from Assumptions \ref{cond:m-arrow-unif-bounded}, \ref{cond:m-unif-bounded}, and \ref{cond:m-arrow-hat-unif-bounded}, respectively. Let $M'=2(M+\widetilde{M}+C)^2$ and $A_n=\sup_{z\in\N} |m^\uparrow(z,\T_0)-\hat{m}_n(z)|$. On the set $\{A_n\leq C\}$ we have
\begin{align*}
\sup_{\T\in\Theta} |\tilde{S}_n(\T)&-\tilde{S}_n^*(\T)|\leq\sup_{\T\in\Theta} \sum_{z=1}^\infty |\hat{w}_n(z)-w_z(\T_0)\ind{j_n(z)>0}| (m(z,\T)-\hat{m}_n(z))^2
\\
%&=\sup_{\T\in\Theta} \sum_{z=1}^\infty |\hat{w}_n(z)-w_z(\T_0)\ind{j_n(z)>0}| (m(z,\T)-m^\uparrow(z,\T_0)+m^\uparrow(z,\T_0)-\hat{m}_n(z))^2\\
&\leq \sup_{\T\in\Theta} \sum_{z=1}^\infty |\hat{w}_n(z)-w_z(\T_0)\ind{j_n(z)>0}| (m(z,\T)-m^\uparrow(z,\T_0))^2\\
&\phantom{=}+ \sum_{z=1}^\infty |\hat{w}_n(z)-w_z(\T_0)\ind{j_n(z)>0}| (m^\uparrow(z,\T_0)-\hat{m}_n(z))^2\\
&\phantom{=}+2\sup_{\T\in\Theta} \sum_{z=1}^\infty |\hat{w}_n(z)-w_z(\T_0)\ind{j_n(z)>0}|\cdot |m(z,\T)-m^\uparrow(z,\T_0)|\cdot |m^\uparrow(z,\T_0)-\hat{m}_n(z)|\\
&\leq M' \sum_{z=1}^\infty |\hat{w}_n(z)-w_z(\T_0)\ind{j_n(z)>0}|\\
%&\leq M' \sum_{z=1}^\infty |\hat{w}_n(z)-w_z(\T_0)+w_z(\T_0)-w_z(\T_0)\ind{j_n(z)>0}|\\
&\leq M'  \sum_{z=1}^\infty |\hat{w}_n(z)-w_z(\T_0)|+M'  \sum_{z=1}^\infty w_z(\T_0)\ind{j_n(z)=0}.
\end{align*}
Consequently,
\begin{align*}
\left\{\sup_{\T\in\Theta} |\tilde{S}_n(\T)-\tilde{S}_n^*(\T)|>\epsilon\right\}&=\left\{\sup_{\T\in\Theta} |\tilde{S}_n(\T)-\tilde{S}_n^*(\T)|>\epsilon,A_n\leq C\right\}\cup\\
&\phantom{=}\cup\left\{\sup_{\T\in\Theta} |\tilde{S}_n(\T)-\tilde{S}_n^*(\T)|>\epsilon,A_n> C\right\}\\
%&\subseteq\left\{\sum_{z=1}^\infty |\hat{w}_n(z)-w_z(\T_0)|>\frac{\epsilon}{2M'},A_n\leq C\right\}\cup\\
%&\phantom{=}\cup\left\{\sum_{z=1}^\infty w_z(\T_0)\ind{j_n(z)=0}>\frac{\epsilon}{2M'},A_n\leq C\right\} \cup\left\{A_n> C\right\}\\
&\subseteq\left\{\sum_{z=1}^\infty |\hat{w}_n(z)-w_z(\T_0)|>\frac{\epsilon}{2M'}\right\}\cup\left\{\sum_{z=1}^\infty w_z(\T_0)\ind{j_n(z)=0}>\frac{\epsilon}{2M'}\right\}\\
&\phantom{=}\cup\left\{A_n> C\right\}.
\end{align*}
Conditioning on $Z_n>0$, we then obtain \ref{eq:lim-sup-C-consist-weighted1-help1} by applying Lemma \ref{lem-sum} to the first event, the fact that $\sum_{z=1}^\infty w_z(\T_0)=1$, $\lim_{n\to\infty} \mbP[\ind{j_n(z)=0}\leq\epsilon|Z_n>0]=1$ for any $\epsilon>0,z\in\N$, and dominated convergence  to the second event, and Assumption \ref{cond:m-arrow-hat-unif-bounded} to the third event.

%\vspace{2ex}

In order to prove \ref{eq:lim-sup-lim-classic-weighted1-help2}, we observe that
{\small\begin{align*}
\tilde{S}_{n}^*(\T)-\tilde{S}(\T)
%&= \sum_{z=1}^\infty w_z(\T_0) \ind{j_n(z)>0}\left[(m(z,\T)-\hat{m}_n(z))^2-(m(z,\T)-m^\uparrow(z,\T_0))^2\right]\\
%&\phantom{=}-\sum_{z=1}^\infty w_z(\T_0) \ind{j_n(z)=0}(m(z,\T)-m^\uparrow(z,\T_0))^2\\
&= \sum_{z=1}^\infty w_z(\T_0) \ind{j_n(z)>0}\left[\hat{m}_n(z)^2 -2\hat{m}_n(z)m(z,\T)-m^\uparrow(z,\T_0)^2+2m(z,\T)m^\uparrow(z,\T_0)\right]\\
&\phantom{=}-\sum_{z=1}^\infty w_z(\T_0) \ind{j_n(z)=0}(m(z,\T)-m^\uparrow(z,\T_0))^2,
\end{align*}}
then
\begin{align*}
\sup_{\T\in\Theta}|\tilde{S}_{n}^*(\T)-\tilde{S}(\T)|&\leq  \sum_{z=1}^\infty w_z(\T_0) \ind{j_n(z)>0}\left|\hat{m}_n(z)^2 -m^\uparrow(z,\T_0)^2\right|\\
&\phantom{=}+2\sup_{\T\in\Theta}\sum_{z=1}^\infty w_z(\T_0) \ind{j_n(z)>0}m(z,\T)\left|\hat{m}_n(z)-m^\uparrow(z,\T_0)\right|\\
&\phantom{=}+\sup_{\T\in\Theta}\sum_{z=1}^\infty w_z(\T_0) \ind{j_n(z)=0}(m(z,\T)-m^\uparrow(z,\T_0))^2,
\end{align*}
and consequently,
\begin{align*}
\left\{\sup_{\T\in\Theta}|\tilde{S}_{n}^*(\T)-\tilde{S}(\T)|>\epsilon\right\}&\subseteq\left\{\sum_{z=1}^\infty w_z(\T_0) \ind{j_n(z)>0}\left|\hat{m}_n(z)^2 -m^\uparrow(z,\T_0)^2\right|>\frac{\epsilon}{3}\right\}\cup\\
&\phantom{\subseteq}\cup\left\{\sup_{\T\in\Theta}\sum_{z=1}^\infty w_z(\T_0) \ind{j_n(z)>0}m(z,\T)\left|\hat{m}_n(z)-m^\uparrow(z,\T_0)\right|>\frac{\epsilon}{6}\right\}\\
&\phantom{\subseteq}\cup\left\{\sup_{\T\in\Theta}\sum_{z=1}^\infty w_z(\T_0) \ind{j_n(z)=0}(m(z,\T)-m^\uparrow(z,\T_0))^2>\frac{\epsilon}{3}\right\}.
\end{align*}

Regarding the first event, similar to the proof of Theorem~\ref{thm:C-consistency-WLSE-1}, we can focus on the event $\{A_n \leq C\}$. In that proof, we showed that
\begin{align*}
\lim_{n\to\infty}\mbP_i\bigg[\sum_{z=1}^\infty w_z(\T_0) \ind{j_n(z)>0}\left|\hat{m}_n(z)^2 -m^\uparrow(z,\T_0)^2\right|\leq \frac{\epsilon}{3}|Z_n>0\bigg]=1.
\end{align*}
For the second event,
we observe that
\begin{align*}
\frac{\epsilon}{6}&<\sup_{\T\in\Theta}\sum_{z=1}^\infty w_z(\T_0) \ind{j_n(z)>0}m(z,\T)\left|\hat{m}_n(z)-m^\uparrow(z,\T_0)\right|\\
&\leq \widetilde{M}\sum_{z=1}^\infty w_z(\T_0) \ind{j_n(z)>0}\left|\hat{m}_n(z)-m^\uparrow(z,\T_0)\right|,
\end{align*}
and we proceed using similar arguments as for the first event to show that
\begin{align*}
\lim_{n\to\infty}\mbP_i\bigg[\sum_{z=1}^\infty w_z(\T_0) \ind{j_n(z)>0}\left|\hat{m}_n(z) -m^\uparrow(z,\T_0)\right|\leq \frac{\epsilon}{6}|Z_n>0\bigg]=1.
\end{align*}

Finally, for the third event, we use the fact that $\sup_{\T\in\Theta}\sup_{z\in\N}\left(m(z,\T)-m^\uparrow(z,\T_0)\right)^2\leq (M+\widetilde{M})^2$ and the same argument as for the second event in \emph{(a)} to show that
\begin{align*}
\lim_{n\to\infty}\mbP_i\bigg[\sup_{\T\in\Theta}\sum_{z=1}^\infty w_z(\T_0) \ind{j_n(z)=0}(m(z,\T)-m^\uparrow(z,\T_0))^2\leq \frac{\epsilon}{3}|Z_n>0\bigg]=1.
\end{align*}
Consequently we have proved \ref{eq:lim-sup-lim-classic-weighted1-help2}, and by Lemma~\ref{lem:help-consistency} the result follows. \qed

}

\section*{Acknowledgements}
Peter Braunsteins and Sophie Hautphenne would like to thank the Australian Research Council (ARC) for support through the Discovery Project DP200101281.

This manuscript was started while Carmen Minuesa was a visiting postdoctoral researcher in the School of Mathematics and Statistics at The University of Melbourne, and she is grateful for the hospitality and collaboration. She also acknowledges the Australian Research Council Centre of Excellence for Mathematical and Statistical Frontiers for partially supporting her research visit at The University of Melbourne. Carmen Minuesa's research is part of the R\&D\&I project PID2019-108211GB-I00, funded by MCIN/AEI/10.13039/501100011033/.

\appendix

\section{Complement to the proof of Theorem \ref{thm:C-normality-WLSE-1}}\label{appA}

The following lemma establishes \emph{(a-iii)} in the proof of Theorem \ref{thm:C-normality-WLSE-1}.

\begin{lem}\label{aditional_lemma} Under the conditions of Theorem \ref{thm:C-normality-WLSE-1}, we have for $\hat{w}_n(z)= \hat{w}^{(1)}_n(z)$ and $\hat{w}_n(z)= \hat{w}^{(2)}_n(z)$,
\begin{align*}
\lim_{k\to\infty}\bigg(\limsup_{n\to\infty}\mbP_i\bigg[\left|\sum_{z=k+1}^\infty\frac{\partial m^\uparrow(z,\T_0)}{\partial \theta_i}  \hat{w}_n(z) \sqrt{n}(m^\uparrow(z,\T_0)-\hat{m}_n(z))\right|>\epsilon|Z_n>0\bigg]\bigg)=0.
\end{align*}
\end{lem}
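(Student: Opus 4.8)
The plan is to prove the statement by a second-moment (Chebyshev) argument after rewriting the random tail sum as a normalised additive functional of the trajectory.

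\emph{Step 1 (Reduction to a time-average).} Writing $j_n(z)=\sum_{\ell=0}^{n-1}\ind{Z_\ell=z}$ and using the explicit form of $\hat m_n(z)$ together with the identity $m^\uparrow(z,\T_0)-\hat m_n(z)=-\big(z\,j_n(z)\big)^{-1}\sum_{\ell=0}^{n-1}(Z_{\ell+1}-z\,m^\uparrow(z,\T_0))\ind{Z_\ell=z}$ on $\{j_n(z)>0\}$, the factor $j_n(z)$ cancels in each summand. For $\hat w_n=\hat{w}^{(1)}_n$ this gives, after interchanging the (finite-per-$\ell$) sums,
$$\sum_{z=k+1}^\infty \frac{\partial m^\uparrow(z,\T_0)}{\partial\theta_i}\,\hat{w}^{(1)}_n(z)\,\sqrt n\,(m^\uparrow(z,\T_0)-\hat m_n(z))=-\frac{1}{\sqrt n}\sum_{\ell=0}^{n-1}g(Z_\ell)\,D_\ell\,\ind{Z_\ell>k},$$
with $g(z)=z^{-1}\tfrac{\partial}{\partial\theta_i}m^\uparrow(z,\T_0)$ and $D_\ell:=Z_{\ell+1}-Z_\ell\,m^\uparrow(Z_\ell,\T_0)$. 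For $\hat w_n=\hat{w}^{(2)}_n$ the same computation produces $g(z)=\tfrac{\partial}{\partial\theta_i}m^\uparrow(z,\T_0)$ and an extra prefactor $n/\sum_{\ell=0}^{n-1}Z_\ell$, which converges in probability (conditional on $Z_n>0$) to $(\sum_{j\ge1}j\,u_jv_j)^{-1}>0$ by \cite[Lemma 6]{braunsteins2022parameter}; by Slutsky it suffices to treat the sum without this prefactor. In both cases $|g|\le M_1^*$ by Assumption~\ref{cond:bound-m-arrow}\ref{cond:m-arrow-bound-deriv}, so it remains to show $\lim_{k\to\infty}\limsup_{n\to\infty}\mbP_i[|V_{n,k}|>\epsilon\,|\,Z_n>0]=0$ for
$$V_{n,k}:=\frac{1}{\sqrt n}\sum_{\ell=0}^{n-1}g(Z_\ell)\,D_\ell\,\ind{Z_\ell>k}.$$

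\emph{Step 2 (Second moment, diagonal part).} By Chebyshev's inequality it is enough to prove $\lim_{k\to\infty}\limsup_{n\to\infty}\mbE_i[V_{n,k}^2\,|\,Z_n>0]=0$. Writing $\Delta_\ell:=g(Z_\ell)D_\ell\ind{Z_\ell>k}$ and $\Sigma_n(k):=\mbE_i[(\sum_{\ell}\Delta_\ell)^2\,|\,Z_n>0]=n\,\mbE_i[V_{n,k}^2\,|\,Z_n>0]$, I split $\Sigma_n(k)$ into the diagonal $\sum_\ell\mbE_i[\Delta_\ell^2\,|\,Z_n>0]=:n\,D_n(k)$ and twice the off-diagonal $\sum_{\ell<\ell'}\mbE_i[\Delta_\ell\Delta_{\ell'}\,|\,Z_n>0]$. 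The diagonal is the harmless part: a Cesàro argument for the conditioned chain (using $j_n(z)/n\to u_zv_z$ as in Lemma~\ref{weights}) together with the uniform moment bounds~\ref{cond:bounded-moments-xi} shows that $\limsup_n D_n(k)=:D(k)$ is dominated by the tail $\sum_{z>k}u_zv_z\,\sigma^{2\uparrow}(z,\T_0)\,\|\nabla m^\uparrow(z,\T_0)\|^2$ (precisely the tail of the series defining $\zeta(\T_0)$), and $D(k)\to0$ as $k\to\infty$ because $\{u_zv_z\}$ has light tails by~\ref{cond:bounded-moments-xi} while $\sigma^{2\uparrow}(z,\T_0)$ grows at most polynomially.

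\emph{Step 3 (Main obstacle: the off-diagonal terms).} The difficulty is that, conditional on $\{Z_n>0\}$, $\{Z_\ell\}_{0\le\ell\le n}$ is the \emph{time-inhomogeneous} chain~\eqref{condProb}, so $\{D_\ell\}$ is \emph{not} a martingale-difference sequence: for $\ell'<n$, conditioning on $\mathcal F_{\ell'}=\sigma(Z_0,\dots,Z_{\ell'})$ gives $\mbE_i[D_{\ell'}\,|\,\mathcal F_{\ell'},Z_n>0]=Z_{\ell'}\,b^{(n)}_{\ell'}(Z_{\ell'})$, where the bias $b^{(n)}_{\ell'}(z)$ is the difference between the inhomogeneous one-step mean from~\eqref{condProb} and $m^\uparrow(z,\T_0)$. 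The key point is that, by the Perron asymptotics~\eqref{Qn}, $b^{(n)}_{\ell'}(z)$ decays \emph{geometrically} in the time-to-horizon $n-\ell'$, with a bound of the form $|b^{(n)}_{\ell'}(z)|\le C\,\theta^{\,n-\ell'}\phi(z)$ for some $\theta\in(0,1)$ and a polynomial $\phi$ (uniformity in $z$ being supplied by the spectral estimates underlying~\eqref{Qn}; cf.\ \cite{Gosselin-2001,braunsteins2022parameter}). Since $\Delta_\ell$ is $\mathcal F_{\ell'}$-measurable for $\ell<\ell'$, pulling out the conditional mean and applying Cauchy--Schwarz with the uniform moment bounds~\ref{cond:bounded-moments-xi} yields
$$\Big|\sum_{\ell<\ell'}\mbE_i[\Delta_\ell\Delta_{\ell'}\,|\,Z_n>0]\Big|\le C'\sum_{\ell'=1}^{n-1}\theta^{\,n-\ell'}\Big(\mbE_i\big[(\textstyle\sum_{\ell<\ell'}\Delta_\ell)^2\,|\,Z_n>0\big]\Big)^{1/2}\le C''\sqrt{\Sigma_n(k)},$$
where the geometric sum $\sum_{\ell'}\theta^{\,n-\ell'}$ is bounded uniformly in $n$ and $C''$ is independent of $k$ (a maximal bound over partial sums is used in the middle inequality).

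\emph{Step 4 (Conclusion).} Combining Steps~2 and~3 gives the self-bounding inequality $\Sigma_n(k)\le D_n(k)\,n+2C''\sqrt{\Sigma_n(k)}$, which forces $\Sigma_n(k)=O(n)$ and hence $\limsup_{n\to\infty}\mbE_i[V_{n,k}^2\,|\,Z_n>0]=\limsup_n\Sigma_n(k)/n\le D(k)$. Letting $k\to\infty$ and using $D(k)\to0$ completes the proof. I expect Step~3 --- quantifying the geometric decay of the conditioning bias $b^{(n)}_{\ell'}(z)$ uniformly in $z$, and organising the partial-sum Cauchy--Schwarz estimate into a self-improving bound --- to be the main technical obstacle; the remainder is bookkeeping built on the already-established results~\eqref{eq:Q-normality-m}--\eqref{eq:Q-indep-m} and Lemma~\ref{weights}.
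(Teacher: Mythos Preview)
Your approach is genuinely different from the paper's and, at the strategic level, viable. The paper does \emph{not} work directly with the time-inhomogeneous conditioned chain $\{Z_\ell^{(n)}\}$. Instead it (i) proves the analogue of \eqref{eq:a3-prefinal} for the $Q$-process $\{Z_\ell^\uparrow\}$, where $\mbE[Z_{\ell+1}^\uparrow-zm^\uparrow(z,\T_0)\mid Z_\ell^\uparrow=z]=0$ holds exactly so the off-diagonal terms \emph{vanish identically}, and then bounds the diagonal via the Perron decomposition $Q^l=\rho^l\vc v\vc u^\top+S^l$ and the weighted-norm estimates of \cite{braunsteins2022parameter}; and (ii) transfers the result to $\{Z_\ell^{(n)}\}$ by a coupling (their Theorem~3.1) under which $Z_\ell^{(n)}=Z_\ell^\uparrow$ for all $\ell\le n-K$ with probability $\ge 1-\eta$, so the discrepancy $\sum_z(V_{nz}^{(n)}-V_{nz}^\uparrow)$ is controlled deterministically by the last $K$ steps and is $O(1/\sqrt n)$ on a high-probability event. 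The payoff of their route is that no bias term ever appears: all the work is in the diagonal tail estimate and the coupling bookkeeping.

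Your route trades the coupling for an explicit bias correction. Two points deserve care. First, the bound $|b^{(n)}_{\ell'}(z)|\le C\theta^{n-\ell'}\phi(z)$ with $\phi$ polynomial is not free: writing $Q^m=\rho^m\vc v\vc u^\top+S^m$ one gets a numerator of order $(1-\delta)^{n-\ell'}\rho^{n-\ell'}$, but the denominator is $\mbP_z[Z_{n-\ell'}>0]=\rho^{n-\ell'}v_z+\vc e_z^\top S^{n-\ell'}\vc 1$, so you need either a \emph{lower} bound on $v_z$ (the paper only records upper bounds) or a separate argument that $\mbP_z[Z_m>0]\ge c\,\rho^m v_z$ uniformly in $z$; this is obtainable from the same spectral package in \cite{Gosselin-2001,braunsteins2022parameter} but must be stated and used. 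Second, the step ``$\big(\mbE_i[(\sum_{\ell<\ell'}\Delta_\ell)^2\mid Z_n>0]\big)^{1/2}\le\sqrt{\Sigma_n(k)}$'' is not justified: $\Sigma_n(k)$ is the full-sum second moment, not a maximal bound, and there is no martingale structure here to invoke Doob. The fix is simpler than your self-improving scheme: use the crude $\mbE[(\sum_{\ell<\ell'}\Delta_\ell)^2]\le \ell'\sum_{\ell<\ell'}\mbE[\Delta_\ell^2]\le n^2 D_n(k)$, which yields off-diagonal $\le C''n\sqrt{D_n(k)}$ and hence $\Sigma_n(k)/n\le D_n(k)+2C''\sqrt{D_n(k)}\to 0$; no self-bounding is needed. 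Finally, your ``Ces\`aro'' claim for $D_n(k)$ also hides work: both $\mbP_i[Z_\ell=z\mid Z_n>0]$ and $\mbE[(Z_{\ell+1}-zm^\uparrow(z))^2\mid Z_\ell=z,Z_n>0]$ depend on $(\ell,n)$, and identifying the limit again requires the same spectral/coupling input you are trying to avoid. In short, your plan can be completed, but it ends up leaning on essentially the same estimates the paper uses, while the paper's $Q$-process detour eliminates the off-diagonal term entirely and isolates the time-inhomogeneity in a single coupling step.
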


\noindent\textbf{Proof of Lemma \ref{aditional_lemma}.}
%\begin{proof}
We establish the result when $\hat w_n(z)=\hat w^{(2)}_n(z)$; the reasoning when $\hat w_n(z)=\hat w^{(1)}_n(z)$ follows from similar (albeit simpler) arguments. To lighten the notation we drop the dependence on $\T_0$ in the rest of the proof.

Fix $j=1,\ldots,d$ and denote
$$
V_{nz}=f(z)  \hat w_n(z) \sqrt{n}(m^\uparrow(z)-\hat{m}_n(z)),\quad n,z\in\N,
$$
where $f(z):=\frac{\partial m^\uparrow(z)}{\partial \theta_j}$.
We prove that for each $\epsilon>0$,
\begin{align}\label{eq:a3-prefinal}
\lim_{k\to\infty}\bigg(\limsup_{n\to\infty}\mbP_i\bigg[\bigg|\sum_{z=k+1}^\infty V_{nz}\bigg|>\epsilon|Z_n>0\bigg]\bigg)=0.
\end{align}

We divide the proof of \eqref{eq:a3-prefinal} in two steps. Recall the definitions of $\{ Z_\ell^{(n)} \}$ and $\{Z^{\uparrow}_{\ell}\}$ from Section \ref{sec:Q-process}.
For $n,z\in\N$, let
\begin{align*}
V_{nz}^\uparrow &=f(z)q_n^\uparrow(z) \sqrt{n}(m^\uparrow(z)-\hat{m}_n^\uparrow(z)),\\
q_n^\uparrow(z) &=\frac{z\sum_{i=0}^{n-1}\ind{Z_i^\uparrow=z}}{\sum_{i=0}^{n-1}Z_i^\uparrow},\quad
\hat{m}_n^\uparrow(z)=\frac{\sum_{i=0}^{n-1}Z_{i+1}^\uparrow\ind{Z_i^\uparrow=z}}{z\sum_{i=0}^{n-1}\ind{Z_i^\uparrow=z}},
\end{align*}
and
\begin{align*}
V_{nz}^{(n)} &=f(z)q_n^{(n)}(z) \sqrt{n}(m^\uparrow(z)-\hat{m}_n^{(n)}(z)),\\
q_n^{(n)}(z) &=\frac{z\sum_{i=0}^{n-1}\ind{Z_i^{(n)}=z}}{\sum_{i=0}^{n-1}Z_i^{(n)}},\quad \hat{m}_n^{(n)}(z)=\frac{\sum_{i=0}^{n-1}Z_{i+1}^{(n)}\ind{Z_i^{(n)}=z}}{z\sum_{i=0}^{n-1}\ind{Z_i^{(n)}=z}}.
\end{align*}
\normalsize

\noindent\textbf{Step 1.} We show that the counterpart of \eqref{eq:a3-prefinal} for the $Q$-process holds, that is,
\begin{align}\label{eq:a3-final-Q-process}
\forall\epsilon>0,\quad\lim_{k\to\infty}\bigg(\limsup_{n\to\infty}\mbP_i\bigg[\bigg|\sum_{z=k+1}^\infty V_{nz}^\uparrow\bigg|>\epsilon\bigg]\bigg)=0.
\end{align}

\noindent\textbf{Step 2.} We prove that there exists a sequence of probability spaces (indexed by $n$) on which both $\{Z^{(n)}_\ell\}$ and $\{Z^{\uparrow}_\ell\}$ are defined such that
%\footnote{perhaps this needs more explanation}
\begin{align}\label{eq:a3-final-Q-original-process}
\forall\epsilon>0,\quad\lim_{k\to\infty}\bigg(\limsup_{n\to\infty}\mbP_i\bigg[\bigg|\sum_{z=k+1}^\infty (V_{nz}^{(n)}-V_{nz}^\uparrow)\bigg|>\epsilon\bigg]\bigg)=0.
\end{align}

We start with the proof of \eqref{eq:a3-final-Q-process}. First, observe that
\begin{align*}
\sum_{z=k+1}^\infty V_{nz}^\uparrow &= \sum_{z=k+1}^\infty  f(z) q_n^\uparrow(z) \sqrt{n}(m^\uparrow(z)-\hat{m}_n^\uparrow(z))\\
&=\sum_{z=k+1}^\infty f(z) \frac{z\sum_{i=0}^{n-1}\ind{Z_i^\uparrow=z}}{\sum_{i=0}^{n-1}Z_i^\uparrow}\cdot\sqrt{n} \frac{\sum_{i=0}^{n-1}(z m^\uparrow(z)- Z_{i+1}^\uparrow)\ind{Z_i^\uparrow=z}}{z\sum_{i=0}^{n-1}\ind{Z_i^\uparrow=z}},\\
&=\frac{1}{\sum_{i=0}^{n-1}Z_i^\uparrow}\sum_{z=k+1}^\infty f(z) \sqrt{n} \sum_{i=0}^{n-1}(z m^\uparrow(z)- Z_{i+1}^\uparrow)\ind{Z_i^\uparrow=z},
\end{align*}
then, using Markov's inequality and the fact that $\sum_{l=0}^{n-1}Z_l^\uparrow\geq n$ a.s., we have
\begin{align}
\nonumber \mbP_i&\bigg[\bigg|\sum_{z=k+1}^\infty V_{nz}^\uparrow\bigg|>\epsilon\bigg]=\\\nonumber
&=\mbP_i\bigg[\frac{1}{\sum_{l=0}^{n-1}Z_l^\uparrow}\bigg|\sum_{z=k+1}^\infty f(z)\sqrt{n} \sum_{l=0}^{n-1}(z m^\uparrow(z)- Z_{l+1}^\uparrow)\ind{Z_l^\uparrow=z}\bigg|>\epsilon\bigg]\\\nonumber
&\leq \frac{1}{\epsilon^2}\cdot \mbE_i\left[\left(\frac{\sum_{z=k+1}^\infty f(z) \sqrt{n} \sum_{l=0}^{n-1}(z m^\uparrow(z)- Z_{l+1}^\uparrow)\ind{Z_l^\uparrow=z}}{\sum_{l=0}^{n-1}Z_l^\uparrow}\right)^2\right]\\\nonumber
&= \frac{1}{\epsilon^2}\cdot \mbE_i\left[\left(\frac{\sum_{z=k+1}^\infty \frac{f(z)}{\sqrt{n}} \sum_{l=0}^{n-1}(z m^\uparrow(z)- Z_{l+1}^\uparrow)\ind{Z_l^\uparrow=z}}{\frac{1}{n}\sum_{l=0}^{n-1}Z_l^\uparrow}\right)^2\right]\\\nonumber
&\leq \frac{1}{\epsilon^2}\cdot \mbE_i\left[\left(\sum_{z=k+1}^\infty \frac{f(z)}{\sqrt{n}} \sum_{l=0}^{n-1}(z m^\uparrow(z)- Z_{l+1}^\uparrow)\ind{Z_l^\uparrow=z}\right)^2\right]\\\label{last}
&= \frac{1}{n\epsilon^2}\cdot \mbE_i\left[\left( \sum_{l=0}^{n-1}\sum_{z=k+1}^\infty f(z)(z m^\uparrow(z)- Z_{l+1}^\uparrow)\ind{Z_l^\uparrow=z}\right)^2\right].
\end{align}
We now show that the expectation in \eqref{last} is finite, that is, letting $a_l:=\sum_{z=k+1}^\infty f(z)(z m^\uparrow(z)- Z_{l+1}^\uparrow)\ind{Z_l^\uparrow=z}$ we show that $\mbE_i\left[\left( \sum_{l=0}^{n-1} a_l\right)^2\right]<\infty$.
We have
\begin{equation}\label{ea}\mbE_i\left[\left( \sum_{l=0}^{n-1} a_l\right)^2\right]= \sum_{l=0}^{n-1} \mbE_i[a_l^2]+2\sum_{j=0}^{n-1}\sum_{l=0}^{j-1} \mbE_i[a_l\, a_j].\end{equation}
We start by showing that the second sum in \eqref{ea} vanishes: for any fixed $0\leq j\leq n-1$ and $0\leq l\leq j-1$,
\begin{eqnarray*} \lefteqn{\mbE_i[a_l\, a_j]}\\
&=&\sum_{z_1=k+1}^\infty \sum_{z_2=k+1}^\infty \mbE[a_l\, a_j\,|\,Z_l^\uparrow=z_1, Z_j^\uparrow=z_2] \mbP_i[Z_l^\uparrow=z_1,Z_j^\uparrow=z_2]
\\
&=& \sum_{z_1=k+1}^\infty \sum_{z_2=k+1}^\infty f(z_1)f(z_2) \mbE[(z_1 m^\uparrow(z_1)- Z_{l+1}^\uparrow)\, (z_2 m^\uparrow(z_2)- Z_{j+1}^\uparrow)\,|\,Z_l^\uparrow=z_1, Z_j^\uparrow=z_2]\\&&\phantom{\sum_{z_1=k+1}^\infty \sum_{z_2=k+1}^\infty}\cdot \mbP_i[Z_l^\uparrow=z_1,Z_j^\uparrow=z_2]\\
&=&0,
\end{eqnarray*}
where the last equality follows from the fact that $\mbE[(z m^\uparrow(z)- Z_{l+1}^\uparrow)\,|\,Z_l^\uparrow=z]=0$ for any $z>0$, and the conditional independence of $Z_{l+1}^\uparrow$ and $Z_{j+1}^\uparrow$ given $Z_l^\uparrow=z_1$ and $Z_j^\uparrow=z_2$.

Now we treat the first sum in \eqref{ea}:
\begin{eqnarray*}\mbE_i[a_l^2]&=&\sum_{x=k+1}^\infty \mbE[a_l^2\,|\,Z_l^\uparrow=x] \mbP_i[Z_l^\uparrow=x]\\&=&\sum_{x=k+1}^\infty f(x)^2 \mbE[(x m^\uparrow(x)- Z_{l+1}^\uparrow)^2\,|\,Z_l^\uparrow=x]\mbP_i[Z_l^\uparrow=x] \\&\leq & M_1^{*2}\sum_{x=k+1}^\infty \V[Z_{l+1}^\uparrow\,|\,Z_l^\uparrow=x] \mbP_i[Z_l^\uparrow=x], \end{eqnarray*}
where in the last inequality we used Assumption \ref{cond:bound-m-arrow}\ref{cond:m-arrow-bound-deriv}  (and the fact that $\mbE[(x m^\uparrow(x)- Z_{l+1}^\uparrow)\,|\,Z_l^\uparrow=x]=0$).
Note that according to \cite[Lemma 13]{braunsteins2022parameter}, there exists an operator $\textbf{S}$ such that $
\textbf{Q}^n = \rho^n \vc{v} \vc{u}^\top + \textbf{S}^n.$
We then have from \eqref{last} and \eqref{ea}
\begin{eqnarray}\nonumber \mbP_i\bigg[\bigg|\sum_{z=k+1}^\infty V_{nz}^\uparrow\bigg|>\epsilon\bigg] &\leq& \frac{1}{n \epsilon^2}
\sum_{l=0}^{n-1} \mbE_i[a_l^2]\\\nonumber &\leq& \frac{M_1^{*2}}{n \epsilon^2}\sum_{z=k+1}^\infty \sum_{l=0}^{n-1} \V[Z_{l+1}^\uparrow\,|\,Z_l^\uparrow=z] \mbP_i[Z_l^\uparrow=z]\\\nonumber &=&
\frac{M_1^{*2}}{n \epsilon^2}\sum_{z=k+1}^\infty \sum_{l=0}^{n-1}  z^2 \sigma^{\uparrow 2}(z)(\textbf{Q}^{\uparrow l})_{iz}\\\nonumber
&=&\frac{M_1^{*2}}{n\epsilon^2 v_i}\sum_{z=k+1}^\infty z^2 \sigma^{\uparrow 2}(z) v_z \sum_{l=0}^{n-1} \frac{Q_{iz}}{\rho^l}\\\nonumber
&=&\frac{M_1^{*2}}{n\epsilon^2 v_i}\sum_{z=k+1}^\infty z^2 \sigma^{\uparrow 2}(z) v_z \sum_{l=0}^{n-1} \frac{\boldsymbol{e}_i^\top  \textbf{Q}^l \boldsymbol{e}_z}{\rho^l}\\\nonumber
&=&\frac{M_1^{*2}}{n\epsilon^2 v_i}\sum_{z=k+1}^\infty z^2 \sigma^{\uparrow 2}(z) v_z \sum_{l=0}^{n-1} \frac{\boldsymbol{e}_i^\top  (\rho^l \vc v \vc u + \textbf{S}^l) \boldsymbol{e}_z}{\rho^l}\\\nonumber
&=&\frac{M_1^{*2}}{n\epsilon^2 v_i}\sum_{z=k+1}^\infty z^2 \sigma^{\uparrow 2}(z) v_z \sum_{l=0}^{n-1} \left(v_i  u_z+\frac{\boldsymbol{e}_i^\top \textbf{S}^l \boldsymbol{e}_z}{\rho^l}\right)\\\nonumber
&=&\frac{M_1^{*2}}{n\epsilon^2 v_i}\sum_{z=k+1}^\infty z^2 \sigma^{\uparrow 2}(z) v_z n v_i u_z\\\nonumber
&\phantom{=}&+\frac{M_1^{*2}}{n\epsilon^2 v_i}\sum_{z=k+1}^\infty z^2 \sigma^{\uparrow 2}(z) v_z \sum_{l=0}^{n-1} \frac{\boldsymbol{e}_i^\top \textbf{S}^l \boldsymbol{e}_z}{\rho^l}\\
\label{ft}
&=&\frac{M_1^{*2}}{\epsilon^2}\sum_{z=k+1}^\infty z^2 \sigma^{\uparrow 2}(z) v_z u_z\\\label{st}
&\phantom{=}&+\frac{M_1^{*2}}{n\epsilon^2 v_i}\sum_{z=k+1}^\infty z^2 \sigma^{\uparrow 2}(z) v_z \sum_{l=0}^{n-1} \frac{\boldsymbol{e}_i^\top \textbf{S}^l \boldsymbol{e}_z}{\rho^l}.
\end{eqnarray}

Now, we first show that $\sum_{z=1}^\infty z^2 \sigma^{\uparrow 2}(z) v_z u_z<\infty$ so that the term \eqref{ft} vanishes as $k\to\infty$. In what follows, we use the function $t(\cdot)$ and the norms $\|\cdot\|_{\infty,t}$ and $\|\cdot\|_{1,t}$ defined in \cite[Section 5.3.1]{braunsteins2022parameter}. We have
\begin{eqnarray*}
\sum_{z=1}^\infty z^2 u_z v_z \sigma^{\uparrow 2}(z)&\leq&
 \sum_{z=1}^\infty z^2 u_z v_z \frac{\sum_{k=1}^\infty k^2 Q_{zk}^\uparrow}{z^2}\\
&=&  \sum_{z=1}^\infty u_z v_z \frac{\sum_{k=1}^\infty k^2 v_k Q_{zk}}{\rho v_z}\\
&= &\frac{1}{\rho} \sum_{z=1}^\infty u_z \sum_{k=1}^\infty \frac{ k^{\iota+3} v_k Q_{zk}}{k^{\iota+1}}\\
&\leq& \frac{\|\vc v\|_{\infty,t}}{\rho} \sum_{z=1}^\infty u_z \sum_{k=1}^\infty k^{\iota+3} Q_{zk}\\
&=& \frac{\|\vc v\|_{\infty,t}}{\rho} \sum_{z=1}^\infty u_z \mbE[Z_1^{\iota+3}|Z_0=z]\\
&\leq& \frac{\|\vc v\|_{\infty,t}}{\rho} \sum_{z=1}^\infty z^{\iota+3}u_z \mbE[\xi_{01}(z)^{\iota+3}]\\
&\leq& \frac{ C \|\vc v\|_{\infty,t}\|\vc u\|_{1,t^*}}{\rho}<\infty ,
\end{eqnarray*}
for some contant $C>0$, where we applied \cite[Lemma 13]{braunsteins2022parameter} with the functions $t(x)=x^{\iota+1}$ and $t^*(x)=x^{\iota+3}$, Minkowski's inequality, and Assumption \ref{cond:bounded-moments-xi}.

Finally, to obtain \eqref{eq:a3-final-Q-process} it remains to treat \eqref{st}, i.e., to
prove that
\begin{align}\label{eq:a3-final-Q-process-2term}
\lim_{k\to\infty}\left(\limsup_{n\to\infty}\left(\frac{M_1^{*2}}{n\epsilon^2 v_i}\sum_{z=k+1}^\infty z^2 \sigma^{\uparrow 2}(z) v_z \sum_{l=0}^{n-1} \frac{\boldsymbol{e}_i^\top \textbf{S}^l \boldsymbol{e}_z}{\rho^l}\right)\right)=0.
\end{align}

 To show this, we make use of \cite[Inequality (51)]{braunsteins2022parameter}, that is, the fact that there exist $K<\infty$ and $\delta>0$ such that $|\boldsymbol{e}_i^\top \textbf{S}^l \boldsymbol{e}_z|\leq \frac{t(i)}{t(z)} K (1-\delta)^l \rho^l$. We have
 \begin{align}\nonumber
 \frac{1}{n\epsilon^2 v_i}&\sum_{z=k+1}^\infty z^2 \sigma^{\uparrow 2}(z) v_z \sum_{l=0}^{n-1} \frac{\boldsymbol{e}_i^\top \textbf{S}^l \boldsymbol{e}_z}{\rho^l}\\ \nonumber
 &\leq \frac{1}{n\epsilon^2 v_i}\sum_{z=k+1}^\infty z^2 \sigma^{\uparrow 2}(z) v_z \sum_{l=0}^{n-1} \frac{|\boldsymbol{e}_i^\top \textbf{S}^l \boldsymbol{e}_z|}{\rho^l}\\\nonumber
 &\leq \frac{t(i)K}{n\epsilon^2 v_i}\sum_{z=k+1}^\infty z^2 \sigma^{\uparrow 2}(z) v_z \sum_{l=0}^{n-1} \frac{(1-\delta)^l\rho^l}{\rho^l t(z)}\\\nonumber
 &\leq \frac{t(i)K}{n\epsilon^2 v_i}\sum_{z=k+1}^\infty z^2 \sigma^{\uparrow 2}(z) \frac{v_z}{t(z)} \sum_{l=0}^{n-1} (1-\delta)^l\\\nonumber
 & = \frac{t(i)K}{n\epsilon^2 v_i} \frac{1-(1-\delta)^n}{\delta}\sum_{z=k+1}^\infty z^2 \sigma^{\uparrow 2}(z) \frac{v_z}{t(z)}\\\label{last_step1}& \leq \frac{t(i)K}{n\epsilon^2 v_i} \frac{1-(1-\delta)^n}{\delta}\sum_{z=k+1}^\infty \sum_{l=1}^\infty l^2 Q^{\uparrow}_{zl} \frac{v_z}{t(z)},
 \end{align}
 where in the last step we used the definition of $\sigma^{\uparrow 2}(z)$ and the fact that $\V [X]\leq \mbE[X^2]$. Next we use \cite[Lemma 13]{braunsteins2022parameter} which implies that $v_l<C\, l^{\nu^*}$ for all $l\geq 1$ for some positive constants $C$ and $\nu^*$ and which justifies the choice of $t(z)=z^{\nu^*+m'}$ for some positive constant $m'$:
  \begin{align}\nonumber
 \eqref{last_step1} &= \frac{t(i)K\,[1-(1-\delta)^n]}{n\epsilon^2 v_i\delta\rho} \sum_{z=k+1}^\infty \sum_{l=1}^\infty l^2 Q_{zl} \frac{v_l}{v_z} \frac{v_z}{t(z)}\\\nonumber
 &= \frac{t(i)K\,[1-(1-\delta)^n]}{n\epsilon^2 v_i\delta\rho} \sum_{z=k+1}^\infty \sum_{l=1}^\infty l^2 Q_{zl} v_l\frac{1}{z^{\nu^*+m'}}\\\label{last2}&<
 \frac{t(i)K\,[1-(1-\delta)^n]\,C}{n\epsilon^2 v_i\delta\rho} \sum_{z=k+1}^\infty \sum_{l=1}^\infty l^{2+\nu^*} Q_{zl} \frac{1}{z^{\nu^*+m'}}.
 \end{align}
 Now we use Minkowski's inequality and Assumption \ref{cond:bounded-moments-xi} to write
 $$\sum_{l=1}^\infty l^{2+\nu^*} Q_{zl} =\mbE[Z_1^{2+\nu^*}\,|\,Z_0=z]\leq z^{2+\nu^*} \mbE[\xi^{2+\nu^*}]\leq z^{2+\nu^*} C'$$ for some constant $C'$. By choosing $m'=4$ we obtain
 $$\eqref{last2}\leq \frac{t(i)K }{n\epsilon^2 v_i} \frac{1-(1-\delta)^n}{\delta}\frac{C C'}{\rho}\sum_{z=k+1}^\infty \frac{z^{2+\nu^*}}{z^{\nu^*+4}}<\infty,$$where the last sum is finite since it is bounded by $\sum_{z=1}^\infty \frac{1}{z^{2}}<\infty$.

\medskip

We now move on to Step 2.
According to \cite[Theorem 3.1(i)]{braunsteins2022parameter}, there exists a sequence of probability spaces (indexed by $n$) such that for any $K$,
\begin{equation}\label{sps}\liminf_{n \to \infty} \mathbb{P}_i (Z_j^{(n)}=Z^{\uparrow}_j, \, \forall \, j=1, \dots, n-K) = 1 - \frac{\rho^{-K}}{2} \sum_{j=1}^\infty u_j | \boldsymbol{e}_j^{\top}\textbf{S}^{K} \mathbf{1} |.\end{equation} Next, using \cite[Inequality (52)]{braunsteins2022parameter} which states that
there exists $L<\infty$ and $\varepsilon > 0$ such that, for any function $t$ identified in \cite[Lemma 13]{braunsteins2022parameter}, $| \boldsymbol{e}_j^{\top}\textbf{S}^{K} \mathbf{1} |\leq t(j) L (1-\varepsilon)^K \rho^K$, we get
$$\eqref{sps} \geq 1- \frac{1}{2} L (1-\varepsilon)^{K} \sum_{j=1}^\infty u_j t(j),$$where $||\vc u||_{1,t}=\sum_{j=1}^\infty u_j t(j)<\infty$  (see \cite[Section 5.3.1]{braunsteins2022parameter}).   Therefore, for any $\eta >0$ there exists $K(\eta) \in \mathbb{N}$ such that
$$\liminf_{n \to \infty} \mathbb{P}_i (Z_j^{(n)}=Z^{\uparrow}_j, \, \forall \, j=1, \dots, n-K) \geq 1- \eta/3.$$

In addition, for any $K(\eta) \in \mathbb{N}$, both $\sum_{j=n-K(\eta)}^n Z^{(n)}_j$ and $\sum_{j=n-K(\eta)}^n Z^\uparrow_j$ converge to non-defective random variables as $n \to \infty$. Indeed, for $\sum_{j=n-K(\eta)}^n Z^{(n)}_j$ this follows from [Gosselin, Theorem 3.1(b)], and for $\sum_{j=n-K(\eta)}^n Z^\uparrow_j$ this follows from the ergodicity of $\{ Z^\uparrow_n \}$.
Consequently for any $\eta>0$ and $K(\eta) \in \mathbb{N}$ there exists $M(K,\eta)$ such that
\[
\limsup_{n \to \infty} \mathbb{P}\left(\sum_{j=n-K(\eta)}^n Z^{(n)}_j > M(K,\eta) \right) \leq \eta/3 \; \text{ and } \;   \limsup_{n \to \infty} \mathbb{P}\left(\sum_{j=n-K(\eta)}^n Z^{\uparrow}_j > M(K,\eta) \right) \leq \eta/3.
\]
We define the event
\begin{eqnarray*}
E^*_n&:=&\bigg\{Z_j^{(n)} = Z^\uparrow_j, \, \forall \, j=1,\dots, n-K(\eta) \bigg\}\\
&& \cap\; \bigg\{ \sum_{j=n-K(\eta)}^n Z^{(n)}_j \leq M(K,\eta) \bigg\} \cap  \bigg\{ \sum_{j=n-K(\eta)}^n Z^\uparrow_j \leq M(K,\eta) \bigg\}.
\end{eqnarray*}
For any $\eta >0$ there then exists $K(\eta) \in \mathbb{N}$ and  $M(K,\eta)$ such that, on the sequence of probability spaces described in \cite[Theorem 3.1]{braunsteins2022parameter}, we have
\begin{align*}
\liminf_{n \to \infty}\, &\mathbb{P}( E^*_n ) \\
&\geq \liminf_{n \to \infty} \bigg[ 1 - \mathbb{P} \bigg(\exists j \in \{1 ,\dots, n-K(\eta)\} : Z_j^{(n)} \neq Z^\uparrow_j \bigg) - \mathbb{P}\bigg(\sum_{j=n-K(\eta)}^n Z^{(n)}_j > M(K,\eta) \bigg)\\
&\qquad -  \mathbb{P}\bigg(\sum_{j=n-K(\eta)}^n Z^{\uparrow}_j > M(K,\eta) \bigg) \bigg] \\
&\geq 1- \eta.
\end{align*}
It therefore suffices to show that, on the event $E^*_n$,
we have
\begin{equation}\label{eq:WUP}
\left| \sum^\infty_{z=k} (V^{(n)}_{nz} - V^\uparrow_{nz} )  \right| \to 0, \quad \text{as } n \to \infty
\end{equation}
a.s. for any $k, K(\eta), M(K,\eta) \in \mathbb{N}$, because in this case we have
\[
\limsup_{n \to \infty}  \mathbb{P}_i \left[ \left| \sum_{z=k+1}^\infty (V^{(n)}_{nz} - V^\uparrow_{nz}) \right|>\varepsilon \right]  \leq 1-\liminf_{n \to \infty} \mathbb{P}(E^{*c}_n) \leq \eta,
\]
where, since $\eta$ was chosen arbitrarily, we can let $\eta \downarrow 0$.

We now establish \eqref{eq:WUP}.
We have
\begin{align*}
\sum_{z=k}^\infty (V_{nz}^{(n)}-V_{nz}^\uparrow) &= \sqrt{n} \sum_{z=k}^\infty f(z) \left[q_n^{(n)}(z)(m^\uparrow(z)-\hat{m}_n^{(n)}(z))-q_n^\uparrow(z)(m^\uparrow(z)-\hat{m}_n^\uparrow(z)) \right] \\
&=  \sqrt{n} \sum_{z=k}^\infty f(z) \left[ \left\{ (q_n^{(n)}(z)- q_n^\uparrow(z))m^\uparrow(z) \right\} + \left\{ \hat{m}_n^\uparrow(z)q_n^\uparrow(z) - \hat{m}_n^{(n)}(z) q_n^{(n)} \right\} \right].
\end{align*}
We bound the sums resulting from these two terms separately.
We start with the second term. Using the assumption that we are on the event $E^*_n$ in steps 4 to 6, we obtain
\begin{align*}
&\left| \sum_{z=k}^\infty  f(z)\left[ \hat{m}_n^{(n)}(z) q_n^{(n)} - \hat{m}_n^\uparrow(z)q_n^\uparrow(z) \right] \right| \\
&= \left| \sum_{z=k}^\infty f(z) \left[ \frac{\sum_{i=0}^{n-1}Z_{i+1}^{(n)}\ind{Z_i^{(n)}=z}}{\sum_{i=0}^{n-1}Z_i^{(n)}}-\frac{\sum_{i=0}^{n-1}Z_{i+1}^\uparrow\ind{Z_i^\uparrow=z}}{\sum_{i=0}^{n-1}Z_i^\uparrow} \right] \right| \\
&=\left| \sum^\infty_{z=k} f(z)\frac{\bigg( \sum_{i=1}^{n-1} Z^{(n)}_{i+1}\ind{Z^{(n)}_i=z}\bigg)\bigg( \sum^{n-1}_{i=0} Z^\uparrow_i \bigg)- \bigg( \sum_{i=0}^{n-1} Z^{\uparrow}_{i+1} \ind{Z^{^{\uparrow}}_i = z} \bigg)\bigg( \sum_{i=0}^{n-1}Z_i^{(n)}\bigg)}{\bigg(\sum_{i=0}^{n-1} Z^{(n)}_i \bigg) \bigg(\sum_{i=0}^{n-1} Z^\uparrow_i \bigg)} \right|\\
&=\frac{1}{\left(\sum_{i=0}^{n-1}Z_i^{(n)}\right)\left(\sum_{i=0}^{n-1}Z_i^\uparrow \right)}\Bigg| \sum_{z=k}^{\infty} f(z) \Bigg[ \bigg( \sum_{i=0}^{n-K(\eta)-1}Z^{(n)}_{i+1} \ind{Z_i^{(n)} =z} \bigg)\bigg( \sum_{i=0}^{n-1}(Z^\uparrow_i - Z^{(n)}_i) \bigg) \\
&\qquad+\bigg( \sum^{n-1}_{i=n-K(\eta)} Z^{(n)}_{i+1} \ind{Z_i^{(n)}=z} \bigg)\bigg( \sum_{i=0}^{n-1} Z^\uparrow_i \bigg) - \bigg( \sum_{i=n-K(\eta)}^{n-1} Z^{\uparrow}_{i+1} \ind{Z^\uparrow_i=z} \bigg) \bigg( \sum^{n-1}_{i=0} Z_i^{(n)} \bigg) \Bigg] \Bigg| \\
&\leq M^* \left\{ \left| \frac{\bigg(\sum_{i=0}^{n-1}(Z^\uparrow_i - Z^{(n)}_i) \bigg)\bigg( \sum_{z=k}^\infty \sum_{i=0}^{n-K(\eta)-1}Z_{i+1}^{(n)} \ind{Z_i^{(n)}=z} \bigg) }{\bigg(\sum_{i=0}^{n-1} Z^{(n)}_i \bigg) \bigg(\sum_{i=0}^{n-1} Z^\uparrow_i \bigg)}\right| \right. \\
&\qquad + \left. \left|\frac{\sum_{z=k}^\infty \sum_{i=n-K(\eta)}^{n-1} Z_{i+1}^{(n)}\ind{Z_i^{(n)}=z}}{ \sum_{i=0}^{n-1} Z_i^{(n)}}\right| +\left| \frac{\sum_{z=k}^\infty \sum_{i=n-K(\eta)}^{n-1} Z_{i+1}^{\uparrow}\ind{Z_i^{\uparrow}=z}}{ \sum_{i=0}^{n-1} Z_i^{\uparrow} } \right| \right\} \\
&\leq 3 M^* M(K,\eta) / n,
\end{align*}
where in the penultimate step we have applied the triangular inequality,  and Assumption~\ref{cond:bound-m-arrow}\ref{cond:m-arrow-bound-deriv}, and in the final step, we have applied the inequalities $\sum_{i=0}^{n-1} Z^{(n)}_i \geq n$ and $\sum_{i=0}^{n-1} Z^{\uparrow}_i \geq n$ and the fact that we are on $E_n^*$.

Applying similar arguments (which are therefore omitted) we obtain the same bound for the first term.
We have thus established the result.\qed

\blue{

\section{Additional simulation results}
\label{appB}

\blue{
\subsection{Offspring means and mean function in the $Q$-process}\label{sec:color_map}

Before complementing the numerical results of Section~\ref{empirical}, we first illustrate (see Figure~\ref{fig:means-examples}) the offspring mean function $m(z)$ together with the equivalent function $m^\uparrow(z)$ in the $Q$-process, for the three Beverton-Holt binary splitting models considered, that is with $(K_0, v_0) =(100,0.6)$, $(K_0, v_0) =(25,0.7)$, and $(K_0, v_0) =(25, 0.53)$. We observe that in the first two cases, the functions $m(z)$ and $m^\uparrow(z)$ only differ at small population sizes. In addition, in these cases, the process does not visit those small population sizes very often (see Figures~\ref{oooooh} and \ref{fig:ex3:paths}). When $(K_0, v_0) =(100,0.6)$ or $(25,0.7)$, we therefore expect the respective conditional limits $\T_0$ and $\tilde{\T}$ of the estimators $\hat{\T}_n$ and $\tilde{\T}_n$ to be very similar. On the other hand, when $(K_0, v_0) =(25, 0.53)$, we see that the difference between $m(z)$ and $m^\uparrow(z)$ is greater, and noticeable even for population sizes which are greater than the carrying capacity. In addition, the process corresponding to this model tends to spend more time in small population sizes (see Figure~\ref{oooooh}).  Consequently, we expect that the limits  $\T_0$ and $\tilde{\T}$ are significantly different.

\begin{figure}
\centering\includegraphics[width=0.32\textwidth]{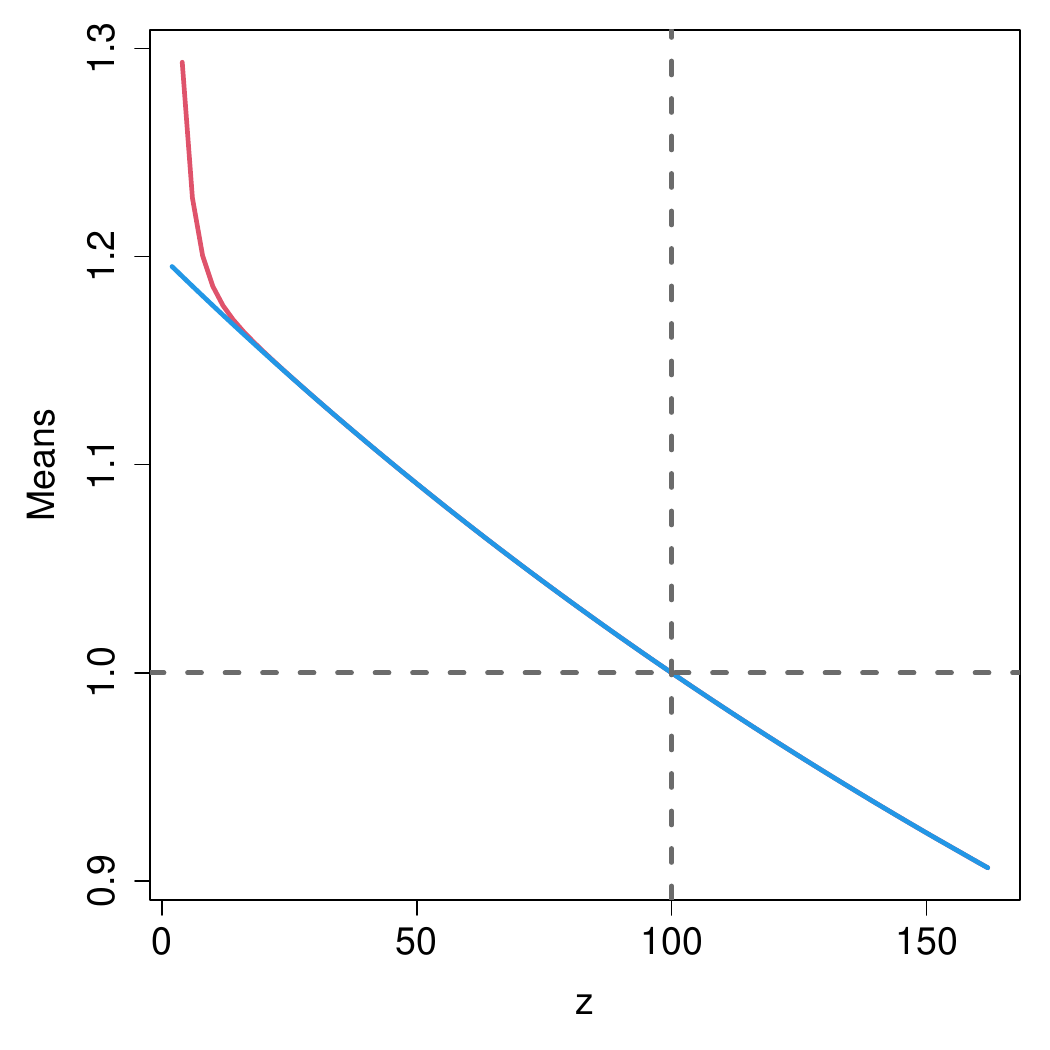}
\includegraphics[width=0.32\textwidth]{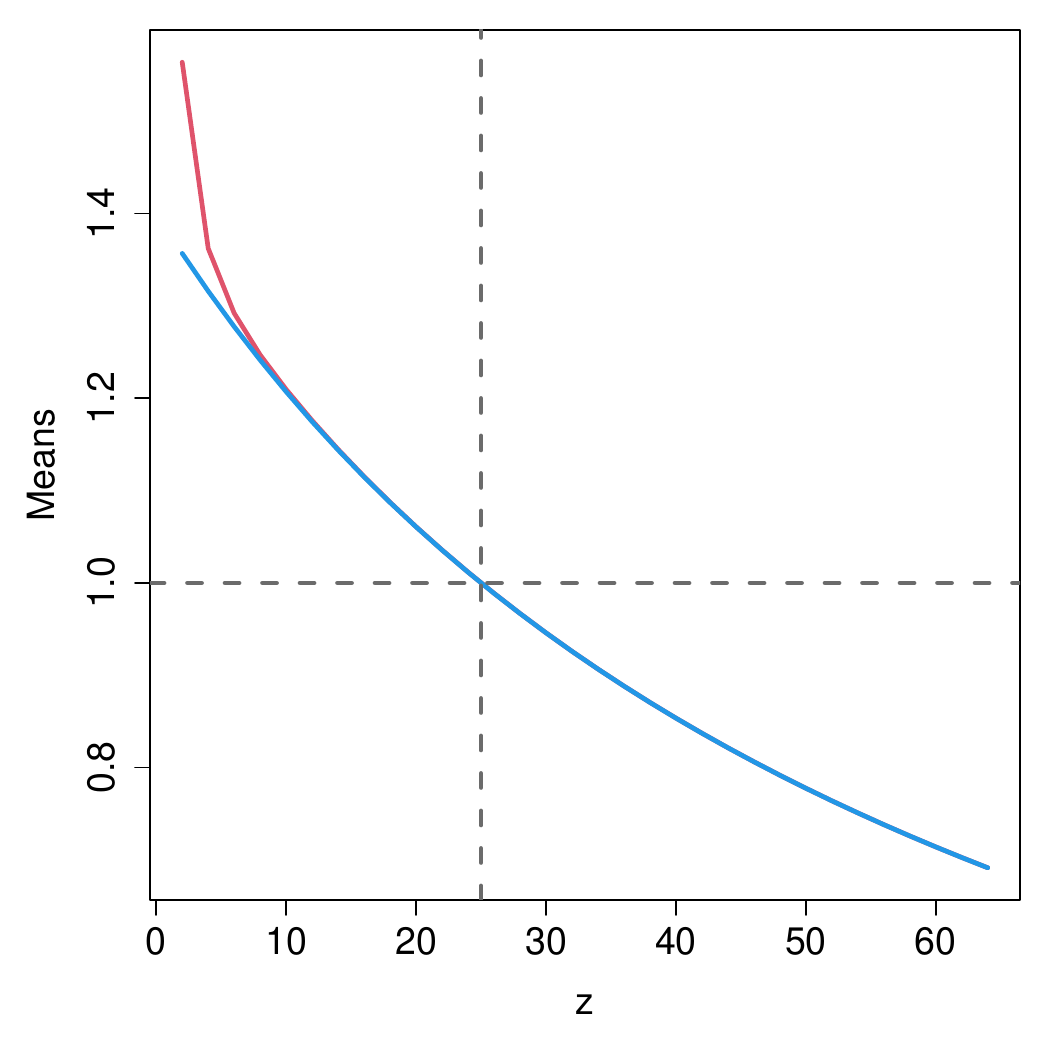}
\includegraphics[width=0.32\textwidth]{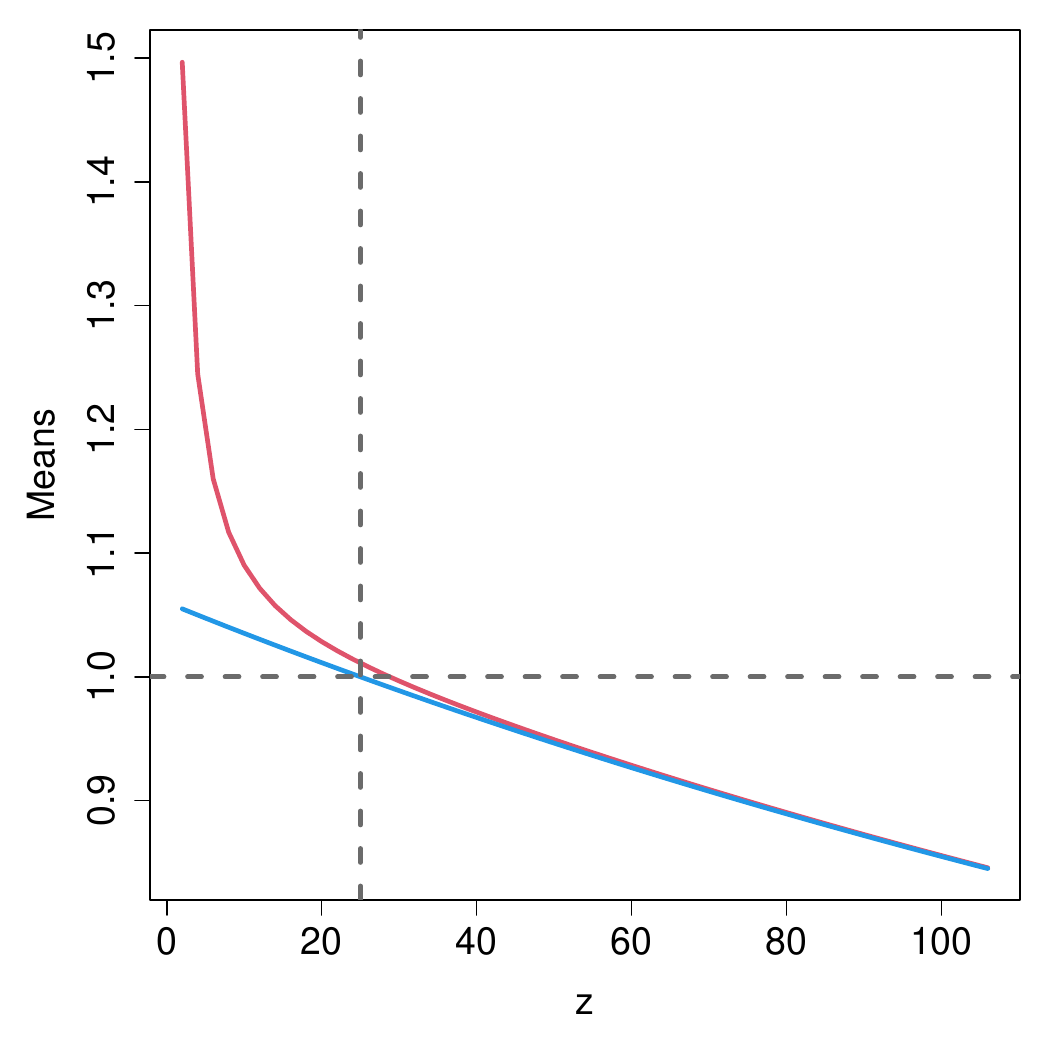}
\caption{\label{fig:means-examples}
\blue{\textsf{Beverton-Holt binary splitting model}. Left: $(K_0, v_0) =(100,0.6)$. Centre: $(K_0, v_0) =(25,0.7)$. Middle: $(K_0, v_0) =(25, 0.53)$. Offspring mean function $m(z)$ (blue) together with its counterpart $m^\uparrow(z)$ in the $Q$-process (red).}}
\end{figure}
\begin{figure}
\centering\includegraphics[width=0.51\textwidth]{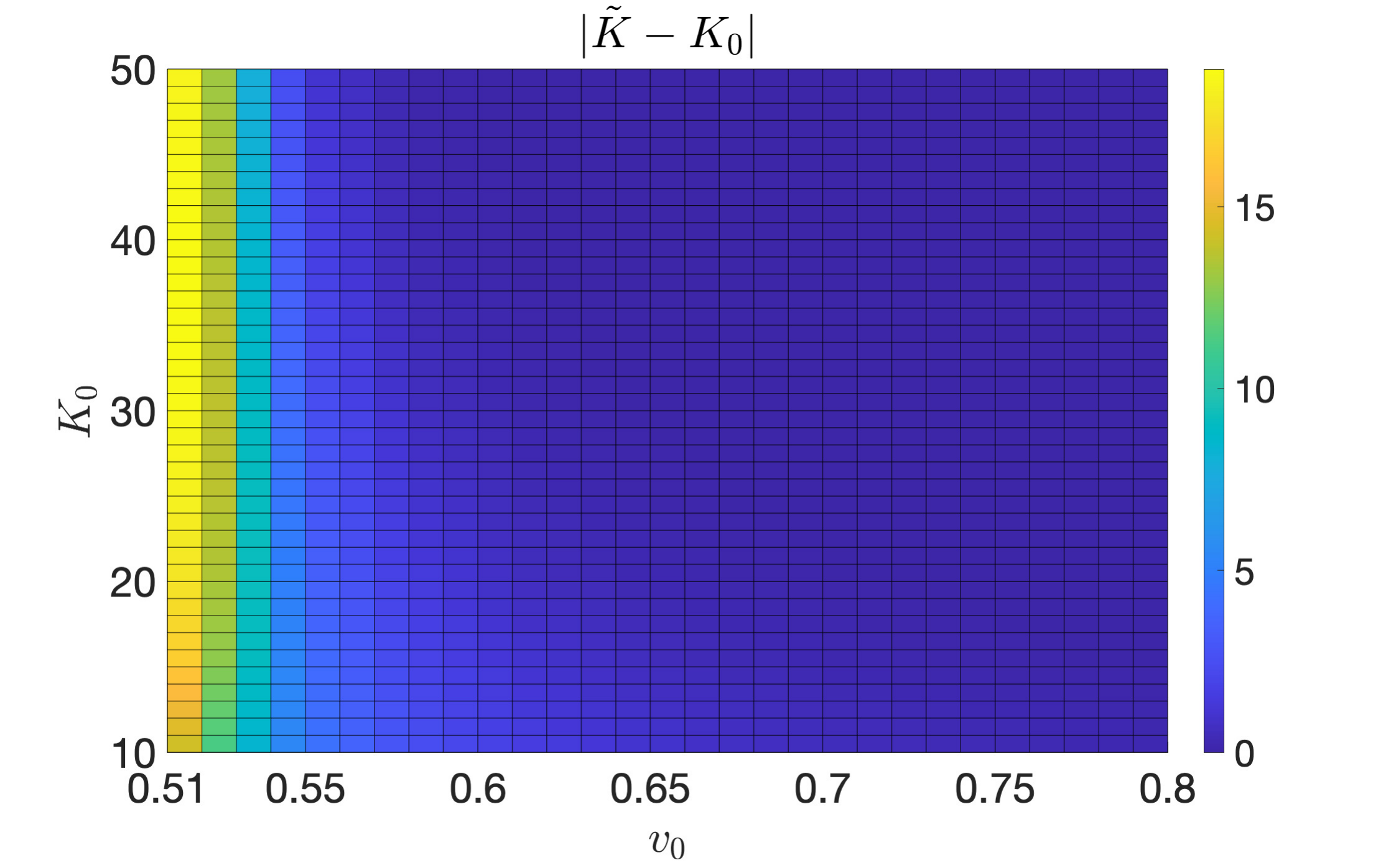}
\hspace{-0.7cm}\includegraphics[width=0.51\textwidth]{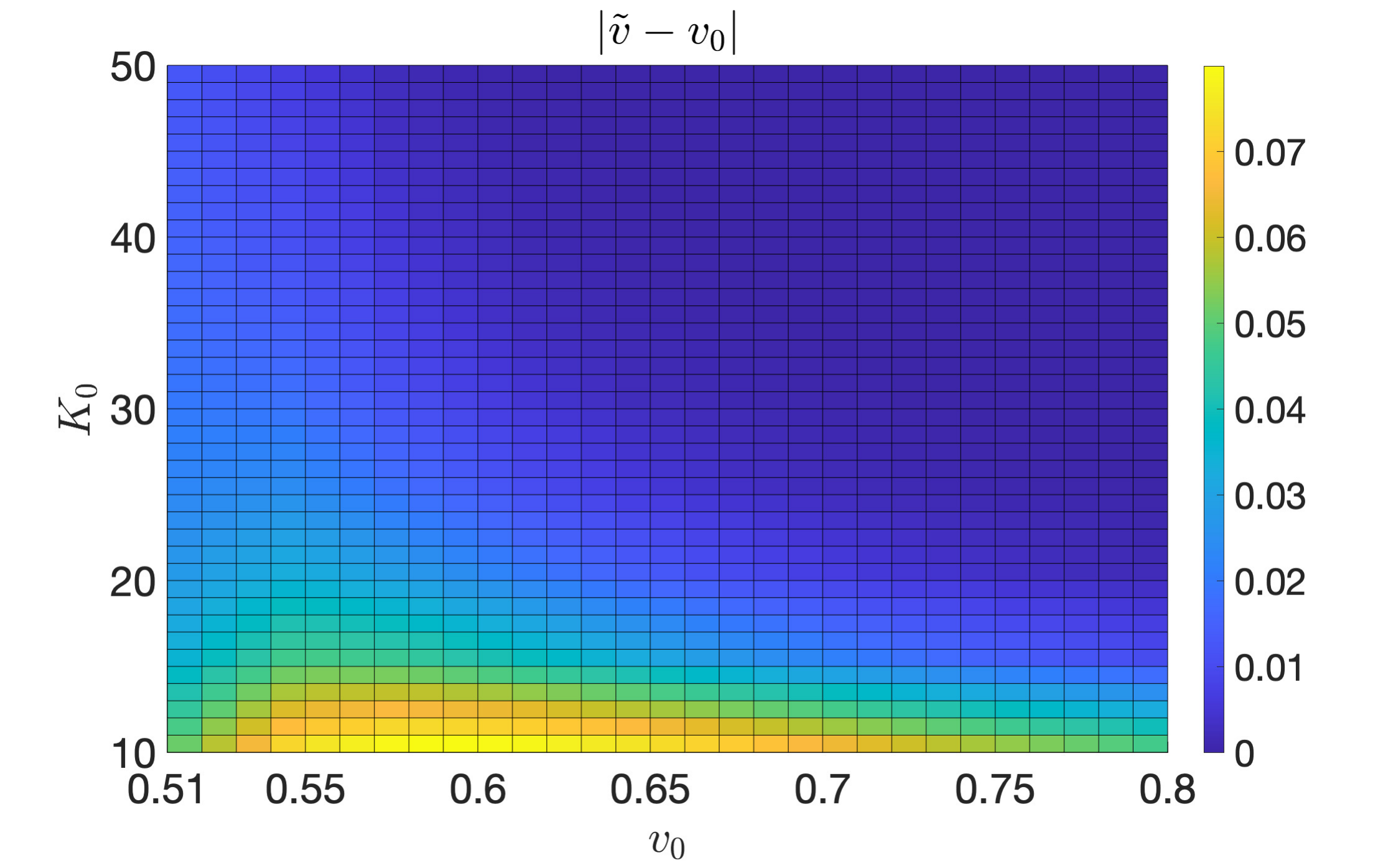}
\caption{\label{fig:diff_theta_thetatilde}
\blue{\textsf{Beverton-Holt binary splitting model}. Absolute differences between $\T_0$ and $\tilde{\T}$ for different combinations of the parameters $(K_0, v_0)$.}}
\end{figure}

This intuition can be confirmed using Proposition \ref{thm:Q-consistency-WLSE-1}, which effectively describes how differences in $m(z)$ and $m^\uparrow(z)$ translates into differences in $\hat{\T}_n$ and $\tilde{\T}_n$ for large $n$ (i.e. to differences in their respective conditional limits $\T_0$ and $\tilde{\T}$). In addition, we can use Proposition \ref{thm:Q-consistency-WLSE-1} to display differences in $\T_0$ and $\tilde{\T}$ for a wide range of combinations of the parameters $(K_0, v_0)$. In the left and right panels of Figure \ref{fig:diff_theta_thetatilde} we plot $|\tilde{K}-K_0|$ and $|\tilde{v}-v_0|$, respectively, for $(K_0,v_0)\in [10,50]\times [0.51,0.8]$, using weights $\{w_z^{(2)}\}$. We generally observe that if $K_0$ and $v_0$ are smaller, then $|\tilde{K}-K_0|$ and $|\tilde{v}-v_0|$ are larger (although not always). This agrees with our intuition because when $K_0$ and $v_0$ are small, the process spends longer at small population sizes, where the difference between $m(z)$ and $m^\uparrow(z)$ is greater. More surprisingly, Figure \ref{fig:diff_theta_thetatilde} also shows that $|\tilde{K}-K_0|$ is more sensitive with respect to $v_0$, whereas $|\tilde{v}-v_0|$ is more sensitive with respect to $K_0$.
}

\subsection{Unconditional trajectories}\label{subsec:conterfactual}

Here, we examine the quality of the estimators $\hat{\T}_n$ and $\tilde{\T}_n$ applied to trajectories simulated up to a given generation $n$, \emph{without imposing the condition $Z_n>0$}. 
%Our aim is to investigate the behaviour of the estimators in a setting that includes extinct trajectories.
Note that $\hat{\T}_n$ is designed to account for the bias introduced under the condition $Z_n>0$ (i.e. when extinct trajectories are excluded). Without the condition $Z_n>0$, we expect the classical estimator $\tilde{\T}_n$ to be less biased than the new estimator $\hat{\T}_n$ ---the reverse of what we generally observe in the numerical results in Section~\ref{empirical}. Our goal is to further illustrate the difference between the two estimators.

We simulate $1000$ trajectories in the growing phase of the Beverton-Holt model considered in Section \ref{growing_pop} (with $K=100$ and $v=0.6$) up to times $n=5,10, 15, 20, 25$; among these trajectories, respectively $365, 433, 442, 447, 490$ became extinct. Typically, as PSDBPs initially grow like supercritical branching processes, if extinction occurs before reaching the carrying capacity, it happens quite rapidly (for example, the mean and standard deviation of the extinction time for the trajectories generated up to $n=25$ are $4.6327$ and $3.5607$, respectively). In Figures \ref{cont_K} and \ref{cont_v}, we present boxplots of the estimates for $K$ and $v$ using $\hat{\T}_n$ and $\tilde{\T}_n$ with weighting function $\{w_z^{(2)}\}$, considering (i) all 1000 trajectories combined in the left panels, (ii) only the surviving trajectories (with $Z_n>0$) in the middle panels, and (iii) only the extinct trajectories (with $Z_n=0$) in the right panels.

As expected, the estimates obtained from the surviving trajectories exhibit similar qualitative properties as in Section \ref{growing_pop}, showing a smaller bias for the new estimator $\hat{\T}_n$. In contrast, the estimates derived from extinct trajectories are poor, due to the limited data available in these trajectories. This is particularly evident in the case of the estimates $\hat{K}_n$
and for the estimates $\tilde{v}_n$. Consequently, when combining all trajectories (including both surviving and extinct ones), the estimators have a greater bias
%results show a greater bias
than when considering only the non-extinct trajectories, with this being more pronounced for  $\hat{K}_n$.

In summary,
if the data consist of an extinct trajectory
there is no need to consider the new estimator $\hat{\T}_n$. However, in the case of a non-extinct trajectory (i.e. a trajectory generated under the condition $Z_n>0$), the results suggest that $\hat{\T}_n$ has a lower bias.

\begin{figure}
\centering\includegraphics[width=\textwidth]{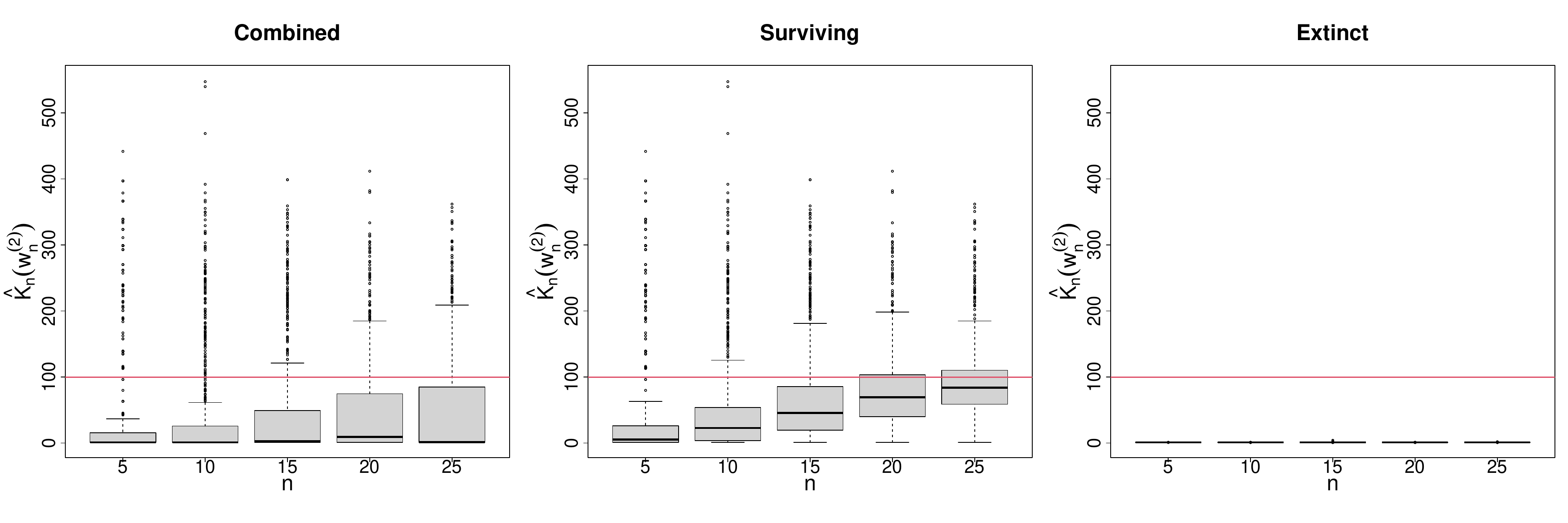}
\centering\includegraphics[width=\textwidth]{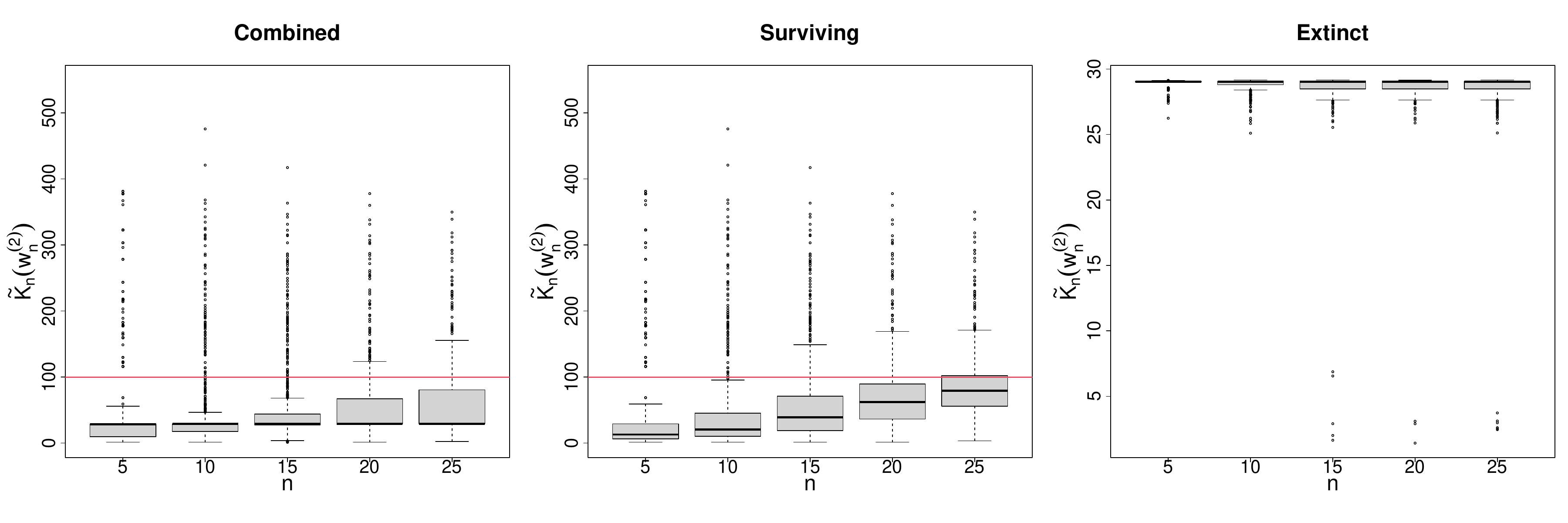}
\caption{\label{cont_K}
\blue{\textsf{Beverton-Holt model with $(K_0, v_0) =(100,0.6)$}. Box plots of $\widehat{K}_n$ and $\widetilde{K}_n$ with the weighting function $\boldsymbol{w}_{n}^{(2)}$. Red lines represent the true value of each parameter. Left panels: combined trajectories; Middle panels: surviving trajectories; Right panels: extinct trajectories.}}
\end{figure}

\begin{figure}
\centering\includegraphics[width=\textwidth]{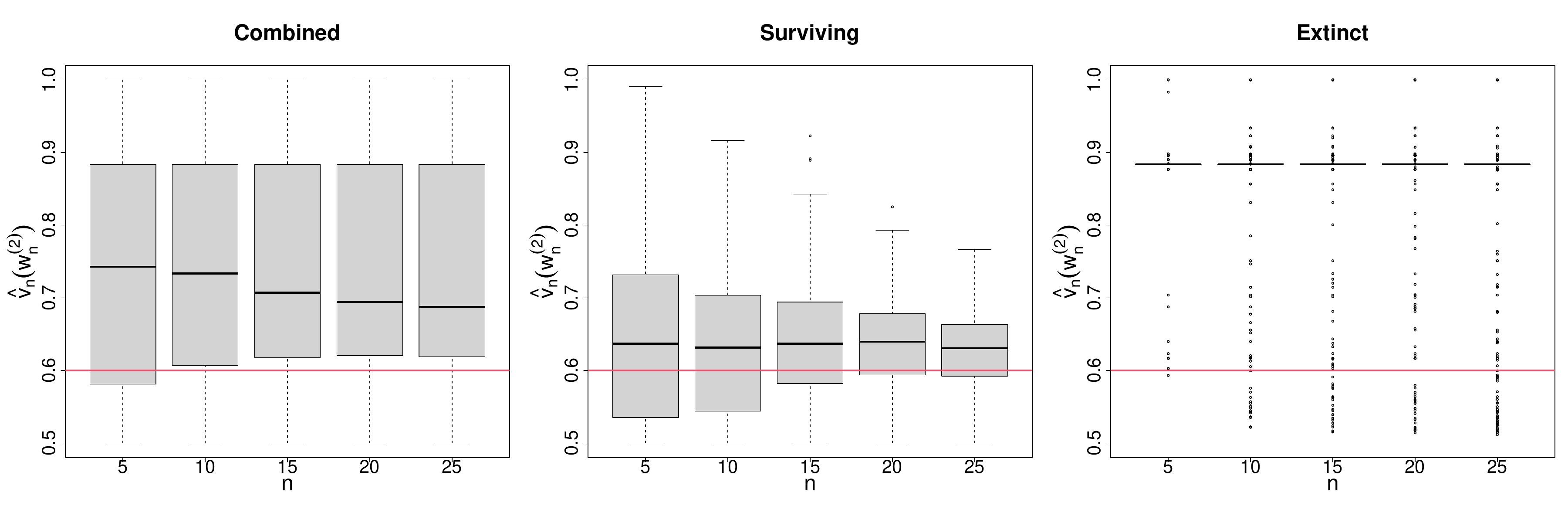}
\centering\includegraphics[width=\textwidth]{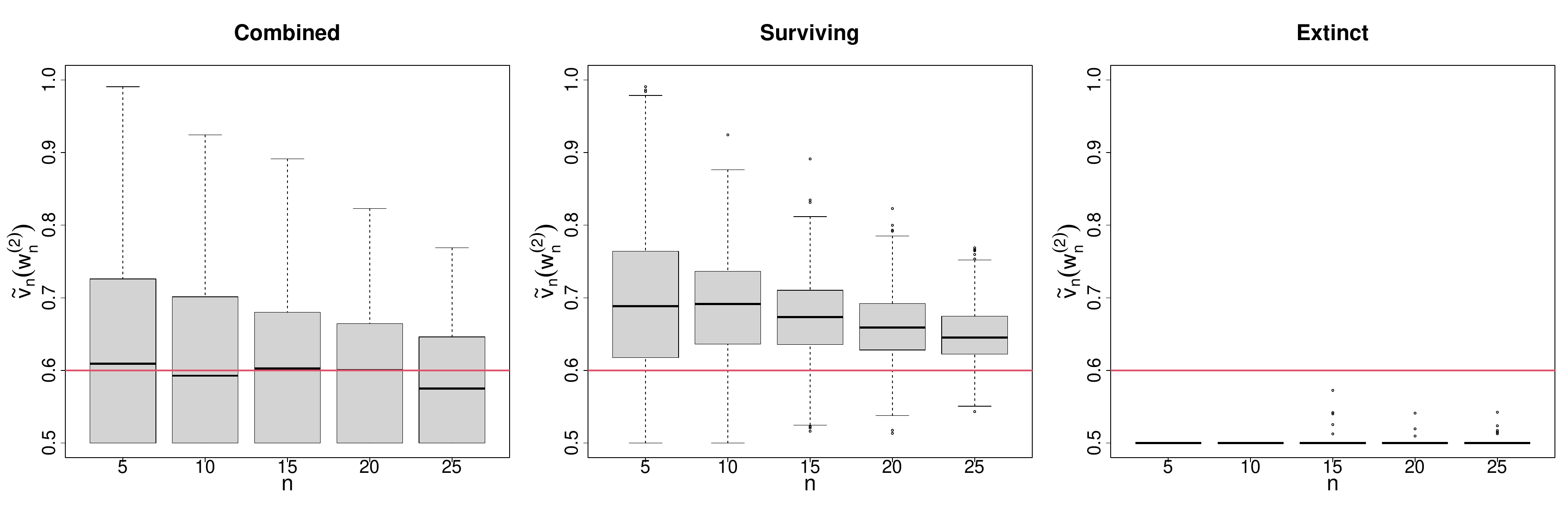}
\caption{\label{cont_v}\blue{\textsf{Beverton-Holt model with $(K_0, v_0) =(100,0.6)$}. Box plots of $\widehat{v}_n$ and $\widetilde{v}_n$ with the weighting function $\boldsymbol{w}_{n}^{(2)}$. Red lines represent the true value of each parameter. Left panels: combined trajectories; Middle panels: surviving trajectories; Right panels: extinct trajectories.}
}
\end{figure}

\subsection{Bias correction}\label{subsec:bias-correction}

While we have proved that  $\widehat{\T}_n$ is asymptotically unbiased, this does not necessarily mean that it is unbiased for small $n$. Indeed, 
the numerical examples in Section \ref{empirical} show that the estimators $\widehat{\T}_n$ and $\widetilde{\T}_n$ are biased for fixed values of $n$; this is especially apparent for small values of $n$ in the growing phase. We therefore investigate the effect of bias correction on $\widehat{\T}_n$ and $\widetilde{\T}_n$ by applying the bootstrap bias correction method (see for example \cite[Section 10.6]{tibshirani1993introduction}) with 1000 trajectories (leading to 1000 point estimates for each estimator), and 100 bootstrap trajectories per point estimate (used to estimated the bias). More specifically,  the bias-corrected estimator corresponding to $\widehat{\T}_n$ is defined as
$$\bar{\T}_n=\widehat{\T}_n-\left(\frac{1}{B}\sum_{i=1}^B \T^{(B)}_{n,i} - \widehat{\T}_n\right),$$ where $B$ is the number of bootstrap samples ($B=100$ here), and $\T^{(B)}_{n,i}$ is the point estimate resulting from the $i$th bootstrap sample from the model with parameter $\widehat{\T}_n$; a similar definition applies to $\widetilde{\T}_n$.

The results are presented in Figure \ref{bias_corr} for the Beverton-Holt model considered in Section~\ref{growing_pop}, for times $n=15, 25, 35$. The graphs indicate that the bias improvements, while more noticeable for $v$ than for $K$, are not substantial. One possible explanation for the bias correction method's relatively poorer performance for $K$ could be the over-estimation of the bias due to the skewed distribution of the estimators $\widehat{\T}_n$ and $\widetilde{\T}_n$.
%[compare true bias with estimated bias]. 
We also note that the bias corrected estimators generally have a larger variance. In addition, since $B$ additional point estimates are required for each population trajectory, the bias-corrected estimator is more computationally expensive. Another limitation is that the bias-corrected estimators may produce values outside the parameter space, as is the case for $v$ (whose values should be in $(0.5,1]$).
Finally, as opposed to the new estimators $\widehat{\T}_n$, the bias corrected estimators are not easily interpretable and their consistency is not guaranteed.

\begin{figure}
\centering\includegraphics[width=0.4\textwidth]{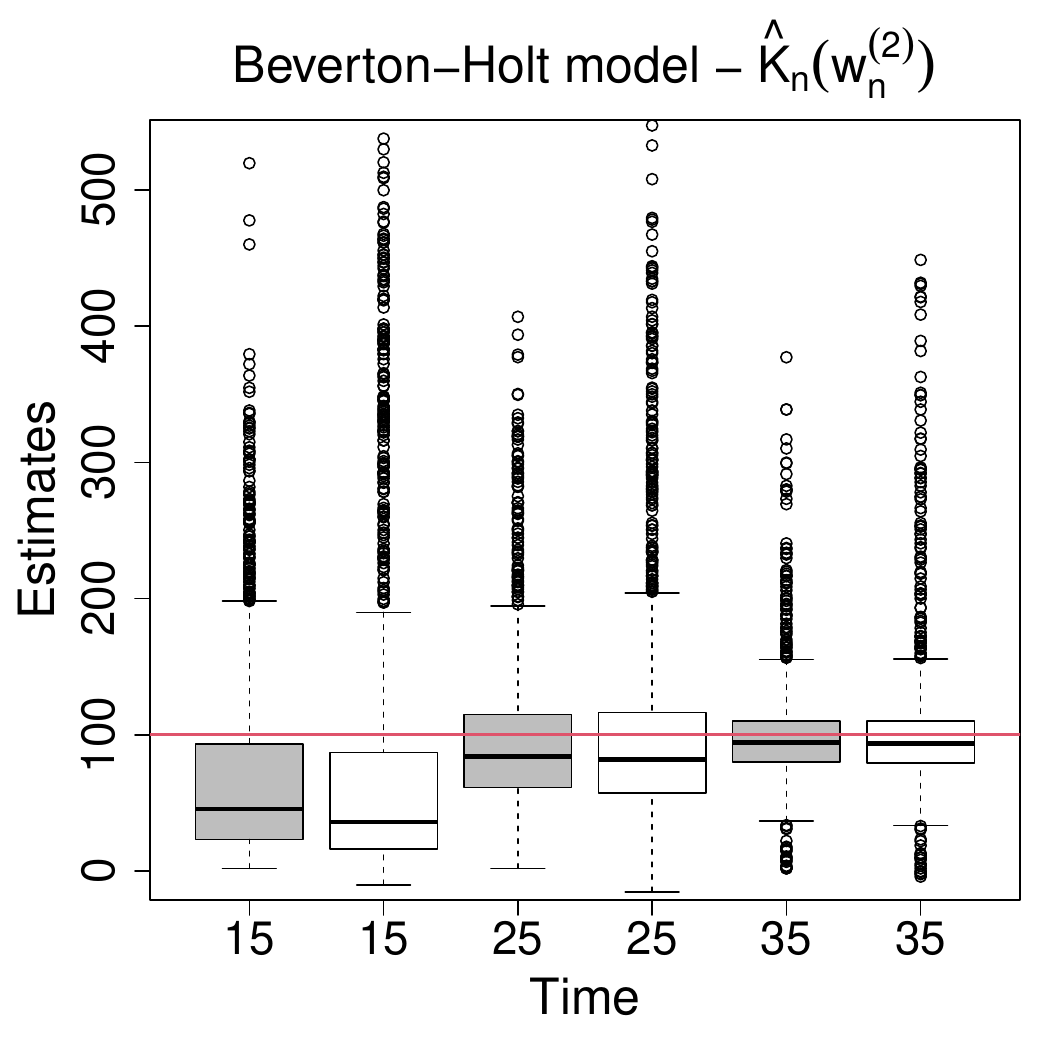}
\centering\includegraphics[width=0.4\textwidth]{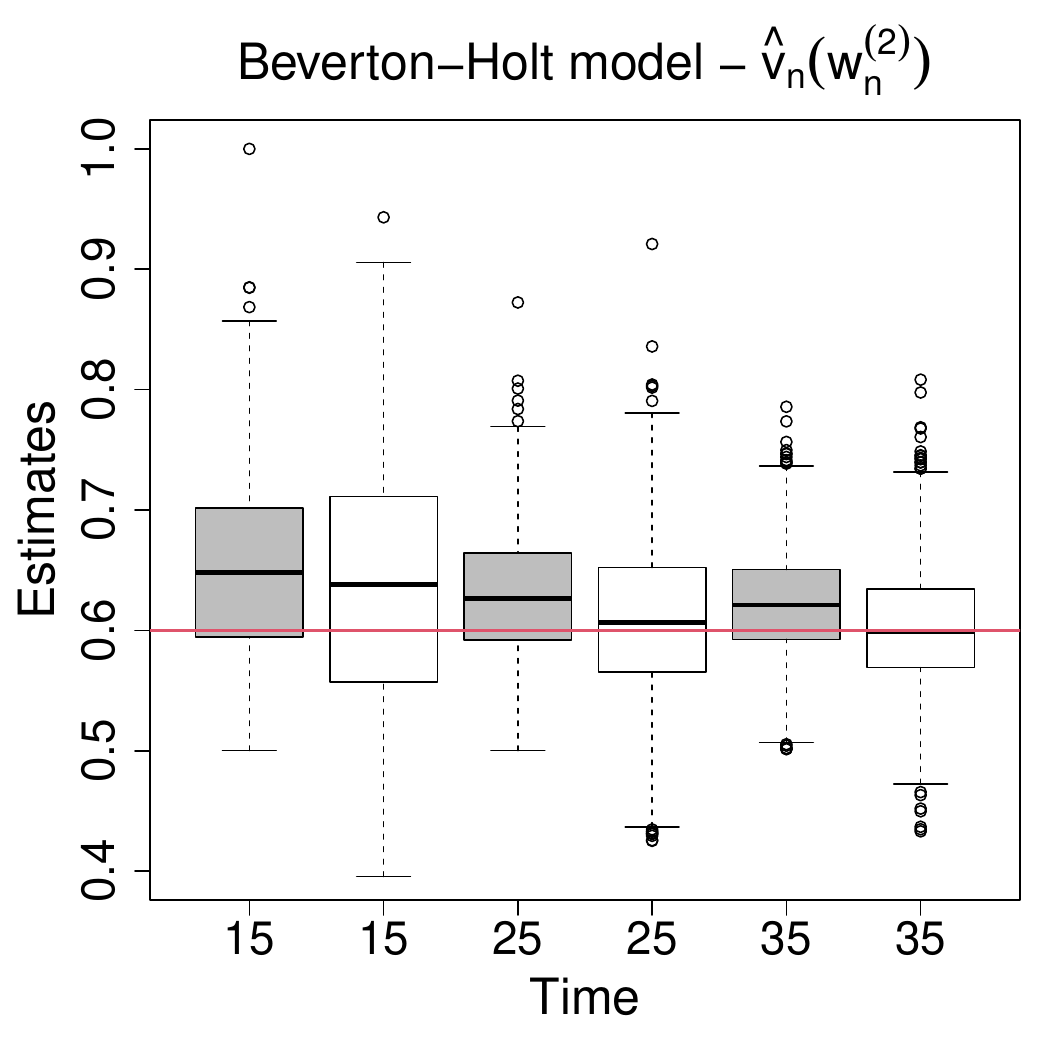}\\
\centering\includegraphics[width=0.4\textwidth]{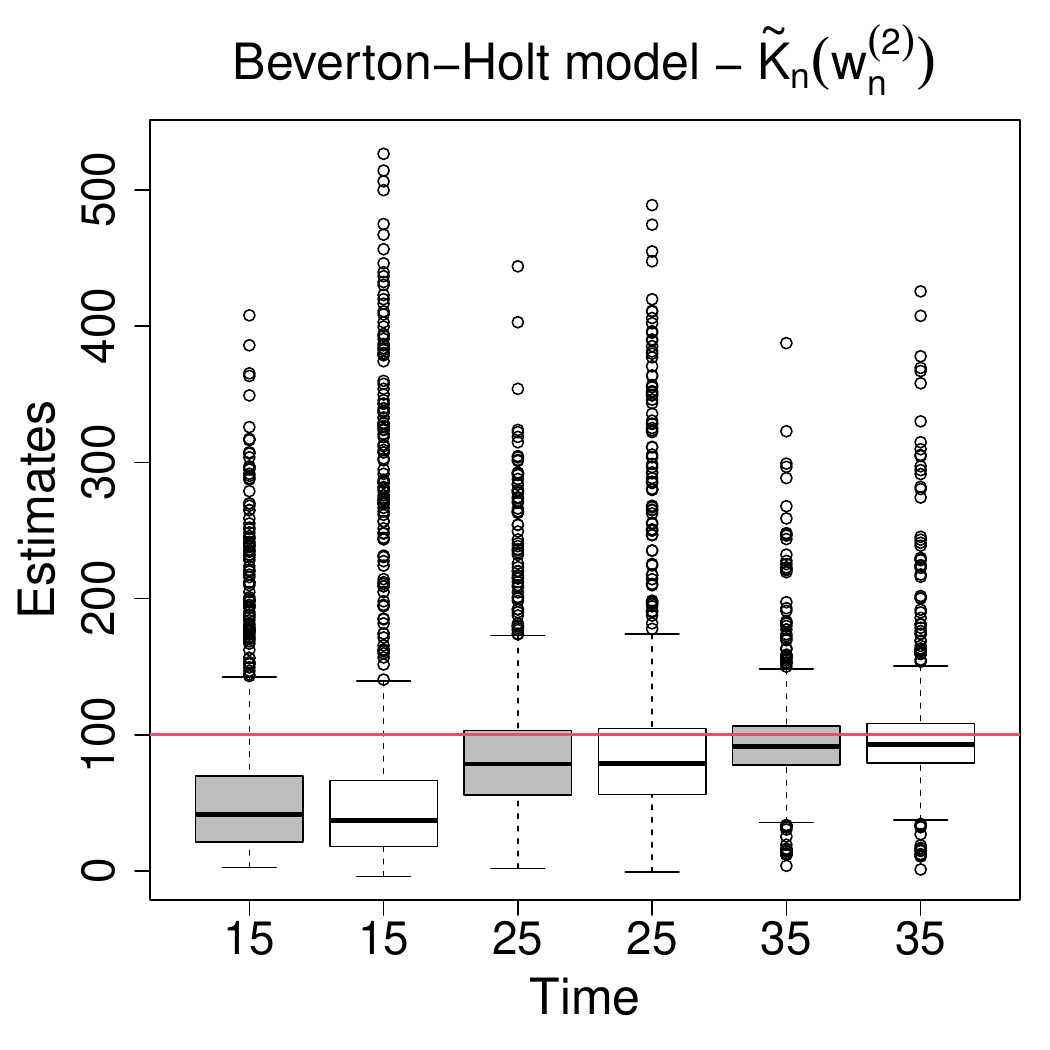}
\centering\includegraphics[width=0.4\textwidth]{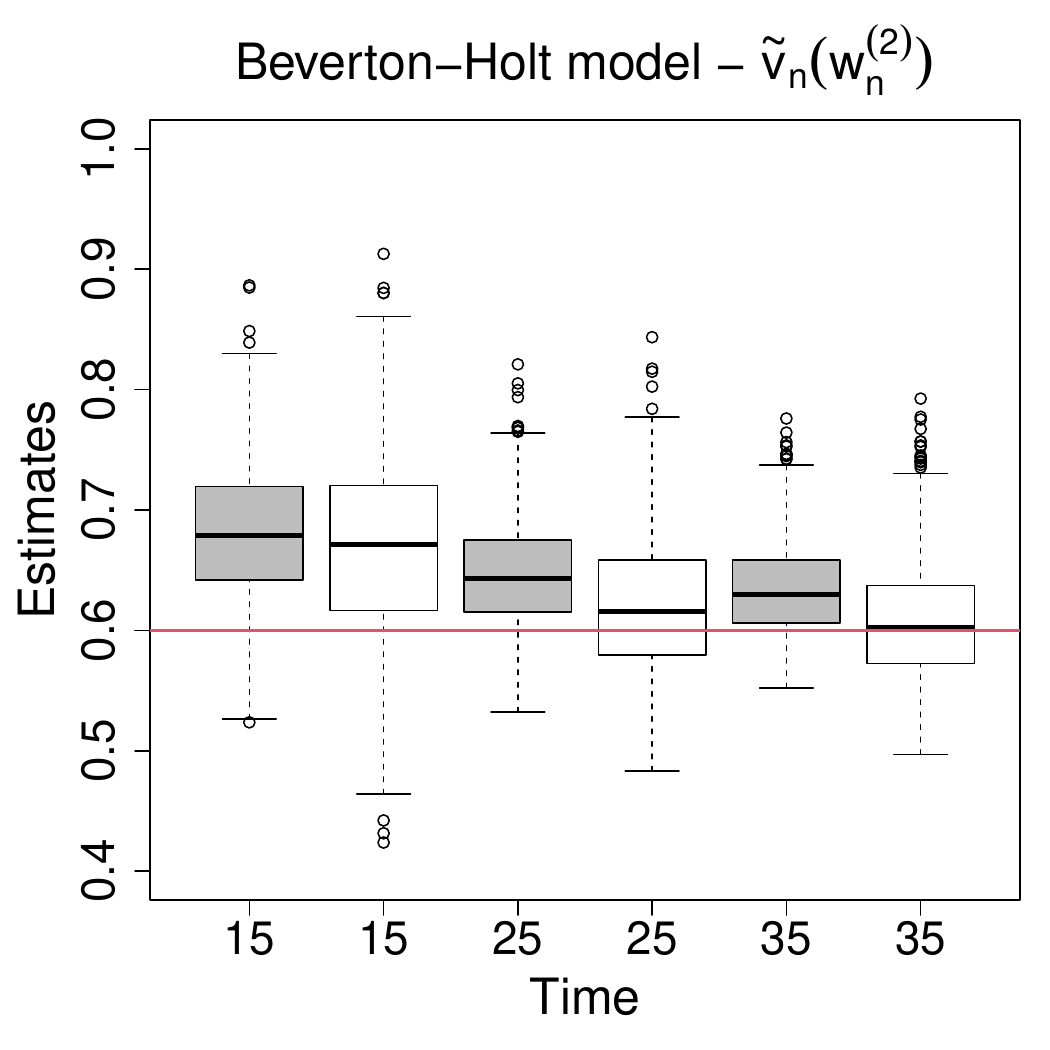}
\caption{\label{bias_corr}\blue{\textsf{Beverton-Holt model with $(K_0, v_0) =(100,0.6)$}. Box plots of $\widehat{\T}_n$ and $\widetilde{\T}_n$ (grey) and their bias corrected version (white) with the weighting function $\boldsymbol{w}_{n}^{(2)}$. Red lines represent the true value of each parameter.
}}
\end{figure}

}

\blue{

\subsection{Coverage properties and efficiency}\label{sec:cov}

In Section~\ref{qsp} we investigated the distribution of the new estimator $\widehat{\T}_n$ for large $n$, and compared it to the asymptotic distribution
 implied by Theorem~\ref{thm:C-normality-WLSE-1}. We now continue this analysis by computing
 empirical confidence regions for the parameter $\T_0$
 and comparing them to the theoretical confidence regions implied by Theorem~\ref{thm:C-normality-WLSE-1}.
 To compute the theoretical confidence regions,  
 we use the Mahalanobis distance with the covariance matrix $\vc \beta(\hat{\T}_n^*)/n$, where $\hat{\T}_n^*$ is the empirical mean of the estimates $\hat{\T}_n$. To compute the empirical confidence regions we use the function \texttt{ci2d()} of the R package \texttt{gplots}.

The left panels of Figures~\ref{fig:conf-regions-non-bannana} and \ref{fig:conf-regions-bannana} display the 90\% and 95\% empirical and theoretical confidence regions based on the estimator $\widehat{\T}_n$ for trajectories of populations alive after $n=5000$ time units when $(K_0,v_0)=(25,0.7)$ and $(25,0.53)$. In these plots, we observe that the empirical and theoretical confidence regions are in close correspondence, although this is clearer when $(K_0,v_0)=(25,0.7)$ than when $(K_0,v_0)=(25,0.53)$. Next we investigate the coverage properties of these confidence regions. We do so by simulating independent non-extinct trajectories of the process, computing $\widehat{\T}_n$ and the corresponding confidence region implied by Theorem~\ref{thm:C-normality-WLSE-1} for each trajectory, and finally computing the proportion of these confidence regions that contain the true parameter $\T_0$.
The results are gathered in Table~\ref{tab:coverage}, where we observe that for large values of $n$ the corresponding proportions when $\T_0=(25,0.7)$ are around 95\%, as expected. The results for $\T_0 =(25,0.53)$ seem to be less satisfactory, however, this is due to the fact that the value $v_0=0.53$ is closer to the boundary of the parameter space $\Theta$ (recall that $v$ takes its values in $(0.5,1]$), and therefore the empirical distribution is not as close to the theoretical distribution given by Theorem~\ref{thm:C-normality-WLSE-1}.

The right panels of Figures 6 and 7 display the $90\%$ and $95\%$ empirical confidence regions based on the estimator $\widetilde{\T}_n$. Note that there are no corresponding theoretical confidence regions in this case.

\begin{figure}
\centering\includegraphics[width=0.3\textwidth]{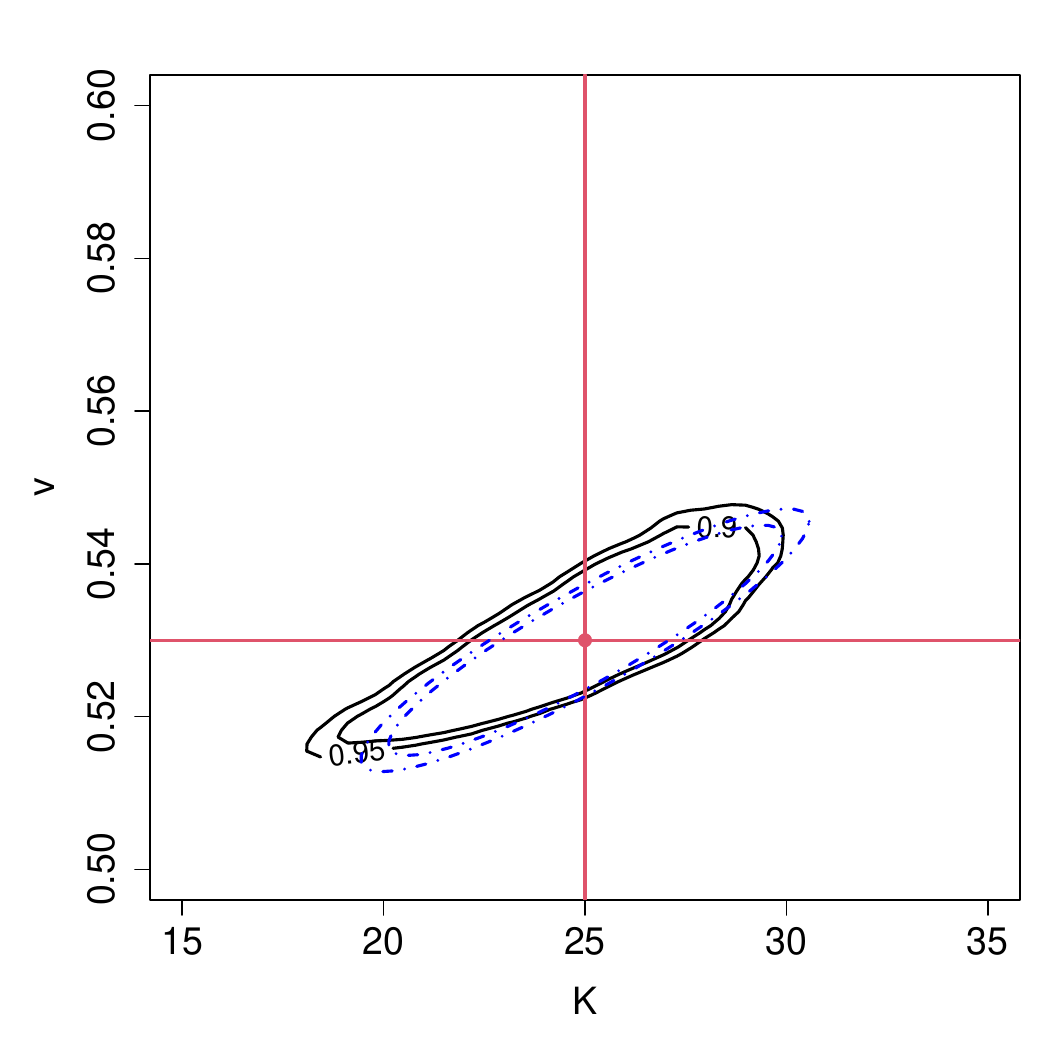}\hspace{1em}
\includegraphics[width=0.3\textwidth]{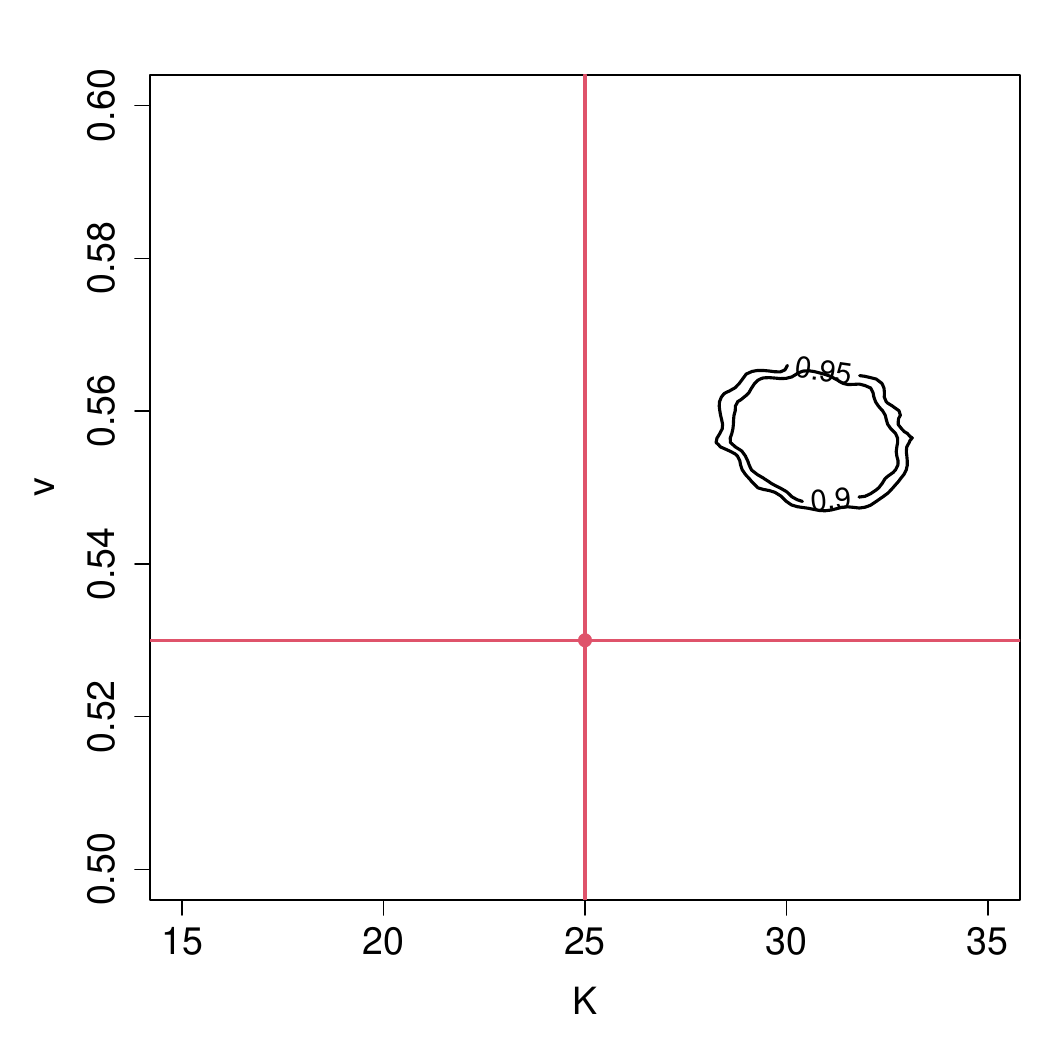}
\caption{\blue{\textsf{Case $(K_0,v_0)=(25,0.7)$}. Empirical confidence regions (black solid line) and theoretical confidence regions (dashed-dotted blue lines) for levels $90\%,95\%$.  Left: based on the estimator $\hat{\T}_n(w_n^{(2)})$. Right: based on the estimator $\tilde{\T}_n(w_n^{(2)})$.}\label{fig:conf-regions-non-bannana}}
\end{figure}

\begin{figure}
\centering\includegraphics[width=0.3\textwidth]{jointCI_two_estim1_bh.pdf}\hspace{1em}
\includegraphics[width=0.3\textwidth]{jointCI_two_estim2_bh.pdf}
\caption{\blue{\textsf{Case $(K_0,v_0)=(25,0.53)$}. Empirical confidence regions (black solid line) and theoretical confidence regions (dashed-dotted blue lines) for levels $90\%,95\%$.  Left: based on the estimator $\hat{\T}_n(w_n^{(2)})$. Right: based on the estimator $\tilde{\T}_n(w_n^{(2)})$.}\label{fig:conf-regions-bannana}}
\end{figure}

\begin{table} \centering%%%%%%% HOTELLING
\scalebox{0.75}{\blue{\begin{tabular}{c|cc|cc|}
\cline{2-5}
&\multicolumn{2}{c|}{$\T_0 =(25,0.7)$}&\multicolumn{2}{c|}{$\T_0 =(25,0.53)$}\\[0.5ex]
\cline{2-5}
\multicolumn{1}{c|}{} & $\hat{\T}_n(\boldsymbol{w}_{n}^{(1)})$ & $\hat{\T}_n(\boldsymbol{w}_{n}^{(2)})$ & $\hat{\T}_n(\boldsymbol{w}_{n}^{(1)})$ & $\hat{\T}_n(\boldsymbol{w}_{n}^{(2)})$ \\[0.5ex]
\hline
\multicolumn{1}{|c|}{$n=1000$} & $0.952$ & $0.948$ & $0.788$ & $0.855$ \\
\multicolumn{1}{|c|}{$n=2000$} & $0.948$ & $0.949$ & $0.837$ & $0.898$ \\ 
\multicolumn{1}{|c|}{$n=3000$} & $0.950$ & $0.954$ & $0.860$ & $0.912$ \\
\multicolumn{1}{|c|}{$n=4000$} & $0.947$ & $0.955$ & $0.866$ & $0.915$ \\
\multicolumn{1}{|c|}{$n=5000$} & $0.952$ & $0.957$ & $0.892$ & $0.924$ \\
\hline
\end{tabular}}}
  \caption{\blue{\textsf{Beverton-Holt model with \blue{$\T_0 =(25,0.7)$} and \blue{$\T_0 =(25,0.53)$}.} Proportion of the 95\% confidence regions for each trajectory containing the true value $\T_0$.}} \label{tab:coverage}
\end{table}

\bigskip

The efficiency of the new estimator $\widehat{\T}_n$ can be investigated by computing the empirical covariance matrix and comparing it with the Cram\'er-Rao lower bound. This analysis is quite involved and is left as a topic for future research.

\subsection{Bootstrap confidence intervals}\label{subsec:boot_CI}

Theorem \ref{thm:C-normality-WLSE-1} provides the asymptotic distribution of $\hat{\vc\theta}_n$, which can be used to derive confidence intervals for the model parameters. However, in non-asymptotic settings such as for the growing populations considered in Section \ref{growing_pop} and the black robin application in Section \ref{sec:br}, the confidence intervals obtained using Theorem \ref{thm:C-normality-WLSE-1} may be misleading. In these cases, parametric bootstrap approaches can be used to obtain empirical distributions of estimators and confidence intervals.

	In our setting (and assuming a single parameter w.l.o.g.), the general idea is to first estimate the model parameter using the (single) observed population trajectory, yielding the estimate $\hat{\theta}$. Then, we generate $M$ bootstrap samples by simulating new population trajectories from the fitted model (i.e., the model with parameter $\hat{\theta}$). For each bootstrap sample, we re-estimate the parameter, resulting in $M$ (ordered) bootstrap estimates $\hat{\theta}^*_{(1)}, \ldots, \hat{\theta}^*_{(M)}$. 
	%Once the bootstrap distribution of these estimates is obtained, 
	Confidence intervals can be constructed by taking percentiles from the empirical distribution of the bootstrap estimates, such as the 2.5th and 97.5th percentiles for a $95\%$ \textit{percentile} confidence interval:
	$$L=\hat{\theta}^*_{(0.025)},\qquad U= \hat{\theta}^*_{(0.975)};$$ see for instance  (\cite[Section	13.3]{tibshirani1993introduction}).
	In order to account for the difference between the true parameter $\theta$ and the estimate $\hat{\theta}$, the \textit{pivot-based} confidence interval (a.k.a. \textit{bootstrap-t} interval  \cite[Section	13.4]{tibshirani1993introduction}) assumes that the behaviour of $\theta-\hat{\theta}$ is approximately the same as that of $\hat{\theta}-\hat{\theta}^*$. Therefore,
	\begin{eqnarray*} 
	0.95 &=& P[\hat{\theta}^*_{(0.025)}\leq \hat{\theta}^*\leq \hat{\theta}^*_{(0.975)}]
	\\
%	&=& P[\hat{\theta}^*_{(0.025)}-\hat{\theta}\leq \hat{\theta}^*-\hat{\theta}\leq \hat{\theta}^*_{(0.975)}-\hat{\theta}]\\
%	&=&P[\hat{\theta}-\hat{\theta}^*_{(0.025)}\geq \hat{\theta}-\hat{\theta}^*\geq \hat{\theta}-\hat{\theta}^*_{(0.975)}]\\
	&\approx &P[\hat{\theta}-\hat{\theta}^*_{(0.025)}\geq \theta-\hat{\theta}\geq \hat{\theta}-\hat{\theta}^*_{(0.975)}]\\
	&=& P[2\hat{\theta}-\hat{\theta}^*_{(0.025)}\geq \theta\geq 2\hat{\theta}-\hat{\theta}^*_{(0.975)}], 
	\end{eqnarray*}
	yielding the pivot confidence interval: $$L_p=2\hat{\theta}-\hat{\theta}^*_{(0.975)},\qquad U_p=2\hat{\theta}-\hat{\theta}^*_{(0.025)}.$$

	In order to obtain narrow confidence intervals with accurate coverage probabilities, we need the $M$ simulated population trajectories to be a ``good representation'' of the original data. However, for growing populations such as the black robins, it is not immediately clear what the best method is for simulating trajectories that are representative of
	the original data. 
	We explore three approaches to generate bootstrap samples: $(i)$ simulating (non-extinct) trajectories of the same length as the original data, $(ii)$ simulating trajectories with the same total cumulative number of individuals, and $(iii)$ simulating trajectories with the same final population size. 
	
	We compare these approaches by performing a bootstrap confidence interval analysis using the classical estimator $\tilde{\T}_{n}(\boldsymbol{w}_{n}^{(2)})$ for a Beverton-Holt binary splitting model with $(K_0,v_0)=(110,0.7)$, assuming the data consists of non-extinct trajectories of length $n=30$ (growing phase). The empirical results, based on 1000 model trajectories bootstrapped $M=1000$ times, are shown in Tables \ref{test_boot_Q_1}, \ref{test_boot_Q_2}, \ref{test_boot_Q_3}. 

	These results indicate that the coverage probabilities of the confidence interval using methods $(i)$ and $(iii)$ are better than those using $(ii)$, while bias (median of $(L+U)/2$) and precision (median of $U-L$) are much better for methods $(i)$ and $(ii)$ than for $(iii)$ (especially for $K$). This preliminary investigation suggests that simulating (non-extinct) bootstrap trajectories of the same length as the original data offers the best trade-off between coverage probability and bias/precision.

Finally we compute percentile confidence intervals and their pivot versions for the parameters $(K,v)$ in the Beverton-Holt model for the black robin population described in Section~\ref{sec:br}.
	The results are shown in Table  \ref{br_boot_Q_1} for the classical estimator $\tilde{\T}_{n}(\boldsymbol{w}_{n}^{(2)})$, and in Table  \ref{br_boot_C_1} for the $C$-consistent estimator $\hat{\T}_{n}(\boldsymbol{w}_{n}^{(2)})$.  The results are more consistent across the three simulation methods for $v$ than for $K$. This is because the black robin dataset is a relatively short trajectory, starting from a single individual. 
	Due to the random nature of the simulated trajectories, different constraints imposed by the three different resampling approaches can lead to different patterns in the trajectories, which generally impacts the estimates of $K$ more than those of $v$. For instance, in the third approach, we may generate more trajectories exhibiting exponential-like growth, resulting in very large estimates for $K$. This is particularly apparent for the classical estimator (Table~\ref{br_boot_Q_1}), as reflected in the high values for $U$ and the corresponding negative values for $L_p$, which we set to 0.

	%% Test quality bootstrap CIs
	
	\begin{table} \centering
\blue{\begin{tabular}{|cc|c|c|c|c|}
\cline{3-6}
\multicolumn{2}{c|}{ }&\multicolumn{2}{c|}{$K$} & \multicolumn{2}{c|}{$v$}\\
\hline
& & $K_0\in[L,U]$ & $K_0\in[L_p,U_p]$  & $v_0\in [L,U]$  & $v_0\in [L_p,U_p]$\\
\hline
\multirow{3}{*}{$(i)$} &
$85\%$    & $0.7970$ &$0.8070 $  & $0.7040$ & $0.8380$ \\ &
$90\%$    & $ 0.7720$ & $0.8650$ & $0.7554$ & $0.8830$ \\ &
$95\%$    & $0.8450$ & $0.9200$ & $0.7614$ & $0.9330$ 
\\
\hline
\multirow{3}{*}{$(ii)$} &
$85\%$    & $0.7960$ & $0.8010$ & $0.7030$ & $0.8340  $ \\ &
$90\%$    & $0.7750 $ & $0.8470$ & $0.7559$ & $0.8840$ \\ &
$95\%$    & $0.8360$ & $0.8990$ & $0.7624$ & $ 0.9280$ 
\\
\hline
\multirow{3}{*}{$(iii)$} &
$85\%$    & $0.9090$ & $0.8570$ & $0.8360$ & $0.8700$ \\ &
$90\%$    & $0.8850$ & $0.8860$ & $0.7452$ & $0.9080$ \\ &
$95\%$    & $0.9390$ & $ 0.9110$ & $0.7511$ & $0.9480$ 
\\
\hline
\end{tabular}}
  \caption{\blue{\textsf{Coverage probabilities of bootstrap CIs}. Empirical probabilities computed from 1000 trajectories of length $n=30$ of the Beverton-Holt binary splitting model with $(K_0,v_0)=(110,0.7)$, for each of which we generated $M=1000$ bootstrap samples, and used the classical estimator $\tilde{\T}_{n}(\boldsymbol{w}_{n}^{(2)})$.
   Three methods were used to generate bootstrap samples: $(i)$ simulating trajectories of the same length as the original data ($n=30$), $(ii)$ simulating trajectories with the same total cumulative number of individuals, and $(iii)$ simulating trajectories with the same final population size.
  \label{test_boot_Q_1}}}

\end{table}

	\begin{table} \centering
\blue{\begin{tabular}{|cc|c|c|c|c|}
\cline{3-6}
\multicolumn{2}{c|}{ }&\multicolumn{2}{c|}{$K$} & \multicolumn{2}{c|}{$v$}\\
\hline
& & Median of $\frac{L+U}{2}$ & Median of $\frac{L_p+U_p}{2}$ & Median of $\frac{L+U}{2}$ & Median of $\frac{L_p+U_p}{2}$\\
\hline
\multirow{3}{*}{$(i)$} &
$85\%$    & $109.0482$ &$109.6672 $  & $0.7519$ & $0.6959$ \\ &
$90\%$    & $109.4512$ & $109.0265$ & $0.7554$ & $0.6932$ \\ &
$95\%$    & $110.0038$ & $107.1984$ & $0.7614$ & $0.6874$ 
\\
\hline
\multirow{3}{*}{$(ii)$} &
$85\%$    & $109.6766$ & $109.0267$ & $0.7529$ & $0.6955 $ \\ &
$90\%$    & $109.9137$ & $108.4575$ & $0.7559$ & $0.6925$ \\ &
$95\%$    & $110.6507$ & $107.3913$ & $0.7624$ & $ 0.6859$ 
\\
\hline
\multirow{3}{*}{$(iii)$} &
$85\%$    & $137.0492$ & $82.0341$ & $0.7411$ & $0.7071$ \\ &
$90\%$    & $148.0171$ & $ 71.6185$ & $0.7452$ & $0.7036$ \\ &
$95\%$    & $178.2165$ & $41.7926$ & $0.7511$ & $0.6979$ 
\\
\hline
\end{tabular}}
  \caption{\blue{\textsf{Bias of bootstrap CIs}. Median of the CI's midpoints (to compare with the true parameter value $(K_0,v_0)=(110,0.7)$), computed from 1000 replicates of $M=1000$ bootstrap samples.
   Three methods were used to generate bootstrap samples: $(i)$ simulating trajectories of the same length as the original data ($n=30$), $(ii)$ simulating trajectories with the same total cumulative number of individuals, and $(iii)$ simulating trajectories with the same final population size.
  \label{test_boot_Q_2}}}

\end{table}

	\begin{table} \centering
\blue{\begin{tabular}{|cc|c|c|}
\cline{3-4}
\multicolumn{2}{c|}{ }
&  Median of $U-L$ for $K$ & Median of $U-L$ for $v$ \\
\hline
\multirow{3}{*}{$(i)$} &
$85\%$    & $25.8973$  & $0.1539 $  \\ &
$90\%$    & $29.9663$  & $0.1764$  \\ &
$95\%$    & $36.4705$ & $0.2117$ 
\\
\hline
\multirow{3}{*}{$(ii)$} &
$85\%$    & $25.2143$  & $0.1542$ \\ &
$90\%$    & $28.9566$  & $0.1769$  \\ &
$95\%$    & $34.7326 $  & $0.2120$ \\ 
\hline
\multirow{3}{*}{$(iii)$} &
$85\%$    & $78.6534$  & $0.1694$  \\ &
$90\%$    & $102.5739$  & $0.1942$  \\ &
$95\%$    & $166.6130$ & $ 0.2322$ 
\\
\hline
\end{tabular}}
  \caption{\blue{\textsf{Precision of bootstrap CIs}. Median of the CI's widths, computed from 1000 replicates of $M=1000$ bootstrap samples.
   Three methods were used to generate bootstrap samples: $(i)$ simulating trajectories of the same length as the original data ($n=30$), $(ii)$ simulating trajectories with the same total cumulative number of individuals, and $(iii)$ simulating trajectories with the same final population size.
  \label{test_boot_Q_3}}}

\end{table}

	%%REsults black robins
	
	\begin{table} \centering
\blue{\begin{tabular}{|cc|c|c|c|c|}
\cline{3-6}
\multicolumn{2}{c|}{ }&\multicolumn{2}{c|}{$K$} & \multicolumn{2}{c|}{$v$}\\
\hline
& & $[L,U]$ & $[L_p,U_p]$ & $[L,U]$ & $[L_p,U_p]$\\
\hline
\multirow{3}{*}{$(i)$} &
$85\%$    & $[78.18,  139.98]$ & $[74.71,  136.51]$ & $[0.60 ,   1]$ & $[0.43, 0.83]$ \\ &
$90\%$    & $[74.98,  149.88]$ & $[64.81,  139.71]$ & $[0.59 ,   1]$ & $[0.43, 0.84]$ \\ &
$95\%$    & $[69.30,  173.90]$ & $[40.79,  145.39]$ & $[0.56,1]$ & $[0.43,    0.87]$ 
\\
\hline
\multirow{3}{*}{$(ii)$} &
$85\%$    & $[74.83,  156.45]$ & $[58.24,  139.86]$ & $[0.60 ,   1]$ & $[0.43, 0.83]$ \\ &
$90\%$    & $[70.04,  173.44]$ & $[41.25,  144.65]$ & $[0.57 ,   1]$ & $[0.43, 0.85]$ \\ &
$95\%$    & $[63.77,  204.84]$ & $[9.85,  150.92]$ & $[0.55,1]$ & $[0.43,    0.88]$ 
\\
\hline
\multirow{3}{*}{$(iii)$} &
$85\%$    & $[82.25,  236.30]$ & $[58.24,  132.44]$ & $[0.57 ,   1]$ & $[0.43, 0.86]$ \\ &
$90\%$    & $[80.19,  297.50]$ & $[0^*,  134.51]$ & $[0.55 ,   1]$ & $[0.43, 0.87]$ \\ &
$95\%$    & $[76.43,  721.20]$ & $[0^*,  138.27]$ & $[0.52,1]$ & $[0.43,    0.90]$ 
\\
\hline
\end{tabular}}
  \caption{\blue{\textsf{Black robin population}.
  Bootstrap confidence intervals for the carrying capacity $K$ and the efficiency parameter $v$ using the classical estimator (rounded to the second decimal). The intervals were constructed from $M=1000$ bootstrap samples generated from the fitted Beverton-Holt model with parameters $\tilde{\T}_{n}(\boldsymbol{w}_{n}^{(2)}) = (107.3461, 0.7141)$. Three methods were used to generate bootstrap samples: $(i)$ simulating trajectories of the same length as the original data ($n=26$), $(ii)$ simulating trajectories with the same total cumulative number of individuals ($708$), and $(iii)$ simulating trajectories with the same final population size ($86$).
  \label{br_boot_Q_1}}}

\end{table}

%%%

	\begin{table} \centering
\blue{\begin{tabular}{|cc|c|c|c|c|}
\cline{3-6}
\multicolumn{2}{c|}{ }&\multicolumn{2}{c|}{$K$} & \multicolumn{2}{c|}{$v$}\\
\hline
& & $[L,U]$ & $[L_p,U_p]$ & $[L,U]$ & $[L_p,U_p]$\\
\hline
\multirow{3}{*}{$(i)$} &
$85\%$    & $[80.03,  150.45]$ & $[68.07,  138.49]$ & $[0.59 ,   1]$ & $[0.40, 0.81]$ \\ &
$90\%$    & $[76.22,  162.56]$ & $[55.95,  142.29]$ & $[0.57 ,   1]$ & $[0.40, 0.82]$ \\ &
$95\%$    & $[70.83,  183.47]$ & $[35.05,  147.68]$ & $[0.54,1]$ & $[0.40,    0.86]$ 
\\
\hline
\multirow{3}{*}{$(ii)$} &
$85\%$    & $[75.31,  173.89]$ & $[44.62,  143.20]$ & $[0.58 ,   1]$ & $[0.40, 0.82]$ \\ &
$90\%$    & $[71.40,  187.53]$ & $[30.99,  147.11]$ & $[0.57 ,   1]$ & $[0.40, 0.83]$ \\ &
$95\%$    & $[65.14,  205.18]$ & $[13.34,  153.37]$ & $[0.54,1]$ & $[0.40,    0.85]$ 
\\
\hline
\multirow{3}{*}{$(iii)$} &
$85\%$    & $[84.36,  187.13]$ & $[31.38,  134.16]$ & $[0.57 ,   1]$ & $[0.40, 0.83]$ \\ &
$90\%$    & $[82.87,  193.86]$ & $[24.66,  135.65]$ & $[0.55 ,   1]$ & $[0.40, 0.85]$ \\ &
$95\%$    & $[78.81,  204.49]$ & $[14.03,  139.70]$ & $[0.53,1]$ & $[0.40,    0.86]$ 
\\
\hline
\end{tabular}}
  \caption{\blue{\textsf{Black robin population}.
  Bootstrap confidence intervals for the carrying capacity $K$ and the efficiency parameter $v$ using the $C$-consistent estimator (rounded to the second decimal). The intervals were constructed from $M=1000$ bootstrap samples generated from the fitted Beverton-Holt model with parameters $\hat{\T}_{n}(\boldsymbol{w}_{n}^{(2)}) = (109.2578, 0.6989)$. Three methods were used to generate bootstrap samples: $(i)$ simulating trajectories of the same length as the original data ($n=26$), $(ii)$ simulating trajectories with the same total cumulative number of individuals ($708$), and $(iii)$ simulating trajectories with the same final population size ($86$).
  \label{br_boot_C_1}}}

\end{table}

}

\newpage

%%%%%%%%%%%%%%%%%%%%%%%%% REFERENCES %%%%%%%%%%%%%%%%%%%%%%%%%%%%%%%%
%\addtocontents{toc}{\protect\setcounter{tocdepth}{1}}
\addcontentsline{toc}{section}{References}
%\bibliographystyle{plain}
%\bibliography{bibtes_tesis}

%%%%%%%%%%%%%%%%%%%%%%%%%%%%%%%%%%%%%%%%%%%%%%%%%%%%%%%%%%%%%%%%%%%%%

\end{document}